\newcolumntype{C}{>{$}c<{$}} 
\tikzset{cross/.style={cross out, draw=black, fill=none, minimum size=2*(#1-\pgflinewidth), inner sep=0pt, outer sep=0pt}, cross/.default={2pt}}
\theoremstyle{theorem}
\newtheorem{theorem}{Theorem}[section]
\newtheorem{lemma}[theorem]{Lemma}
\newtheorem{proposition}[theorem]{Proposition}
\newtheorem{corollary}[theorem]{Corollary}
\theoremstyle{definition}
\theoremstyle{definition}
\theoremstyle{definition}
\theoremstyle{definition}
\theoremstyle{definition}
\theoremstyle{definition}
\theoremstyle{definition}
\theoremstyle{definition}
\newtheoremstyle{named}{}{}{\itshape}{}{\bfseries}{.}{.5em}{\thmnote{#3 }#1}
\theoremstyle{named}
\newtheorem*{namedtheorem}{Theorem}
\newcommand{\ZZ}{\mathbb{Z}}
\newcommand{\RR}{\mathbb{R}}
\newcommand{\QQ}{\mathbb{Q}}
\newcommand{\CC}{\mathbb{C}}
\newcommand{\NN}{\mathbb{N}}
\newcommand{\TT}{\mathbb{T}}
\newcommand{\EE}{\mathbb{E}}
\newcommand{\Prim}{\hbox{\sf RP}}
\newcommand{\STS}{\hbox{\sf STS}}
\newcommand{\PTS}{\hbox{\sf PTS}}
\newcommand{\calH}{\mathcal{H}}
\newcommand{\Hol}{\hbox{\sf Hol}}
\newcommand{\Aut}{\hbox{\rm Aut}}
\newcommand{\lcm}{\hbox{\rm lcm}}
\newcommand{\Var}{\hbox{\rm Var}}
\newcommand{\Tr}{\hbox{\rm Tr}}
\newcommand{\Irr}{\hbox{\rm Irr}}
\newcommand{\Par}{\hbox{\rm Par}}
\newcommand{\CF}{\hbox{\rm CF}}
\newcommand{\ch}{\hbox{\rm ch}}
\newcommand{\cyc}{\hbox{\rm cyc}}
\newcommand{\cont}{\hbox{\rm cont}}
\newcommand{\SL}{\mathrm{SL}}
\DeclarePairedDelimiter{\ideal}{\langle}{\rangle}
\DeclarePairedDelimiter\floor{\lfloor}{\rfloor}
\DeclarePairedDelimiter\ceil{\lceil}{\rceil}
\begin{document}
\title{The topology and geometry of random square-tiled surfaces}

\author{Sunrose Shrestha}
\address{Department of Mathematics, Tufts University, 503 Boston Avenue, Medford, MA 02155}
\email{sunrose.shrestha@gmail.com}




\begin{abstract}A square-tiled surface (STS) is a branched cover of the standard square torus with branching over exactly one point. In this paper we consider a randomizing model for STSs and generalizations to branched covers of other simple translation surfaces which we call polygon-tiled surfaces. We obtain a local central limit theorem for the genus and subsequently obtain that the distribution of the genus is asymptotically normal. We also study holonomy vectors (Euclidean displacement vectors between cone points) on a random STS. We show that asymptotically almost surely the set of holonomy vectors of a random STS contains the set of primitive vectors of $\ZZ^2$ and with probability approaching $1/e$, these sets are equal.
\end{abstract} 
\maketitle

In this paper we will study topological and geometric statistics of a specific kind of \emph{translation surfaces} called \emph{square-tiled surfaces}. Translation surfaces form an important class of metrics on two-manifolds, namely those that admit an atlas whose 
transition functions are given by Euclidean translations.  They can be viewed from several other, equivalent perspectives:
\begin{itemize}
\item complex analysis:  a translation surface is a pair $(X,\omega)$ where $X$ is a Riemann surface and
$\omega$ is a holomorphic differential
(i.e. away from finitely many singular points, $\omega$  is the pullback of the one-form $dz$);
\item Euclidean geometry:  a translation surface is a collection of Euclidean polygons with sides glued in parallel pairs by translations.
\end{itemize}
Translation surfaces are flat everywhere except for finitely many \emph{cone} points where the angles are $2\pi (\alpha_i+1)$ for $\alpha_i \geq 1$. The number and angles of cone points is recorded as $\alpha = (\alpha_1, \dots, \alpha_s)$, and the collection of surfaces sharing the same cone point data is called a \emph{stratum}, denoted $\calH(\alpha)$. For instance, the surface in Figure \ref{fig:sqtiled} lives in $\calH(1,1)$, as it has two cone points of angle $4 \pi$ each. For any cone point data $\alpha$, the stratum $\calH(\alpha)$ has an orbifold structure with local coordinates given by \emph{holonomy vectors} (Euclidean displacement vectors between cone points.)

\begin{wrapfigure}{r}{0.4\textwidth}
\vspace{-0.2cm}
\centering
\begin{tikzpicture}
    \draw (0,0) -- (4,0); 
    \draw (0,1) -- (4,1);
    \draw (0,2) -- (2,2);
    \draw (0,0) -- (0,2);
    \draw (1,0) -- (1,2);
    \draw (2,0) -- (2,2);
    \draw (3,0) --(3,1);
    \draw (4,0) -- (4,1);
    
    \node at (2.5, 0) [below=.075cm] {$a$};
     \node at (3.5, 1) [above] {$a$};
     \node at (2.5, 1) [above] {$b$};
     \node at (3.5, 0) [below=0cm] {$b$};
     
     \foreach \i in {1,...,4}
{ \node at (\i-0.5, 0.5)  {\i};
}

    \foreach \i in {5,6}
{ \node at (\i-4.5, 1.5)  {\i};
}

     \draw [fill] (0,0) circle [radius=0.05];
     \draw [fill] (0,2) circle [radius=0.05];    
      \draw [fill] (2,0) circle [radius=0.05];
     \draw [fill] (2,2) circle [radius=0.05];   
      \draw [fill] (3,1) circle [radius=0.05];   
       \draw [fill] (4,0) circle [radius=0.05];

        \draw (0,1) node[cross=3] {};
        \draw (2,1) node[cross=3] {};
        \draw (3,0) node[cross=3] {};
        \draw (4,1) node[cross=3] {};

\end{tikzpicture}
\caption{A square-tiled surface $S \in \STS_6$: sides are glued in opposite pairs unless otherwise indicated by the letters $a$ and $b$. It is represented by the permutations $\sigma = (1,2,3,4)(5,6)$ and $\tau = (1,5)(2,6)(3,4)$ which describe the horizontal and vertical gluings respectively. Corners indicated by a dot are identified together and corners indicated by a cross are identified together. }
\label{fig:sqtiled}
\vspace{-0.1cm}
\end{wrapfigure}
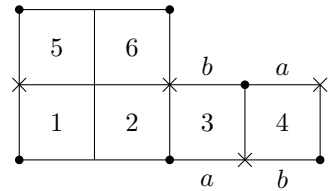

Translation surfaces have been widely studied and are useful tools in related areas of study such as Teichm\"uller dynamics, moduli problems on Riemann surfaces, and mathematical billiards. Every stratum of translation surfaces admits an action of $\SL_2(\RR)$. The orbits of translation surfaces under the diagonal subgroup of $\SL_2(\RR)$ give rise to geodesics in Teichm\"uller space under the Teichm\"uller metric. The stabilizer of the $\SL_2(\RR)$ action is called its {\em Veech group} (or group of affine homeomorphisms) and it encodes the symmetries of a translation surface.

A square-tiled surface (or STS for short) is a translation surface $(X, \omega)$ in which the Riemann surface $X$ covers the square torus $\TT=\CC/\ZZ[i]$, branched over only one point; equivalently, the Euclidean polygons can be taken to be unit squares. Let $\STS_n$ denote the collection of STSs with $n$ squares.
 
STSs are an active area of research for numerous reasons. First, they are easily described examples of celebrated translation surfaces known as \emph{Veech surfaces} which are characterized by having Veech group a lattice in $\SL_2(\RR)$. Veech surfaces are known to have very strong geometric and dynamical properties (\cite{ForTangTao,SmiWeiss,Veech3}). For example, Veech surfaces have so-called ``optimal dynamics": in any direction on the surface, each infinite trajectory is either periodic or dense. The $\SL_2(\RR)$-orbits of Veech surfaces also project to algebraic curves in the moduli space of Riemann surfaces. Secondly, their square structure means that their holonomy vectors lie in $\ZZ^2$ giving the collection of STSs a lattice-like structure in the stratum.
Under the well-known Masur-Veech volume form, STSs can be used to compute the volume of $\calH_1(\alpha)$ (the set unit area surfaces of $\calH(\alpha)$), just as integer points in $\ZZ^2$ can be used for asymptotic area calculations in $\RR^2$ (Zorich \cite{Zor2}, Eskin-Okounkov \cite{EskOk}). Thirdly, STSs can be used to tackle problems in polygonal billiards as shown by Eskin--Masur--Schmoll \cite{EskMasSchmoll} who used total counts of \emph{primitive} STSs (ones that do not cover any other STS than the square torus) in $\calH(2)$ and $\calH(1,1)$ to obtain the asymptotics for the number of closed orbits for billiards in a square table with a barrier. Fourthly, the square structure also opens up a suite of connections to analytic number theory via modular forms. For instance, Lelievre-Royer \cite{LelRoy} show that orbitwise counting functions in $\calH(2)$ are quasimodular.

We've seen complex analytical and Euclidean geometric definitions of general translation surfaces. STSs in $\STS_n$ have a third, combinatorial, description using pairs of permutations from $S_n$ (the symmetric group on $n$ letters) describing the gluings in the horizontal and vertical directions as in Figure \ref{fig:sqtiled}. In this paper we will consider $S_n \times S_n$, as a combinatorial model for $\STS_n$. This allows a study of the topological and geometric statistics. 

\subsection*{Genus of random STSs} 
Let $S(\sigma, \tau)$ be the square-tiled surfaced associated to $(\sigma, \tau) \in S_n \times S_n$. Let $G_n: S_n \times S_n \rightarrow \NN$ be the random variable defined by $G_n(\sigma, \tau) = 1 - \frac{\chi(S(\sigma, \tau))}{2}$ where $\chi(S)$ is the Euler characteristic of $S$. When $S(\sigma, \tau)$ is connected, the genus is well-defined and $G_n$ picks out the genus. Our main result is a local central limit theorem for $G_n$. Recall that for functions $f$ and $g$, $f(n) \in O(g(n))$ if there exists $M > 0$ and $N >0$ such that $|f(n)| \leq M g(n)$ for all $n > N$.
 
\begin{namedtheorem}[Genus]
\label{thm:expected}

Fix $a\in  \RR_{> 0}$. Then, uniformly (i.e. with the constant in the $O(\sqrt{\log(n)})$ term only depending on $a$) for all $\ell$  such that $n-\ell$ is even and $\frac{\ell - A(n)}{\sqrt{B(n)}} \in [-a, a]$ we have, 
$$ \Pr\left[G_n = \frac{n}{2} - \frac{\ell}{2} + 1\right] = \frac{(2+ O(\sqrt{\log(n)})\exp\left(- \frac{(\ell-A(n))^2}{2 B(n)}\right)}{\sqrt{2 \pi B(n)}}$$
where $A(n) = \sum_{i=1}^n \frac{1}{i}$ and $B(n) = \sum_{i=1}^n \frac{1}{i}-\frac{1}{i^2}$. Hence, $G_n$ is asymptotically normal with mean and variance given as:
$$\EE[G_n] = \frac{n}{2} - \frac{\log(n)}{2} - \frac{\gamma}{2} + 1 + o(1); \hspace{1cm} \Var[G_n] = \frac{\log(n)}{4}+ \frac{\gamma}{4} - \frac{\pi^2}{24} + o(1)$$
where $\gamma \approx 0.577$ is the Euler-Mascheroni constant.
\end{namedtheorem}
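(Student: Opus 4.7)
My plan is to reduce the problem to a question about cycles of commutators of random permutations, and then to use the Frobenius character formula to reduce that, in turn, to the classical local central limit theorem for cycle counts of uniform permutations.

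\textbf{Step 1 (Genus as a cycle count of a commutator).} A square-tiled surface on $n$ squares has $F = n$ faces and $E = 2n$ edges, so its Euler characteristic equals $V - n$, where $V = V(\sigma, \tau)$ is the number of vertex equivalence classes; hence $G_n = 1 + (n - V)/2$, and we set $\ell := V$ to match the statement. Tracing the gluings around a corner, one full turn around a vertex sends the corner-label $i$ to $\tau \sigma \tau^{-1} \sigma^{-1}(i)$, so $V$ equals the number of cycles of the commutator $c := [\tau, \sigma]$. Since any commutator lies in $A_n$ and since the sign of $\pi \in S_n$ is $(-1)^{n - \#\mathrm{cyc}(\pi)}$, the parity hypothesis that $n - \ell$ be even is automatic.

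\textbf{Step 2 (Frobenius formula and reduction to uniform cycle counts).} Applying the Frobenius formula for commutators, one has
$$\Pr[c = \pi] \;=\; \frac{1}{n!} \sum_{\chi \in \Irr(S_n)} \frac{\chi(\pi)}{\chi(1)}.$$
The trivial and sign characters together contribute $2/n!$ when $\pi \in A_n$ and cancel off $A_n$, giving
$$\Pr[c = \pi] = \frac{2 \cdot \mathbf{1}_{\pi \in A_n}}{n!}\bigl(1 + R(\pi)\bigr), \qquad R(\pi) := \tfrac{1}{2} \sum_{\chi \neq \mathbf{1}, \mathrm{sgn}} \frac{\chi(\pi)}{\chi(1)}.$$
Summing over $\pi$ with $\#\mathrm{cyc}(\pi) = \ell$ (all of which lie in $A_n$ exactly when $n - \ell$ is even), we get
$$\Pr[V = \ell] = 2\,\Pr_{S_n}\!\bigl[\#\mathrm{cyc}(\pi) = \ell\bigr]\cdot \bigl(1 + \overline{R}_\ell\bigr),$$
where $\overline{R}_\ell$ is an average of nontrivial character ratios that should be small by Larsen--Shalev-type bounds $|\chi(\pi)/\chi(1)| \leq \chi(1)^{-\delta}$ for generic $\pi$, combined with the known distribution of dimensions of irreducible $S_n$-characters.

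\textbf{Step 3 (Local CLT for uniform cycle counts, and main obstacle).} By the Feller coupling, $\#\mathrm{cyc}(\pi)$ for $\pi$ uniform in $S_n$ is distributed as $\sum_{i=1}^n \xi_i$ with the $\xi_i$ independent Bernoullis of mean $1/i$; hence its mean and variance are exactly $A(n) = H_n$ and $B(n) = H_n - H_n^{(2)}$. A standard local CLT for independent non-identically-distributed Bernoulli sums then yields
$$\Pr_{S_n}\!\bigl[\#\mathrm{cyc}(\pi) = \ell\bigr] = \frac{1 + o(1)}{\sqrt{2 \pi B(n)}} \exp\!\left(-\frac{(\ell - A(n))^2}{2 B(n)}\right)$$
uniformly in the CLT window; inserting this into Step 2 yields the theorem, with the $\EE[G_n]$ and $\Var[G_n]$ asymptotics following from $H_n = \log n + \gamma + o(1)$ and $H_n^{(2)} = \pi^2/6 + o(1)$. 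The main technical obstacle is Step 2: controlling $\overline{R}_\ell$ with the precision demanded by the explicit error term in the statement. The trivial bound $|\chi(\pi)/\chi(1)| \leq 1$ is useless, so one must exploit the decay of character ratios on generic permutations and average those estimates over all $\pi$ sharing a given cycle count. For the mean and variance alone, an orthogonality-based second-moment argument (using $\sum_\chi \chi(\pi)^2 = |Z(\pi)|$) already suffices to make $\overline{R}_\ell = o(1)$ on average, but pinning down the explicit local-CLT error term is where most of the work will lie.
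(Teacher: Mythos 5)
Your reduction is the right one and matches the paper's architecture: genus as the cycle count of the commutator, the Frobenius character formula, the factor of $2$ from the restriction to $A_n$, and the classical local CLT for cycle counts with mean $A(n)$ and variance $B(n)$. But the step you flag as "the main technical obstacle" — controlling the averaged character-ratio error $\overline{R}_\ell$ — is exactly the content of the theorem, and you leave it open while pointing at the wrong tool. Pointwise Larsen--Shalev-type bounds on $\chi(\pi)/\chi(1)$ for generic $\pi$ are neither needed nor the natural route. The paper's resolution is an $L^2$/total-variation argument that avoids any pointwise character estimates: since $P_{w_c,n}$ is a class function, its Fourier transform at $\rho^\lambda$ is computed exactly from the Frobenius formula and character orthogonality to be $(\dim\rho^\lambda)^{-2}I$; the Diaconis--Shahshahani upper bound lemma then gives $\|P_{w_c,n}-U_{A_n}\|^2 \leq \tfrac14\sum_{\lambda\neq(n),(1^n)}(\dim\rho^\lambda)^{-2}$, and the Liebeck--Shalev estimate $\sum_{\chi}\chi(1)^{-s}=2+O(n^{-s})$ yields $\|P_{w_c,n}-U_{A_n}\|=O(1/n)$. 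Applied to the event $\{V=\ell\}$, this bounds your $2\Pr_{S_n}[\#\mathrm{cyc}=\ell]\cdot\overline{R}_\ell$ additively by $O(1/n)$, which is negligible against the main term of order $1/\sqrt{\log n}$ in the CLT window; the local CLT for uniform $A_n$ cycle counts (assembled by Chmutov--Pittel from Sachkov, Kolchin, et al.) then transfers verbatim. Without this (or an equivalent) quantitative comparison to the uniform measure, your Step 2 is not a proof.

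A second, smaller gap: the mean and variance asymptotics do not follow from the local CLT alone, since that only controls $\ell$ within $O(\sqrt{B(n)})$ of $A(n)$; one needs tail control to pass to moments. The paper supplies this via a large-deviation bound $\Pr(C_{w_c,n}\geq t)=O(n/2^t)$, extracted from the explicit generating function $\sum_{\sigma,\tau}q^{C_{w_c,n}}=n!\sum_{\lambda\vdash n}\prod_{r\in\lambda}(q+\cont(r))$ evaluated at $q=2$, and then a summation-by-parts comparison of $\EE[p(C_{w_c,n})]$ with $\EE[p(C_{\eta,n})]$ for $\eta$ uniform on $A_n$. Your proposed "orthogonality-based second-moment argument" is too vague to substitute for this; you would need to make the tail estimate explicit.
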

The distribution of the genus of square-tiled surfaces was initially obtained by S. Lechner \cite{Lech} using the representation theory of the wreath product of $S_n$ and $\ZZ/4\ZZ$. We use the representation theory of the symmetric and alternating groups and our method allows us to obtain a stronger local central limit theorem for the genus in comparison to Lechner's result. We also generalize to polygon-tiled surfaces which are branched covers of other simple translation surfaces. Moreover, our method also allows investigation of geometric features which aren't apparent from Lechner's approach. 

The Genus Theorem fits in well with the large body of already existing literature on the topology of random surfaces. Gamburd-Makover \cite{GamMak} and Pippenger-Schleich \cite{PipSchleich} studied surfaces built out of $n$ ideal hyperbolic triangles and proved that the expected genus is $n/4 - \log (n)/2 + O(1)$. Fleming-Pippenger \cite{FlemPip} generalized and studied surfaces built out of $N/k$ $k$-gons and total number of sides $N$ allowing all possible gluings assuming that $\lcm(2,k)$ divides $N$. They got large deviation results and moments of the number of vertices, a result which we will parallel in our case (Theorem \ref{thm:moments}), borrowing their proof ideas. Chmutov-Pittel \cite{ChmuPit} generalized further and studied surfaces built out of $n$ polygons with a (necessarily even) total of $N$ of sides, and computed the expected genus to be $(N/2 - n - \log(N))/2$. None of these constructions considered restricting to the case of translation surfaces. The collection of translation surfaces in their models have 0 asymptotic density and hence the general asymptotic results do not apply to translation surfaces.

It is interesting to compare the expected value in the Genus Theorem with the general case of non-translation gluings considered by Gamburd and others. For a surface (not necessarily translation) built out of $n$ squares the expected genus is $\frac{n}{2} - \frac{\log(n)}{2} - \frac{\gamma}{2} +1 - \frac{\log(4)}{2}$ which is lower than the translation case (by an additive constant of $\frac{\log(4)}{2}$). Heuristically, one of the reasons  we ``gain genus" in the translation case (apart from the possible artefact of using a different randomization model) is the fact that all surfaces constructed using only translation gluings necessarily have genus at least 1.



\subsection*{Holonomy of random surfaces} 
For any translation surface $(X,\omega)$, we can write $\Hol(X)$ for its set of holonomy vectors. In this paper we will also study holonomy vectors of STSs, all of which reside in $\ZZ^2$. For instance, the holonomy vectors of the square torus $\TT$ are given by the set
$ \Prim:= \SL_2(\ZZ) \cdot (1,0)$ (denoted $\Prim$ as it is exactly comprised of those $\ZZ^2$ vectors which have relatively prime components). Since $\TT$ does not have any cone point, we use $0$ as a marked point in order to be able to define holonomy vectors on $\TT$. Recognizing that STSs are branched covers of the standard square torus, in this paper we investigate the likelihood of an STS to have the holonomy vectors of the torus. We call a square-tiled surface $S$ a \emph{holonomy torus} if $\Hol(S) = \Prim$ and a \emph{visibility torus} if $\Hol(S) \supset \Prim$ (since the set $\Prim$ is also sometimes called the visible points of $\ZZ^2$.) Our main result on the geometry of random STSs considers the asymptotic densities of holonomy and visibility tori.

\begin{restatable*}[Holonomy]{namedtheorem}{holonomy}\label{thm:holonomy} Let $S(\sigma, \tau) \in \STS_n$  given by $(\sigma, \tau) \in S_n \times S_n$. Then, as $n \rightarrow \infty$, 
$$\Pr[S(\sigma, \tau) \text{ is a holonomy torus }] =\frac{1}{e} +O(n^{-1});\qquad \Pr[S(\sigma, \tau) \text{ is a visibility torus }] = 1 - O\left(\frac{n}{2^{n/2}}\right)$$
\end{restatable*}

There reason why the constant $1/e$ appears as the limiting proportion of holonomy tori is easily described --- (connected) holonomy tori of genus greater than one are precisely those STSs which have cone points at each corner of each square. These are in turn characterized by the pairs $(\sigma, \tau)$ for which the commutator $[\sigma, \tau]$ is a derangement (i.e. contains no fixed points). The density of such permutations turns out to be precisely 1/e.

The Holonomy Theorem is another result in the already existing sea of literature on the geometry of random surfaces. Brooks-Makover \cite{BrooksMak} use random cubic graphs with random orientation as their model for random surfaces built out of ideal hyperbolic triangles and show that almost all such surfaces have large first eigenvalues, large Cheeger constants and large embedded hyperbolic balls. Additionally, Guth-Parlier-Young \cite{GuthParYoung}, Mirzakhani-Petri \cite{MirPet}, Petri \cite{Pet1, Pet2}, Petri-Thale \cite{PetTha} and others have studied geometric properties such as length of systoles, length spectrum, pants length etc. of random (not necessarily translation) surfaces. Masur-Rafi-Randecker \cite{MasRafRan} on the other hand study the expected diameter of a translation surface in the stratum $\calH_1(2g-2)$ with respect to the Masur-Veech volume form and obtain it is bounded above by a uniform multiple of $\sqrt{\log(g)/g}$.

Saddle connections on translation surfaces have also been well-studied before. For instance, Masur \cite{Mas2, Mas3}, showed that for a fixed translation surface $S$ there are quadratic lower and upper bounds on the number of saddle connections up to length $R$ depending on $S$. Eskin-Masur \cite{EskMas} improved on that result by showing that in any fixed stratum, almost every surface (with respect to the Masur-Veech measure) has an exact quadratic asymptotic for the growth rate of saddle connections up to length $R$, with the constant solely depending on the stratum. We note that this latter result does not cover STSs since they form a measure zero set with respect to the Masur-Veech measure in any stratum. Hence, theorem \ref{thm:holonomy} can be seen as a finer, complimentary result to Eskin and Masur's theorems. 

In previous work with Wang \cite{ShresWang} we proved that for a fixed stratum, a random STS is asymptotically almost surely not a visibility torus. The Holonomy Theorem sounds quite different, but not a contradiction since in this case the stratum is not fixed. Furthermore, the Holonomy Theorem immediately implies that a random STS asymptotically almost surely has a unit holonomy vector, which is shown in \cite{ShresWang} to not be the case if we restrict to the stratum $\calH(2)$.

Our paper is organized the following way. In Section~\ref{sec:background} we give the necessary background on translation surfaces, square-tiled surfaces and the generalization to polygon-tiled surfaces. We also state some theorems and preliminary propositions pertaining to permutation statistics and representation theory of the symmetric and alternating groups. Section \ref{sec:topology} will be devoted to the proof of results on the topology of random STSs, including the Genus Theorem. Section \ref{sec:geometry} will contain the proof of the Holonomy Theorem. In Section \ref{sec:PTS} we will generalize results from Section \ref{sec:topology} to polygon-tiled surfaces. We decided to postpone the general results to the last section since the proofs for the square-tiled case essentially consist of all the key ideas of the general case. Moreover, the general proofs for polygon-tiled surfaces need to be divided into even and odd cases which makes the exposition cumbersome to read while the square-tiled case is free of this issue. Hence, the reader who is interested in simply the square-tiled case (which is the more popular case) can skip the last section entirely. Finally, we also include an Appendix that contains some basic background on combinatorial tools used. 

\subsection*{Acknowledgments} We are extremely grateful to Moon Duchin for suggesting this project and advising us through it. We also thank Bram Petri, Jayadev Athreya and Larry Guth for initial conversations and for guiding us to the existing literature on random surfaces. We would also like to express our immense gratitude to Thomas Weighill and Nate Fisher for helpful conversations throughout this project. We are also grateful to Boris Hasselblatt, Vincent Delecroix and Carlos Matheus for comments on the initial drafts.

\section{Background}
\label{sec:background}
%

\subsection{Translation surfaces and their moduli spaces}

A \textbf{translation surface} can be defined geometrically as a collection of polygons in the plane with sides identified in parallel opposite pairs by translation, up to equivalence by cut and paste operations. They can also be defined as pairs $(X, \omega)$ where $\omega$ is a holomorphic one-form on the Riemann surface $X$. Translation surfaces are locally flat except at finitely many \textbf{cone points} or \textbf{singularities} where they have a cone angle of $2 \pi \ell$ for some integer $\ell \geq 2$. A cone point of angle $2 \pi \ell$ will correspond to a zero of the one-form $\omega$ of order $\ell-1$. The group $\SL_2(\RR)$ acts on translation surfaces via its linear action on $\RR^2$ --- for $A \in \SL_2(\RR)$, and $X$ a translation surface, $A\cdot X$ is obtained by acting on the polygons of $X$ linearly. This preserves the number and order of the cone points.

A Gauss-Bonnet type theorem holds for (connected) translation surfaces: if $g$ is the genus of the surface and $\alpha_1, \ldots, \alpha_s$ are the degrees of the zeros of the one-form, then the surface must satisfy the relation $\sum_{i=1}^s \alpha_i = 2g-2.$ Genus $g$ translation surfaces then fall into finitely many \textbf{strata} $\mathcal{H}(\alpha)$ where $\alpha = (\alpha_1, \ldots, \alpha_s)$ describes the orders of the zeros of the one-form, $s$ zeros of orders $\alpha_1, \ldots, \alpha_s$. For any $g$, $\calH(2g-2)$ is called a \textbf{minimal stratum} and $\calH(\overbrace{1,\dots, 1}^{2g-2})$ is called a \textbf{principal stratum}. Although we will deal with surfaces with multiple connected components, the strata $\calH(\alpha)$ will only contain connected surfaces. 

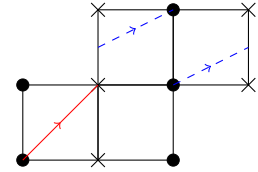
\begin{wrapfigure}{r}{0.33\textwidth}
\vspace{-0.1cm}
\centering
\begin{tikzpicture}[sq/.style=
  {shape=regular polygon, regular polygon sides=4, draw, minimum width=1.414cm}, decoration={
    markings,
    mark=at position 0.5 with {\arrow{>}}}]
\node[sq] at (0,0){};
\node[sq] at (1,0){};
\node[sq] at (1,1){};
\node[sq] at (2,1){};  


\foreach \i in {-135, 135}{
\draw[fill] (\i:1.414/2) circle [radius = 0.08];
}

\foreach \i in {45, -45}{
\draw[fill, shift = {(1,0)}] (\i:1.414/2) circle [radius = 0.08];
}

\draw[fill, shift = {(1,1)}] (45:1.414/2) circle [radius = 0.08];


\foreach \i in {-45, 45}{
\draw[shift={(2,1)}] (\i:1.414/2) node[cross=3]{};
}

\foreach \i in {45, -45}{
\draw (\i:1.414/2) node[cross=3] {} ;
}

\draw[shift = {(1,1)}] (135:1.414/2) node[cross=3]{};


\draw[red, postaction={decorate}] (-135:1.414/2) -- (45:1.414/2);

\draw[blue, dashed, shift = {(2,1)}, postaction={decorate}] (-135:1.414/2) --(0:0.5);
\draw[blue, dashed, shift = {(1,1)}, postaction={decorate}] (-180:0.5) --(45:1.414/2);

\end{tikzpicture}
\caption{Saddle connections on $S \in \calH(1,1)$ built out of unit squares. The holonomy vector of the red (solid) saddle connection is {\color{red}(1,1)} , and the holonomy vector of the blue (dashed) saddle connection is {\color{blue}(2,1)}.}
\label{fig:saddleconnection}
\vspace{-0.3cm}
\end{wrapfigure}

On a given translation surface, a {\bf saddle connection} is a straight-line local geodesic segment whose endpoints are cone points but with no cone points on the interior. If there are no cone points on a connected component of the surface, then we mark a point and look at segments from the marked point to itself as saddle connections. The {\bf holonomy} of a saddle connection is the corresponding Euclidean displacement vector. See Figure \ref{fig:saddleconnection} for some examples. The collection of holonomy vectors of a translation surface $(X, \omega)$ will be denoted $\Hol(X)$. If $X$ has multiple connected components, then $\Hol(X)$ is the union of the holonomy vectors of each of the connected components.

\subsection{Square-tiled surfaces}

Square-tiled surfaces (STSs) are translation surfaces that are branched covers of the standard square torus $\CC/\ZZ[i]$ with branching over exactly one point. Geometrically, STSs are constructed from finitely many squares with sides glued in parallel opposite pairs. The number of squares is equal to the degree of the branched cover over $\TT$. We denote the collection of STSs built out of $n$ squares as $\STS_n$. Note that $\STS_n$ also contains surfaces with multiple connected components. The natural action of $\SL_2(\RR)$ on translation surfaces restricts to an action of $\SL_2(\ZZ)$ on STSs which preserves the number of squares and the cone point data. 

Recall from introduction that an STS is a \textbf{holonomy torus} if $\Hol(S) = \Prim := \SL_2(\ZZ) \cdot (1,0)$ and a \textbf{visibility torus} if $\Hol(S) \supset \Prim$. The set $\Prim \subset \ZZ^2$ is characterized by having relatively prime components and is sometimes referred to as the primitive vectors or visible points of $\ZZ^2$. See Figure \ref{fig:holvisnonvisexamples} for holonomy and visibility tori examples and non-examples in $\calH(2)$.
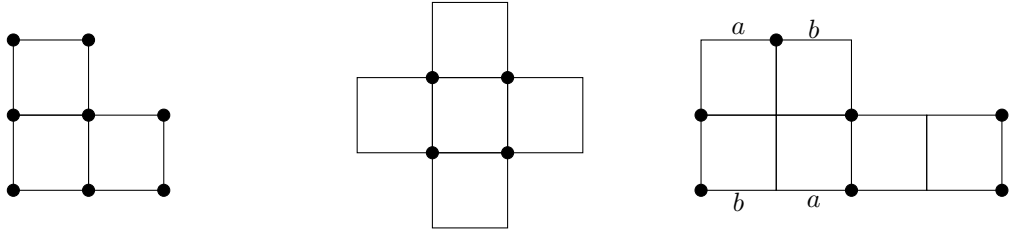
\begin{figure}[h!!]
\begin{minipage}{0.3\textwidth}
\centering
\begin{tikzpicture}
    [sq/.style=
  {shape=regular polygon, regular polygon sides=4, draw, minimum width=1.414cm}]
  \node[sq] at (0,0){};
  \node[sq] at (0,1){};
  \node[sq] at (1,0){};
%
\foreach \i in {45, 135, -45, -135}{
	\draw[fill] (\i:1.414/2) circle [radius=0.08];
	}

\foreach \i in {45, 135}	{
\draw[fill, shift ={(0,1)}] (\i:1.414/2) circle [radius=0.08];
}

\foreach \i in {45, -45}	{
\draw[fill, shift ={(1,0)}] (\i:1.414/2) circle [radius=0.08];
}

\end{tikzpicture}
\end{minipage}
\begin{minipage}{0.3\textwidth}
\centering
\begin{tikzpicture}
    [sq/.style=
  {shape=regular polygon, regular polygon sides=4, draw, minimum width=1.414cm}]
  \node[sq] at (0,0){};
  \foreach \i in {1, -1}{
  	\node[sq] at (0,\i){};
  	\node[sq] at (\i,0){};
  	}
  \foreach \i in {45, 135 , -45 ,-135}{
  \draw[fill] (\i: 1.414/2) circle [radius = 0.08];
  }
  
\end{tikzpicture}
\end{minipage}
\begin{minipage}{0.3\textwidth}
\centering
\begin{tikzpicture}
    [sq/.style=
  {shape=regular polygon, regular polygon sides=4, draw, minimum width=1.414cm}]
  \foreach \i in {1-0.5,2-0.5, -1+0.5,-2+0.5}{
  	\node[sq] at (\i,0){};
  	}
  \node[sq] at (-0.5, 1){};
  \node[sq] at (-1.5, 1){};
%
%
	\node[shift = {(-1.5, 1)}] at (90:0.65){$a$};
	\node[shift = {(-0.5, 1)}] at (90:0.65){$b$};
	\node[shift = {(-0.5, 0)}] at (-90:0.65){$a$};
	\node[shift = {(-1.5, 0)}] at (-90:0.65){$b$};

	\foreach \i in {135 ,-135}{
  \draw[fill, shift={(0.5, 0)}] (\i: 1.414/2) circle [radius = 0.08];
  }

	\foreach \i in {135 ,-135}{
  \draw[fill, shift={(-1.5, 0)}] (\i: 1.414/2) circle [radius = 0.08];
  }
  
  \foreach \i in {135 ,-135}{
  \draw[fill, shift={(2.5, 0)}] (\i: 1.414/2) circle [radius = 0.08];
  }
  
  \draw[fill, shift={(-1.5, 1)}] (45: 1.414/2) circle [radius = 0.08];

\end{tikzpicture}
\end{minipage}

\caption{Examples in $\calH(2)$. Unless otherwise indicated, sides are glued in opposite pairs. \emph{Left}: A holonomy torus in $\calH(2)$. Using Proposition \ref{prop:althol}, we see it is indeed a holonomy torus. \emph{Center}: A visibility torus that is not a holonomy torus in $\calH(2)$. Since it contains $(2,0)$ in its holonomy set, it is not a holonomy torus, but using Proposition \ref{prop:altvis}, we can verify it is a visibility torus. \emph{Right}: A non-visibility STS in $\calH(2)$. Checking visually, we see that $(1,0)$ is not in its holonomy set.}
\label{fig:holvisnonvisexamples}

\end{figure}

Note that since $\Hol(S)$ of a surface $S$ with multiple connected components is the union of the holonomy sets of the components, if $S$ has a square torus connected component, then the holonomy set of the square torus component is defined to be $\Prim$ (by marking a point), and consequently $S$ is a visibility torus. If in addition, the holonomy set of each component is $\Prim$, then $S$ is a holonomy torus. 
The following proposition, first proved with Wang for connected surfaces, then gives an alternate characterization of holonomy tori which will be useful.
\begin{proposition}[Shrestha-Wang \cite{ShresWang}]\label{prop:althol}$S \in \STS_n$ is a holonomy torus if and only if every corner of every square is either a singularity or a non-singular point in a square torus component.
\end{proposition}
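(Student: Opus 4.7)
The plan is to analyze $S$ component by component: for each connected component $C$ of $S$, I pin down $\Hol(C)$ in terms of the corner structure of $C$, and then combine via $\Hol(S) = \bigcup_C \Hol(C)$.

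For the forward direction I argue the contrapositive, splitting into two cases based on whether $C$ contains a cone point.

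\textbf{Case 1.} Suppose $C$ has both a cone-point corner and a regular corner. The graph on corners of $C$ (with edges given by sides of squares of $C$) is connected, so I can find a cone point $p$ and a regular corner $r$ joined by a side of a common square; without loss of generality $r$ lies one unit to the right of $p$. Extending the horizontal trajectory from $p$ rightward past $r$, and using that horizontal trajectories on an STS are closed and organized into cylinders, the trajectory must terminate at a cone point $p'$ after integer horizontal length $k \geq 2$; the corresponding horizontal saddle connection contributes $(k,0) \in \Hol(C)$, which is not primitive, so $S$ is not a holonomy torus.

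\textbf{Case 2.} Suppose $C$ has no cone points. Then $C$ has genus $1$, and being flat and built from squares it is isomorphic to $\RR^2/\Lambda_C$ with $\Lambda_C \subseteq \ZZ^2$ a sublattice of index equal to the number of squares of $C$. With a single marked corner, $\Hol(C)$ equals the set of primitive vectors of $\Lambda_C$, which is contained in $\Prim$ iff $\Lambda_C = \ZZ^2$: if $\Lambda_C \subsetneq \ZZ^2$, pick $w \in \Prim \setminus \Lambda_C$ of order $m \geq 2$ in $\ZZ^2 / \Lambda_C$; then $mw$ is primitive in $\Lambda_C$ but not primitive in $\ZZ^2$. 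Hence $C$ must be the unit square torus.

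For the backward direction, suppose every corner is either a cone point or a regular corner of a unit-square torus component. Unit torus components contribute exactly $\Prim$ by definition. For a component $C$ whose every corner is a cone point, I establish $\Hol(C) = \Prim$ by a universal-cover argument: starting at a cone point $p$ and moving in direction $v \in \Prim$, the lifted segment runs from one lattice point of $\ZZ^2$ to the next lattice point in direction $v$, and primitivity of $v$ implies no intermediate lattice points, so (since all corners of $C$ are cone points) the segment carries no interior corners on the surface, giving a saddle connection of holonomy $v$. Conversely, any saddle connection of $C$ lifts to a segment between lattice points whose interior contains no lattice points (all corners of $C$ being cone points, and saddle connections avoiding interior cone points), so its holonomy is primitive. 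Taking unions over components yields $\Hol(S) = \Prim$.

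The main technical obstacle I anticipate is the trajectory-extension step in Case 1 --- ensuring that the horizontal trajectory from $p$ through $r$ indeed terminates at a cone point at integer horizontal distance $k \geq 2$, rather than closing up as a periodic orbit that avoids cone points. The resolution is the cylinder decomposition of the horizontal foliation on an STS: the trajectory from $p$ lies on the boundary of a horizontal cylinder, whose boundary consists of horizontal saddle connections of integer length.
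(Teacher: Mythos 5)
Your proof is correct, and it is considerably more detailed than the one in the paper, which disposes of the proposition in three sentences. The underlying mechanism is the same in both: corners of squares develop to lattice points of $\ZZ^2$, so a saddle connection has primitive holonomy exactly when its interior meets no corner. Where you differ is in how a ``bad'' corner (neither a singularity nor a point of a square torus component) is converted into a non-primitive holonomy vector. The paper simply extends a line segment through the bad corner in a rational direction and asserts that this yields a non-primitive saddle connection; this is uniform but leaves implicit both why the extended trajectory terminates at singularities (or marked points) on both ends and what happens on cone-point-free components. You instead anchor the trajectory at a cone point adjacent to a regular corner, where termination after integer length $k \geq 2$ is guaranteed by the horizontal cylinder decomposition, and you treat cone-point-free components separately by identifying them with $\RR^2/\Lambda_C$ for a finite-index sublattice $\Lambda_C \leq \ZZ^2$ and exhibiting a $\Lambda_C$-primitive vector that is $m \geq 2$ times a $\ZZ^2$-primitive one. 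This buys you a complete treatment of disconnected surfaces and of torus components with more than one square, which the paper's sketch glosses over; the paper's version buys brevity and a single uniform construction. Your backward direction (all corners good $\Rightarrow \Hol(C) = \Prim$ componentwise via the developing map) is the contrapositive of the paper's first implication and rests on essentially the same observation.
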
 
\begin{proof} If $S$ is not a holonomy torus, then it has a vector not in $\Prim$ which is the holonomy vector of a saddle connection. This saddle connection then must go through a corner of a square that is neither a singularity nor a marked point in a $\TT$ component. Conversely, if there exists a corner of a square that is not a singularity (nor a marked point), then extending a line segment through this point in any rational slope direction yields a non-primitive vector as a saddle connection and hence the surface is not a holonomy torus.
\end{proof} 
With Wang, we also obtained a characterization of visibility tori in terms of the $\SL_2(\ZZ)$ action on $\STS_n$.
\begin{proposition}[Shrestha-Wang \cite{ShresWang}]\label{prop:altvis}$S \in \STS_n$ is a visibility torus if and only if every surface in the $\SL_2(\ZZ)$ orbit of $S$ has a unit length horizontal saddle connection.
\end{proposition}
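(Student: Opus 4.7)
The plan is to reduce the statement to the equivariance of the holonomy set under the $\SL_2(\ZZ)$ action. The essential input I would establish (or invoke) first is that for any $A \in \SL_2(\ZZ)$ and any $S \in \STS_n$, one has $\Hol(A \cdot S) = A \cdot \Hol(S)$. This follows directly from the geometric definition of the $\SL_2(\ZZ)$ action: the action is by a linear map on the underlying polygons, and it sends cone points to cone points (and, on a square torus component, a marked corner to a marked corner). Therefore saddle connections are carried to saddle connections, and the holonomy vector of the image saddle connection is exactly the image of the original holonomy vector under $A$. Once this equivariance is in hand, the rest is symbol-pushing.

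Next I would rewrite both conditions in the proposition in terms of a universally quantified statement over $\SL_2(\ZZ)$. By definition, $S$ being a visibility torus means $\Prim \subset \Hol(S)$, and by definition of $\Prim$ this is the same as requiring $A \cdot (1,0) \in \Hol(S)$ for every $A \in \SL_2(\ZZ)$. Using equivariance, $A \cdot (1,0) \in \Hol(S)$ is equivalent to $(1,0) \in A^{-1} \cdot \Hol(S) = \Hol(A^{-1}\cdot S)$, which is exactly the statement that $A^{-1}\cdot S$ admits a unit-length horizontal saddle connection. Since $A$ ranges over all of $\SL_2(\ZZ)$ iff $A^{-1}$ does, this yields both implications simultaneously: $S$ is a visibility torus iff every surface in $\SL_2(\ZZ) \cdot S$ has a unit-length horizontal saddle connection.

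I would then add a small remark to address the degenerate case where $S$ has a square torus connected component. In that case, by the convention stated earlier in the section, $\Prim$ is by definition included in the holonomy set of the torus component (via the chosen marked point), so the statement on visibility reduces to the same equivariance argument applied componentwise; equally, each torus component in any surface of the $\SL_2(\ZZ)$-orbit trivially has a unit horizontal saddle connection.

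Honestly, I do not expect any significant obstacle here: the whole content is the $\SL_2(\ZZ)$-equivariance of $\Hol$ combined with the orbit description $\Prim = \SL_2(\ZZ)\cdot(1,0)$. The only point that requires a moment of care is making sure that the action on $\STS_n$ really does permute holonomy vectors as claimed (in particular that $A$ maps saddle connections to saddle connections, not to broken segments), which is standard but worth stating cleanly.
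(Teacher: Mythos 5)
Your argument is correct: the whole content of the proposition is the equivariance $\Hol(A\cdot S)=A\cdot\Hol(S)$ together with the orbit description $\Prim=\SL_2(\ZZ)\cdot(1,0)$, and your reduction $A\cdot(1,0)\in\Hol(S)\iff(1,0)\in\Hol(A^{-1}\cdot S)$ handles both directions at once; your remark on square torus components correctly matches the paper's convention for marked points. Note that the paper itself gives no proof of this proposition (it is quoted from the cited Shrestha--Wang reference, unlike Proposition \ref{prop:althol} which is reproved in the text), so there is nothing to compare against; your proof is the natural one and I see no gap.
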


We next state a theorem proved with Wang which asserts that the visibility properties of STSs in a fixed stratum are governed by the number of squares. We will use part (3) of this theorem in our proof of the Holonomy Theorem.

\begin{theorem}[Shrestha-Wang \cite{ShresWang}]\label{thm:shresthawang}

For a fixed stratum $\calH(\alpha) = \calH(\alpha_1, \dots, \alpha_s)$, the visibility properties of STSs in the stratum are governed by their number of squares, $n$, in the following way, illustrated in the figure below:

\begin{enumerate}
\item There are no STSs in $\calH(\alpha)$ with fewer than $2g+s-2$ squares. 
\item An STS $S$ has $n=2g+s-2$ squares if and only if $\Hol(S) = \Prim$ for all $S \in \calH(\alpha) \cap \STS_n$.
\item If $2g+s-2 < n \leq 4g+2s-5$, then all $S \in \calH(\alpha) \cap \STS_n$ are visibility tori; in fact $\Prim \subsetneq \Hol(S)$.
\item If $n=4g+2s-4$, then there exists $S \in \calH(\alpha) \cap \STS_n$ that is not a visibility torus.
\item There exists $N(\alpha)$ such that if $n > N(\alpha)$ then all $S \in \calH(\alpha) \cap \STS_n$ are non-visibility tori, i.e. for all such STSs, there exists $v \in \Prim \smallsetminus \Hol(S)$.
\end{enumerate}

\begin{figure}[h!!!!]
\begin{tikzpicture}
\node at (-3,0){$\calH(\alpha)$};
\draw[-latex]   (-2,0) -- (13,0) ; 

\draw[shift={(-2,0)},color=black] (0,0.2) -- (0,-0.2) node[below] 
{$0$};



\draw[{[-)}, thick] (-2,.4) -- (2, .4);
\draw[{(-]}, thick] (2,.4) -- (6, .4);
\draw[{(- >}, thick] (11,.4) -- (13, .4);

\node[scale=0.7] at (-0.2,1){$\STS_n \cap \calH(\alpha) = \emptyset$};
\node[shift={(2,0)},scale=0.7]at (0, 1){$\Prim  = \Hol(S)$};
\node[scale=0.7] at (4,1){$\Prim \subsetneq  \Hol(S)$};
\node[scale=0.7] at (12,1){$\Prim \smallsetminus \Hol(S) \neq \emptyset$};

\draw[shift={(2,0)},color=black, -latex] (0,0.9)--(0,0.5);
\node[scale=0.7] at (8.35,0.8){};
\draw[shift={(-1,0)},-latex] (3,-1.5) -- (9.5,-1.5) node[midway,fill=white] {Number of squares};

\draw[shift={(2,0)},color=black] (0,0.2) -- (0,-0.2) node[below] 
{$2g+s-2$} ;
\draw[shift={(6,0)},color=black] (0, 0.2) -- (0,-0.2) node[below] 
{$4g+2s-5$} ;
\draw[shift={(11,0)},color=black] (0,0.2) -- (0,-0.2) node[below] 
{$N(\alpha)$} ;











\end{tikzpicture}
\label{fig:mconstants}
\end{figure}
\end{theorem}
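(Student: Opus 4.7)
The plan is to organize the proof around a single corner-counting identity and apply it in each of the five regimes, combined with the alternate characterizations in Propositions~\ref{prop:althol} and~\ref{prop:altvis}. Let $v_{\mathrm{reg}}$ denote the number of regular (non-cone-point) vertices of $S$. Since each square contributes four corners, while each cone point of angle $2\pi(\alpha_i+1)$ accumulates exactly $4(\alpha_i+1)$ corners and each regular vertex uses exactly $4$ corners, counting square-corners two ways yields
\[
4n \;=\; \sum_{i=1}^s 4(\alpha_i+1) \,+\, 4 v_{\mathrm{reg}} \;=\; 4(2g+s-2) + 4 v_{\mathrm{reg}},
\]
so $n = (2g+s-2) + v_{\mathrm{reg}}$. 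Parts (1) and (2) follow immediately: since $v_{\mathrm{reg}} \geq 0$ we obtain $n \geq 2g+s-2$, with equality iff every square-corner is a cone point, which by Proposition~\ref{prop:althol} is equivalent to $\Hol(S) = \Prim$.

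For part (3), the key claim is: \emph{if $T \in \calH(\alpha) \cap \STS_n$ has no unit horizontal saddle connection, then $n \geq 4g+2s-4$.} At each regular vertex, exactly two horizontal edge-endpoints are incident (one from the edge going east, one from the edge going west), so the number of horizontal edge-endpoints at regular vertices is $2 v_{\mathrm{reg}}$. The absence of a unit horizontal saddle connection means every one of the $n$ horizontal edges has at least one endpoint at a regular vertex, forcing $2 v_{\mathrm{reg}} \geq n$; substituting into the identity yields $n \geq 4g+2s-4$. By contraposition, whenever $n \leq 4g+2s-5$ every $T \in \calH(\alpha) \cap \STS_n$ has a unit horizontal saddle connection. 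Since the $\SL_2(\ZZ)$-action preserves both the stratum and the number of squares, every surface in the $\SL_2(\ZZ)$-orbit of $S$ lies in $\calH(\alpha) \cap \STS_n$ and so has such a saddle connection, and Proposition~\ref{prop:altvis} then gives $\Prim \subseteq \Hol(S)$. Combined with (1)--(2) (which force $\Hol(S) \neq \Prim$ once $n > 2g+s-2$), we conclude $\Prim \subsetneq \Hol(S)$.

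For part (4), the threshold $n = 4g+2s-4$ is exactly the equality case of the inequality above, so the plan is to exhibit an explicit STS in $\calH(\alpha) \cap \STS_{4g+2s-4}$ in which every horizontal edge has exactly one regular endpoint and one cone-point endpoint; such an STS has no unit horizontal saddle connection and so fails to be a visibility torus by Proposition~\ref{prop:altvis}.

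Part (5) is the hardest and will be the main obstacle. Since the number of cone-point corners is $4(2g+s-2)$, a constant depending only on $\alpha$, the number of unit horizontal saddle connections available in any $\SL_2(\ZZ)$-translate of $S$ is bounded independently of $n$, while $n$ grows. The plan is to produce, for each sufficiently large $S$, a specific element of $\SL_2(\ZZ)$ whose action spreads the cone-point positions apart so that no two are horizontally adjacent in the translate; equivalently, one seeks a primitive direction on $S$ in which no unit saddle connection exists. Making this quantitative---proving a uniform $N(\alpha)$ beyond which \emph{every} STS in $\calH(\alpha) \cap \STS_n$ admits such a translate---requires a careful analysis of how cone-point positions distribute across $\SL_2(\ZZ)$-orbits as $n$ grows, and is the step we expect to require the most work.
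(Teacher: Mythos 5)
First, a point of reference: the paper does not actually prove Theorem~\ref{thm:shresthawang} --- it is imported verbatim from \cite{ShresWang} and used as a black box (only part (3) is invoked later, in Lemma~\ref{lem:viscriterion}). So there is no in-paper proof to compare against, and your attempt has to be judged on its own.

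Your corner-counting identity $n = (2g+s-2) + v_{\mathrm{reg}}$ is correct (each vertex of angle $2\pi\ell$ absorbs $4\ell$ square-corners, and $\sum_i(\alpha_i+1) = 2g+s-2$ by Gauss--Bonnet), and it cleanly yields (1) and (2) via Proposition~\ref{prop:althol}, using that surfaces in a stratum are connected so the torus-component caveat in that proposition is vacuous. The argument for (3) is also sound: a unit horizontal saddle connection on an STS is exactly a horizontal edge of the tiling with both endpoints singular, each regular vertex carries exactly two horizontal edge-ends, so the absence of such an edge forces $n \le 2v_{\mathrm{reg}}$ and hence $n \ge 4g+2s-4$; combining the contrapositive with $\SL_2(\ZZ)$-invariance of the stratum and square count, and with Proposition~\ref{prop:altvis}, gives $\Prim \subseteq \Hol(S)$, and (2) upgrades this to a proper inclusion. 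This is a complete and rather elegant proof of the first three parts.

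The genuine gap is that parts (4) and (5) are not proved. For (4) you correctly identify what a witness must look like (every horizontal edge with exactly one regular and one singular endpoint, in every $\SL_2(\ZZ)$-translate... in fact only the identity translate needs to fail), but an existence claim requires an actual construction in each stratum $\calH(\alpha)$ with exactly $4g+2s-4$ squares, and none is given; it is not automatic that the equality case of your inequality is realized. For (5) the sketch does not yet contain a mechanism. The observation that each primitive direction admits at most $2g+s-2$ saddle connections with that exact holonomy does not by itself obstruct visibility, since only one saddle connection per direction is needed; and ``spreading the cone points apart by a suitable shear'' is not quantified --- one must show that for $n$ large, \emph{every} $(\sigma,\tau)$ in the stratum has some primitive $v \notin \Hol(S)$, which is a universal statement over all gluings, not an existence statement about one good translate. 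This is where the real content of \cite{ShresWang} lies, and as written the proposal defers exactly that content.
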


\subsection{Surfaces tiled by polygons with $2k$ sides} One can easily generalize the construction of square-tiled surfaces from squares to other even polygons. For a fixed integer $k \geq 2$, one can build a surface by taking finitely many regular unit area $2k$-gons and gluing the sides in parallel opposite pairs. Such a surface is called a \textbf{polygon-tiled surface}. These surfaces were initially studied in \cite{Aul} as examples of platonic solids with a choice of horizontal direction. We denote by $\PTS_{n,2k}$ the collection of polygon-tiled surfaces built out of $n$ $2k$-gons. Note that $\STS_n = \PTS_{n, 4}$. Figures \ref{fig:hexocttiled} show examples for $k=3, 4$. Also note that when considering the polygons, we fix (only up to translation) an embedding in the plane so that two of the sides of each polygon are vertical. 

 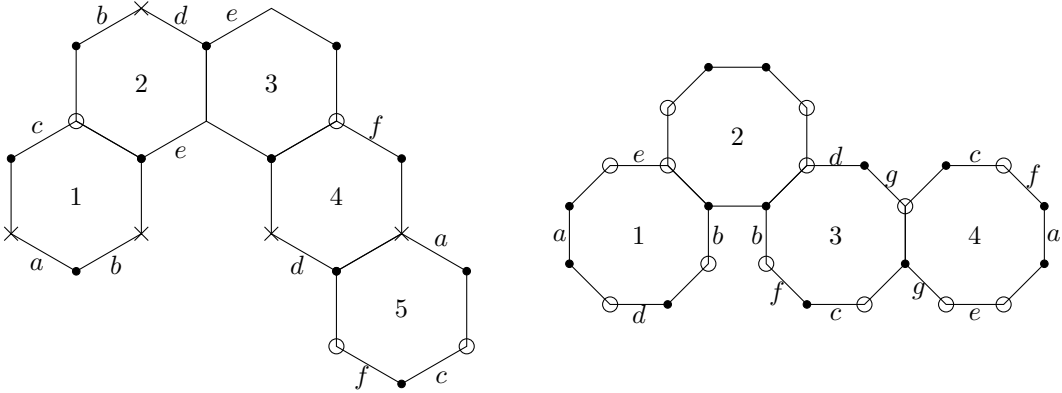
\begin{figure}[h!!]
 \begin{minipage}{0.45\textwidth}
 \centering
 \begin{tikzpicture}[hexagon/.style=
  {shape=regular polygon, regular polygon sides=6, draw, minimum width=2cm, shape border rotate=90}, rec/.style= {shape=regular polygon, regular polygon sides=4, draw, minimum width=0.1cm}]
  \node[hexagon] at (0,0) (1) {3};
  \node[hexagon] at (-180:1.73){2};
  \node[hexagon] at (-60:1.73){4};
  \node[hexagon] at (-60:2*1.73){5};
  \node[hexagon][shift={(-180:1.73)}] at (-120:1.73){1};
  
\begin{scope}[shift = {(-180:1.73)}]
 \begin{scope}[shift = {(-120:1.73)}]
  \foreach \i in {30,150,-90}{
  \draw [fill] (\i:1) circle [radius=0.05];
  }
  \node at (-60:1.05){$b$};
  \node at (-120:1.05){$a$};
  \node at (120:1.05){$c$};
  
  \foreach \i in {-30,-150}{
  \draw (\i:1) node[cross=3] {};
  
  }
  
  \end{scope}
 \end{scope}

  \begin{scope}[shift = {(-180:1.73)}]
  \foreach \i in {30,150,-90}{
  \draw [fill] (\i:1) circle [radius=0.05];
  } 
  \node at (-60:1.05){$e$};
  \node at (60:1.05){$d$};
  \node at (120:1.05){$b$};
  \foreach \i in {-150}{
  \draw (\i:1) circle [radius=0.1];
  }
  \draw (90:1) node[cross=3] {};

 \end{scope}

  \foreach \i in {30,150,-90}{
  \draw [fill] (\i:1) circle [radius=0.05];
  
  }
  
  \node at (120:1.05){$e$};
  \foreach \i in {-30}{
  \draw (\i:1) circle [radius=0.1];
  }

  
  \begin{scope}[shift = {(-60:1.73)}]
  \foreach \i in {30,150,-90}{
  \draw [fill] (\i:1) circle [radius=0.05];
  }
  \node at (-120:1.05){$d$};
  \node at (60:1.05){$f$};
  \foreach \i in {-30,-150}{
  \draw (\i:1) node[cross=3] {};
  
  }

 \end{scope}

 
 \begin{scope}[shift = {(-60:2*1.73)}]
  \foreach \i in {30,150,-90}{
  \draw [fill] (\i:1) circle [radius=0.05];
  }
  
  \node at (-60:1.05){$c$};
  \node at (60:1.05){$a$};
  
  \node at (-120:1.05){$f$};
  
  \foreach \i in {-30,-150}{
  \draw (\i:1) circle [radius=0.1];
  }
 \end{scope}
  \end{tikzpicture}
  \end{minipage}
  \begin{minipage}{0.45\textwidth}
  \centering
\begin{tikzpicture}
    [oct/.style=
  {shape=regular polygon, regular polygon sides=8, draw, minimum width=2cm, shape border rotate=90}]
  \node[oct] at (0,0) (2) {2};
  \node[oct] at (-2*22.5:1.85) {3};
  \node[oct] at (-6*22.5:1.85) {1};
  \node[oct][shift = {(-2*22.5:1.85)}] at (0: 1.85){4};
  
  \foreach \i in {3,5,-3,-5}{
  \draw [fill] (\i*22.5:1) circle [radius=0.05];
  }
  
  \foreach \i in {1,7,-1,-7}{
  \draw (\i*22.5:1) circle [radius=0.1];
  }
  
  
  \begin{scope}[shift = {(-6*22.5: 1.85)}]
  \foreach \i in {-3,7,-7}{
  \draw [fill] (\i*22.5:1) circle [radius=0.05];
  }
  \foreach \i in {-1,-5,5}{
  \draw (\i*22.5:1) circle [radius=0.1];
  }

	\node at (0:1.05) {$b$};
	\node at (180:1.05){$a$};
	\node at (90:1.05){$e$};
	\node at (-90: 1.05){$d$};

  \end{scope}
  
  
  \begin{scope}[shift = {(-2*22.5: 1.85)}]
  \foreach \i in {3,-1,-5}{
  \draw [fill] (\i*22.5:1) circle [radius=0.05];
  }
  \foreach \i in {1,-3,-7}{
  \draw (\i*22.5:1) circle [radius=0.1];
  }

  \node at (45:1.05) {$g$};
	\node at (180:1.05){$b$};
	\node at (-90:1.05){$c$};
	\node at (90: 1.05){$d$};
  \node at (-6*22.5:1.1){$f$};
  \end{scope}
  
  
  \begin{scope}[shift = {(-2*22.5: 1.85)}]
  \begin{scope}[shift = {(0*22.5:1.85)}]
  \foreach \i in {1,-1,5}{
  \draw [fill] (\i*22.5:1) circle [radius=0.05];
  }
  \foreach \i in {-3,3,-5}{
  \draw (\i*22.5:1) circle [radius=0.1];
  }
  \node at (-6*22.5:1.05) {$g$};
	\node at (0:1.05){$a$};
	\node at (-90:1.05){$e$};
	\node at (90: 1.05){$c$};
  \node at (45:1.1){$f$};
  \end{scope}
  \end{scope}

%
%
%
%
%
%
	
%
%
%

  \end{tikzpicture}  
  \end{minipage}
  
\caption{Examples of polygon-tiled surfaces. Unless otherwise indicated, sides are glued in opposite pairs. Left: A surface in $\PTS_{5,6}$ given by permutations $\sigma_1 = (1)(2,3)(4)(5)$, $\sigma_2 = (1,5,4,3,2)$ and $\sigma_3 = (1,5,4,2)(3)$. It belongs to the stratum $\calH(4,1,1)$. Right: A surface in $\PTS_{4,8}$ given by permutations $\sigma_1 = (1,3,4)(2)$, $\sigma_2 = (1)(2,3)(4)$, $\sigma_3 = (1,3,4)(2)$, $\sigma_4 = (1,2)(3,4)$. It belongs to stratum $\calH(5,5)$. The procedure to associate permutations to encode the gluings is described generally in Section \ref{sec:randmodelGEN}.}
\label{fig:hexocttiled}
\end{figure}

\subsection{Randomizing model for $\STS_n$}\label{sec:randmodel}

 Given a square-tiled surface $S \in \STS_n$, first label the squares $\{1, \dots, n\}$. We can then read off a pair of permutations $\sigma, \tau \in S_n$ which describe the horizontal and vertical gluings respectively --- $\sigma$ and $\tau$ are defined so that for any square labeled $i$, the square labeled $\sigma(i)$ has its left side glued to the right side of $i$ and $\tau(i)$ has its bottom side glued to the top of $i$. For example, from the square-tiled surface in Figure \ref{fig:sqtiled} we obtain $\sigma = (1,2,3,4)(5,6)$ and $\tau = (1,5)(2,6)(3,4)$. Relabeling the surface corresponds to simultaneous conjugation of the permutation pair by the relabeling permutation.
 
Conversely, given a pair of permutations, $(\sigma, \tau) \in S_n \times S_n$, one can associate a labeled square-tiled surface $S(\sigma, \tau) \in \STS_n$. This surface is connected if and only if the subgroup generated by the permutations $\ideal{\sigma,\tau} \subset S_n$, is a transitive subgroup. By a classical theorem of Dixon \cite{Dix}, the probability that a random pair  of permutations in $S_n$ generates a transitive subgroup is at least $1 - 1/n + O(n^{-2})$. Hence, a uniformly distributed pair of permutations in $S_n$ gives a connected surface asymptotically almost surely as $n \rightarrow \infty$. So, $S_n \times S_n$ with the uniform measure  is a combinatorial model for random STSs built out of $n$ squares.

\subsection{Genus of STSs from the randomizing model} To determine the genus of a random STS using the random model, we start with the Euler characteristic formula: 
$2-2g = V -E + F$. For a surface with $n$ squares, $F = n$ and $E=2n$ and hence the genus is given by,
$ g = \frac{n}{2}-\frac{V}{2} + 1$. So, the problem of determining the genus boils down to computing the number of equivalence classes of vertices under the gluing. Note that a vertex need not be a cone point. The following proposition from folklore gives a combinatorial condition allowing us to count the number of such equivalence classes simply from the combinatorial information.

\begin{proposition}[Combinatorial condition for identifying vertices]\label{prop:vertexgluing} Let $S\in \STS_n$ be given by $(\sigma, \tau) \in S_n \times S_n$. Let $c = \sigma\tau\sigma^{-1}\tau^{-1}$ be the commutator of $\sigma$ and $\tau$. Then, the bottom left corner of square $i$ is identified with the bottom left corner of square $j$ if and only if $i$ and $j$ are in the same cycle of $c$. Hence, the number of equivalence classes of bottom left corners is equal to the number of cycles of $c$.
\end{proposition}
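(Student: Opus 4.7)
The plan is to prove the proposition by tracing counterclockwise around a vertex and showing that a single full rotation through four adjacent squares corresponds to applying the commutator to the square label.

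First I would fix a square $i$ and consider the vertex $v$ at its bottom-left corner. At $v$, the square $i$ occupies the upper-right quadrant. I would identify the other three squares meeting $v$ in the Euclidean neighborhood by reading off the edge gluings: the square in the upper-left quadrant has its right edge glued to $i$'s left edge, hence is $\sigma^{-1}(i)$, and $v$ appears as its bottom-right corner. Similarly, the square in the lower-left quadrant (reached by going down from $\sigma^{-1}(i)$) is $\tau^{-1}\sigma^{-1}(i)$, and $v$ is its top-right corner; then the square in the lower-right quadrant is $\sigma\tau^{-1}\sigma^{-1}(i)$, with $v$ as its top-left corner. Finally, the square obtained by going up from this last one must be $\tau\sigma\tau^{-1}\sigma^{-1}(i)$, with $v$ as its bottom-left corner. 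Thus one counterclockwise loop through four adjacent squares around $v$ sends the label $i$ to $c'(i)$ where $c' = \tau\sigma\tau^{-1}\sigma^{-1} = [\sigma,\tau]^{-1}$.

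Second, I would argue that the bottom-left corners of squares identified to $v$ are precisely the orbit $\{c'^k(i) : k \in \ZZ\}$. In the direction ``orbit $\subseteq$ identifications,'' each counterclockwise loop lands on the bottom-left corner of a new square identified with $v$ (possibly after passing through a cone angle of $2\pi$ times an integer). In the direction ``identifications $\subseteq$ orbit,'' any square whose bottom-left corner equals $v$ must appear in the Euclidean fan of squares around $v$, so it is reached by some number of counterclockwise (or clockwise) $4$-square loops from $i$. Since $[\sigma,\tau]$ and $[\sigma,\tau]^{-1} = c'$ have the same cycles (as sets), the orbit of $i$ under $c'$ is exactly the cycle of $c = [\sigma,\tau]$ containing $i$.

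Summing over all vertices, the number of equivalence classes of bottom-left corners equals the number of cycles of $c$, giving the second assertion. The main things to be careful about are (a) the bookkeeping of which corner of which square meets $v$ in each quadrant, especially making sure the composition $\tau\sigma\tau^{-1}\sigma^{-1}$ (and not some other word) is the one that arises, and (b) the distinction between bottom-left corners identified to $v$ and the other corners (top-left, bottom-right, top-right) of squares meeting $v$, so that we correctly obtain cycles rather than some larger orbit. Once the quadrant-by-quadrant identification is nailed down, the proposition follows cleanly.
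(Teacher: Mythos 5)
Your proof is correct, and the forward direction is essentially the paper's argument: both trace one full turn around the vertex through four adjacent squares, reading off the gluings quadrant by quadrant. The only cosmetic difference there is orientation --- the paper goes clockwise and lands on $c(i)$ with $c=[\sigma,\tau]$, while you go counterclockwise and land on $c'(i)$ with $c'=\tau\sigma\tau^{-1}\sigma^{-1}=c^{-1}$; as you note, this is immaterial since $c$ and $c^{-1}$ have the same cycles as sets. Where you genuinely diverge is in the converse direction (``identifications $\subseteq$ orbit''). The paper proves this by induction on the cone angle $2\pi\ell$ at the vertex, using a surgery that swaps two edge identifications to produce a new surface $S'$ whose modified commutator splits off a fixed point, then invokes the inductive hypothesis on the smaller-angle vertex. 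You instead appeal to the structure of the link of $v$: the corners glued to $v$ form a single circular fan of quarter-disk sectors whose corner types cycle as (bottom-left, bottom-right, top-right, top-left) repeating, so every square with bottom-left corner at $v$ is reached by iterating the four-square loop. Your route is more direct and avoids the cut-and-reglue construction, at the cost of leaning on the (standard, but worth stating) fact that the equivalence class of a corner under the gluing-generated relation corresponds to a single connected circle in the link --- which is exactly what makes the ``Euclidean fan'' well-defined and exhaustive. The paper's induction buys a self-contained combinatorial argument that never invokes the topology of the link; your argument buys brevity and geometric transparency. Either is acceptable, but if you keep the fan argument you should make the single-circle property of the link explicit rather than implicit in the phrase ``must appear in the Euclidean fan.''
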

\begin{proof}
%
%
First label the edges of each of the squares as $e_1, \dots, e_4$ so that $e_1$ is the right vertical edge,  $e_2$ is the top edge, $e_3$  the left edge, and $e_4$ the bottom one.

($\Leftarrow$) Assume $i$ and $j$ are in the same cycle of $c$. If $i = j$ there is nothing to show. So assume $i \neq j$ and that there exists $\ell \geq 1$ such that $c^\ell (i) = j$.  We then proceed by induction on $\ell$. We start with the base case $\ell = 1$ so that $c(i) = \sigma\tau \sigma^{-1} \tau^{-1}(i) = j$. See Figure \ref{fig:vertexfromcommutatorSQR} for an illustration of the proof. By definition of $\sigma$ and $\tau$, 
\begin{align*} \text{edge } e_4 \text{ of } i \sim \text{edge }e_2 \text { of } \tau^{-1}(i) &\implies \text{bottom left corner of } i \sim \text{top left corner of }\tau^{-1}(i)\\
\text{edge } e_3 \text{ of } \tau^{-1}(i) \sim \text{edge } e_1 \text{ of } \sigma^{-1}\tau^{-1}(i) &\implies \text{top left corner of } \tau^{-1}(i) \sim \text{top right corner of }\sigma^{-1}\tau^{-1}(i)\\
\text{edge } e_2 \text{ of } \sigma^{-1}\tau^{-1}(i) \sim \text{edge } e_4 \text{ of } \tau\sigma^{-1}\tau^{-1}(i) &\implies \text{top right corner of } \sigma^{-1}\tau^{-1}(i) \\ &\hspace{1cm}\sim \text{bottom right corner of }\tau\sigma^{-1}\tau^{-1}(i)\\
\text{edge } e_1 \text{ of } \tau\sigma^{-1}\tau^{-1}(i) \sim \text{edge } e_3 \text{ of } c(i) = j &\implies \text{bottom right corner of } \tau\sigma^{-1}\tau^{-1}(i) \\ &\hspace{1cm}\sim \text{bottom left corner of }c(i)=j
\end{align*}
\begin{figure}[h!]
\begin{tikzpicture}[sq/.style=
  {shape=regular polygon, regular polygon sides=4, draw, minimum width=2*1.414cm}]
	\node[sq] at (0,0){$i$}; 
	
\foreach \edge [count = \i] in {$e_1$, $e_2$, $e_3$, $e_4$}{
\node at (\i*90-90:0.8cm){\edge};
}
	\node[sq] at (-90:2cm){$\tau^{-1}(i)$};

\node[shift = {(-90:2cm)}] at (90:0.8cm){$e_2$};
\node[shift = {(-90:2cm)}] at (180:0.8cm){$e_3$};

	\node[sq, shift = {(-90:2)}] at (180:2cm){};
	\node[shift = {(-90:2)}][scale = 0.8] at (180:2cm){$\sigma^{-1}\tau^{-1}(i)$};
	\begin{scope}[shift = {(-90:2)}]
		\begin{scope}[shift ={(180:2)}] 
	 \draw [shift={(0.05,0)}] (90:0.85cm) -- (90:1.15cm);
  \draw [shift={(-0.05,0)}] (90:0.85cm) -- (90:1.15cm);	
  \end{scope}
  \end{scope}
	
	\begin{scope}[shift = {(5, 0)}]
	\node[sq] at (0,0){};
	\node[scale=0.8] at (0,0){$\tau\sigma^{-1}\tau^{-1}(i)$};
	\draw [shift={(0.05,0)}] (-90:0.85cm) -- (-90:1.15cm);
  \draw [shift={(-0.05,0)}] (-90:0.85cm) -- (-90:1.15cm);	
	\node[sq] at (0:2){$c(i)=j$};
	 \begin{scope}
	\clip (1,-1) rectangle (-1, 1);
	
	\draw (-45:1.414) circle [radius = 0.5];

	\end{scope}
	\draw[fill] (-45:1.414) circle [radius = 0.1];

	\end{scope}
	
	\begin{scope}
	\clip (-1,-1) rectangle (1.414, 1.414);
	
	\draw (-135:1.414) circle [radius = 0.5];
	
	\end{scope}
	
	\begin{scope}
	\clip (-1,-1) rectangle (1, -3);
	
	\draw (-135:1.414) circle [radius = 0.5];
	
	\end{scope}
	
	\begin{scope}
	\clip (-1,-1) rectangle (-3, -3);
	
	\draw (-135:1.414) circle [radius = 0.5];
	
	\end{scope}
	
	\draw[fill] (-135:1.414) circle [radius = 0.1];		
	\end{tikzpicture}
\caption{Proof Illustration: Action of the commutator $c$ corresponds to going around a vertex clockwise starting at the bottom left corner of $i$ and ending at the bottom left corner of $c(i) = j$.}
\label{fig:vertexfromcommutatorSQR}
\end{figure}
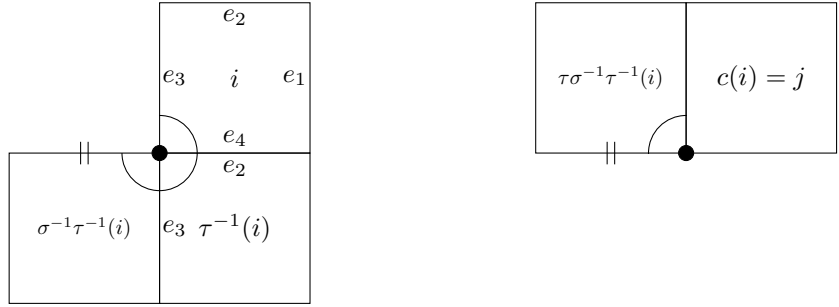
Now assume as induction hypothesis that $c^\ell(i) = j$ implies that $i$ and $j$ have their bottom left corners identified. Then, if $c^{\ell+l}(i) = c^\ell(c(i))=j$, by induction hypothesis, $c(i)$ and $j$ have their bottom left corners identified. By the base case, $i$ and $c(i)$ have their bottom left corners identified which implies the same for $i$ and $j$.

($\Rightarrow$) Now assume that $i$ and $j$ have their bottom left corners identified. Call the vertex represented by the bottom left corner of $i$ to be $v$. Let $2 \pi \ell$ be the angle around $v$. We will proceed via induction on $\ell$. 
When $\ell = 1$, $v$ is not a cone point, and is made up of the bottom left of $i$, the top left of $\tau^{-1}(i)$, the top right of $\sigma^{-1}\tau^{-1}(i)$ and the bottom right of $\tau\sigma^{-1}\tau^{-1}(i)$ each of which contribute $\pi/2$ angle. This implies that $i = j$, and hence $i$ and $j$ are in the same cycle of $c$ vacuously.
As induction hypothesis, assume that for any STS, $S(\sigma', \tau')$, whenever $i$ and $j$ have their bottom left corners identified and the angle at the vertex represented is $2\pi \ell$ then $i$ and $j$ are in the same cycle of the commutator $[\sigma', \tau']$. 
Now, assume the angle around $v$ is $2 \pi (\ell+1)$. Let $(i, c(i), \dots, c^r(i))$ be the cycle of $c$ containing $i$. Note that by definition, edge $e_1$ of $\tau\sigma^{-1}\tau^{-1}(i)$ is identified with edge $e_3$ of $c(i)$ and edge $e_1$ of $\sigma^{-1}(i)$ is identified with edge $e_3$ of $i$. Then define a new surface $S'$ from $S$ with all the same edge gluings except swapping these two edge identifications --- now edge $e_1$ of $\tau\sigma^{-1}\tau^{-1}(i)$ is identified to edge $e_3$ of $i$ and edge $e_1$ of $\sigma^{-1}(i)$ is identified to edge $e_3$ of $c(i)$. This new surface has a new commutator $c'$ such that $c'(i) = i$ and $(c(i), \dots, c^r(i))$ is a cycle. But on $S'$, $c(i)$ and $j$ have their bottom left corners identified, and the angle around the vertex represented by these corners is $2 \pi \ell$. Hence, by induction hypothesis, $c(i)$ and $j$ are in the same cycle of $c'$. But, that means $c^m(i) = j$ for some $1\leq m\leq r$.\end{proof}

Proposition \ref{prop:vertexgluing} then allows us to conclude the following regarding the angle around the vertices. 

\begin{proposition}[Commutator cycle type determines cone point data]\label{prop:TopFromCommutator} Let $S \in \STS_n$ be given by $(\sigma, \tau) \in S_n \times S_n$. If the cycle type of $c := [\sigma, \tau]$ is $(1^{\xi_1}, 2^{\xi_2}, \dots, n^{\xi_n})$, then $S$ has $\xi_\ell$ vertices with angle $2\pi\ell$ for each $\ell = 1, \dots, n$.
\end{proposition}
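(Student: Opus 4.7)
The plan is to leverage Proposition \ref{prop:vertexgluing}, which already establishes a correspondence between the cycles of $c=[\sigma,\tau]$ and equivalence classes of bottom-left corners of squares. The additional content of Proposition \ref{prop:TopFromCommutator} is the claim that a cycle of length $\ell$ produces a vertex whose cone angle is exactly $2\pi\ell$. I would extract this directly from the geometric picture used in the proof of Proposition \ref{prop:vertexgluing}: one application of the commutator $c$ transports us from the bottom-left (BL) corner of $i$ to the BL corner of $c(i)$ by sweeping clockwise through exactly four square-corners meeting at a common vertex --- BL of $i$, TL of $\tau^{-1}(i)$, TR of $\sigma^{-1}\tau^{-1}(i)$, and BR of $\tau\sigma^{-1}\tau^{-1}(i)$ --- each contributing $\pi/2$, so a single step of $c$ sweeps out exactly $2\pi$ of angular measure around the vertex.

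First I would observe that every vertex of $S$ is witnessed by some BL corner, so that the correspondence in Proposition \ref{prop:vertexgluing} really is a bijection between cycles of $c$ and vertices of $S$. This follows because, by the geometric step above, as we walk clockwise around any vertex the corner types visited must cycle through $(\mathrm{BL},\mathrm{TL},\mathrm{TR},\mathrm{BR})$ with period $4$. In particular, the total angle at any vertex is a multiple of $2\pi$, and the numbers of BL, TL, TR, and BR corners meeting at a given vertex are all equal.

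Next, fix a cycle $(i,c(i),\ldots,c^{\ell-1}(i))$ of $c$ of length $\ell$ and let $v$ be the associated vertex. By the clockwise sweeping picture, each application of $c$ contributes $2\pi$ to the angle measured around $v$ and advances us to the next BL corner in the cycle, and we return to the starting BL corner after exactly $\ell$ applications. Hence the total cone angle at $v$ is $2\pi\ell$. Since the cycle type $(1^{\xi_1},\ldots,n^{\xi_n})$ of $c$ records $\xi_\ell$ cycles of length $\ell$, we obtain exactly $\xi_\ell$ vertices of cone angle $2\pi\ell$, as claimed.

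The only mild obstacle I anticipate is the bijection step --- verifying that every vertex is witnessed by some BL corner, so that no vertex of $S$ is missed. But this is immediate from the period-$4$ alternation of corner types around a vertex, and once it is noted the rest of the argument is a direct reading of the commutator geometry that was already carried out in the proof of Proposition \ref{prop:vertexgluing}.
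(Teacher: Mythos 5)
Your proposal is correct and follows essentially the same route as the paper: both invoke Proposition \ref{prop:vertexgluing} for the cycle--vertex correspondence and then observe that one application of $c$ sweeps exactly $2\pi$ around the vertex, so a length-$\ell$ cycle yields angle $2\pi\ell$. Your additional remark that every vertex is witnessed by a bottom-left corner (via the period-$4$ alternation of corner types) is a worthwhile explicit justification of a point the paper leaves implicit, but it does not change the argument.
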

\begin{proof}Consider a cycle of $c$ and let $i$ be in this cycle. From Proposition \ref{prop:vertexgluing}, the bottom left corner of $i$ represents the vertex corresponding to this cycle. Furthermore, as illustrated in figure \ref{fig:vertexfromcommutatorSQR}, following the action of $c$ from $i$ to $c(i)$ corresponds to traversing angle $2\pi$ around this vertex. Hence, if the cycle containing $i$ is of length $\ell$, then the angle around the corresponding vertex is $2\pi\ell$.
\end{proof}

Note that Proposition \ref{prop:TopFromCommutator} allows us to determine the stratum of the surface simply from the combinatorics of the permutations describing it. For instance, if the commutator of the two permutations describing a connected square-tiled surface is a product of $m$ disjoint transpositions, then the stratum is $\calH(\overbrace{1,\dots, 1}^m)$. See Figure \ref{fig:topfromcommexamples} for more illustrations of Proposition \ref{prop:TopFromCommutator}.

\begin{figure}[h!]
\begin{minipage}{0.45\textwidth}
\flushleft
\begin{tikzpicture}[sq/.style=
  {shape=regular polygon, regular polygon sides=4, draw, minimum width=1.414cm}]
  \node[sq] at (0,0){$1$};
  \node[sq] at (1,0){$2$};
  \node[sq] at (2,0){$3$}; 
  \node[sq] at (3,0){$4$};
  \node[sq] at (3,1){$5$};
  \node[sq] at (4,1){$6$};
  
  \node at (1, 0.65) {$a$};
  \node at (3, -0.65) {$a$};
  
   \node at (2, 0.65) {$b$};
  \node at (1, -0.65) {$b$};

   \node at (3, 1+0.65) {$c$};
  \node at (4, 1-0.65) {$c$};

   \node at (4, 1+0.65) {$d$};
  \node at (2, -0.65) {$d$};
  
  \foreach \i in {1, 2, 3, 4}{
  
  \draw[fill] (\i*90-45:1.414/2) circle [radius =0.05]; 
  }
  
   \foreach \i in {1, 3, 4}{
  
  \draw[fill, shift={(3,0)}] (\i*90-45:1.414/2) circle [radius =0.05]; 
  }
  \draw[fill, shift={(4,1)}] (45:1.414/2) circle [radius =0.05]; 
  \draw[fill, shift={(3,1)}] (90+45:1.414/2) circle [radius =0.05]; 
  \draw[fill, shift={(2,0)}] (90+45:1.414/2) circle [radius =0.05];

  \draw[shift ={(2,0)}] (45:1.414/2) node[cross=3]{}; 
   \draw[shift ={(2,0)}] (-135:1.414/2) node[cross=3]{}; 
   \draw[shift ={(4,1)}] (135:1.414/2) node[cross=3]{};
    \draw[shift ={(4,1)}] (-45:1.414/2) node[cross=3]{};

\end{tikzpicture}
\end{minipage}
\begin{minipage}{0.45\textwidth}
\flushright
\begin{tikzpicture}[sq/.style=
  {shape=regular polygon, regular polygon sides=4, draw, minimum width=1.414cm}]
  \node[sq] at (0,1){$1$};
  \node[sq] at (1,1){$2$};
  \node[sq] at (1,0){$3$}; 
  \node[sq] at (2,0){$4$};
  \node[sq] at (3,0){$5$};
  \node[sq] at (4,0){$6$};
  \node[sq] at (5,0){$7$};
  \node[sq] at (6,0){$8$};

  
  \node at (0, 1+0.65){$a$};
  \node at (1,-0.65){$a$};
  
  \node at (1, 1+0.65){$b$};
  \node at (4,-0.65){$b$};
  
  \node at (2, 0.65){$c$};
  \node at (5,-0.65){$c$};

  \node at (3, 0.65){$d$};
  \node at (6,-0.65){$d$};
  
  \node at (4, 0.65){$e$};
  \node at (0,1-0.65){$e$};
  
  \node at (5, 0.65){$f$};
  \node at (3,-0.65){$f$};
  
  \node at (6, 0.65){$g$};
  \node at (2,-0.65){$g$};'
  

  \draw[fill, shift={(0,1)}] (135:1.414/2) circle [radius = 0.05];
  \draw[fill, shift={(0,1)}] (-135:1.414/2) circle [radius = 0.05];
  
  \draw[fill, shift={(1,1)}] (45:1.414/2) circle [radius = 0.05];
  
  \draw[fill, shift={(1,1)}] (-45:1.414/2) circle [radius = 0.05];
  
  \draw[fill, shift={(1,0)}] (-135:1.414/2) circle [radius = 0.05];
  
  \draw[fill, shift={(3,0)}] (45:1.414/2) circle [radius = 0.05];
  
  \draw[fill, shift={(4,0)}] (-45:1.414/2) circle [radius = 0.05];
  
  \draw[fill, shift={(6,0)}] (-45:1.414/2) circle [radius = 0.05];

  
  \draw[shift ={(1,1)}] (-135:1.414/2) node[cross=3]{}; 
  \draw[shift ={(2,0)}] (-45:1.414/2) node[cross=3]{}; 
  \draw[shift ={(4,0)}] (45:1.414/2) node[cross=3]{};  
  \draw[shift ={(6,0)}] (45:1.414/2) node[cross=3]{}; 
  
  
  \draw[shift={(6,0)}] (135:1.414/2) circle [radius = 0.08];
  
  \draw[shift={(1,0)}] (-45:1.414/2) circle [radius = 0.08];
  \draw[shift={(1,1)}] (135:1.414/2) circle [radius = 0.08];
  \draw[shift={(4,0)}] (-135:1.414/2) circle [radius = 0.08];

\end{tikzpicture}
\end{minipage}
\caption{Illustrating Proposition \ref{prop:TopFromCommutator}. Left:
A square-tiled surface in $\STS_6$ given by permutations $\sigma = (1, 2, 3, 4)(5,6)$ and $\tau = (1)(2,4,5,6,3)$. The commutator is $[\sigma, \tau] =(1,6,4,2)(3,5)$, indicating that the surface is in $\calH(3,1)$. Right: A square-tiled surface in $\STS_8$ given by permutations $\sigma = (1,2)(3,4,5,6,7,8)$ and $\tau = (1,3,2,6)(4,7,5,8)$. The commutator is $[\sigma, \tau] =(1,3,7)(2,5)(4,6)(8)$ indicating that the surface is in $\calH(2,1,1)$. } 
\label{fig:topfromcommexamples}
\end{figure}
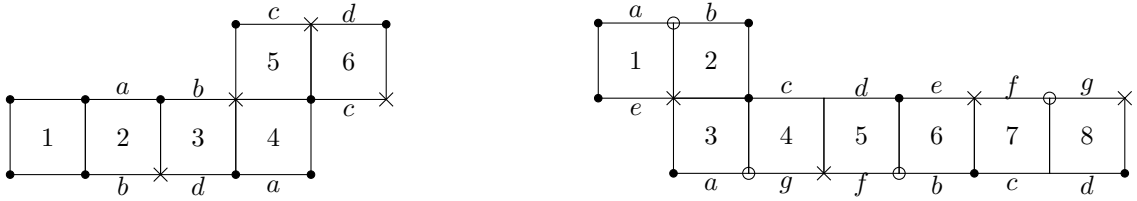

Since the commutator captures information regarding the cone point data of an STS, consider the commutator map $w_c:  S_n \times S_n \rightarrow S_n$ so that $w_c(\sigma, \tau):=[\sigma, \tau]$. Given a random STS, $(\sigma, \tau) \in S_n\times S_n$, using Proposition \ref{prop:vertexgluing} we see that the number of vertices $V$ equals the number of cycles of the permutation  $w_c(\sigma, \tau)$. Hence, we define the random variable
\begin{equation}\label{eq:numcycles}C_{w_c, n} = \# \text{ cycles of }w_c(\sigma,\tau)
\end{equation}
where $(\sigma, \tau) \in S_n \times S_n$ is uniformly distributed. 

With that we can then define $G_n: S_n \times S_n \rightarrow \NN$ to be the random variable
$$ G_n = \frac{n}{2} - \frac{C_{w_c, n}}{2} + 1 $$
which outputs $1 - \chi/2$ where $\chi$ is the Euler characteristic. Note that $G_n$ is exactly the genus when the surface is connected. 

 The map $w_c: S_n \times S_n \rightarrow S_n$ induces a probability measure $P_{w_c, n}$ on $S_n$ such that 
$ P_{w_c, n} (\pi) = \frac{\# w_c^{-1}(\pi)}{(n!)^2}.$ Frobenius \cite{Frob} showed that the probability measure $P_{w_c,n}$ takes the special form,
\begin{equation}\label{eqn:probspecialform}P_{w_c,n}(\pi) = \frac{1}{n!}\sum_{\chi \in \Irr(S_n)}\frac{\chi(\pi)}{\chi(1)}
\end{equation}
As the image of the map $w_c: S_n \times S_n \rightarrow S_n$ is $A_n$ \cite{Ore}, the probability measure $P_{w_c,n}$ is supported on $A_n$.


\subsection{Holonomy from the randomizing model}
The random model also gives information on the geometry of STSs. In particular, using Proposition \ref{prop:althol} and Proposition \ref{prop:TopFromCommutator} we can get a combinatorial characterization of holonomy tori.
\begin{proposition}\label{prop:HolFromComb}Let $S:=S(\sigma, \tau)$ be a square-tiled surface. Then $S$ is a holonomy torus if and only if either $[\sigma, \tau]$ is a derangement or each of its fixed points is also fixed by $\sigma$ and $\tau$.
\end{proposition}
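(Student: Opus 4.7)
The plan is to combine Proposition~\ref{prop:althol}, which characterizes holonomy tori as STSs in which every corner of every square is either a singularity or a non-singular point of a square torus ($\TT$) component, with Proposition~\ref{prop:TopFromCommutator}, which identifies the singular vertices from the cycle type of $c := [\sigma,\tau]$. Together these convert the geometric condition defining a holonomy torus into a purely combinatorial condition on fixed points of $c$.

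First, by Proposition~\ref{prop:TopFromCommutator}, the vertex corresponding to a cycle of $c$ of length $\ell$ has angle $2\pi\ell$ and so is a singularity exactly when $\ell \geq 2$; the non-singular vertices of $S$ are therefore in bijection with the fixed points of $c$. Combined with Proposition~\ref{prop:vertexgluing}, which shows that every corner of every square is equivalent to the bottom-left corner of some square, this means every corner of $S$ is a singularity precisely when $c$ is a derangement, which via Proposition~\ref{prop:althol} immediately handles that branch of the stated alternative. For the remaining case, let $i$ be a fixed point of $c$. I would then observe that the connected component of $S$ containing square $i$ is the $\langle \sigma,\tau\rangle$-orbit of $i$, and that this component equals the standard square torus $\TT$ precisely when the orbit is the singleton $\{i\}$, i.e.\ when $\sigma(i) = \tau(i) = i$. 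Conversely, whenever $\sigma(i) = \tau(i) = i$ the commutator automatically satisfies $c(i) = i$ and the component of $i$ is a one-square torus whose (only) vertex is non-singular.

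Assembling these observations via Proposition~\ref{prop:althol} yields both directions: $S$ is a holonomy torus iff every fixed point of $c$ is also a fixed point of both $\sigma$ and $\tau$, which is exactly the displayed alternative (the derangement case being the vacuous subcase in which $c$ has no fixed points at all). The only bookkeeping point requiring care is that Proposition~\ref{prop:althol} is stated for \emph{every} corner of every square, not just the bottom-left ones; this is handled cleanly by Proposition~\ref{prop:vertexgluing}, which reduces every corner to the bottom-left corner of some square, so no genuinely difficult step remains in the argument.
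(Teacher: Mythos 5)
Your proposal is correct and follows essentially the same route as the paper's proof: both reduce the geometric condition of Proposition~\ref{prop:althol} to a combinatorial one via Propositions~\ref{prop:vertexgluing} and~\ref{prop:TopFromCommutator}, identify non-singular vertices with fixed points of $[\sigma,\tau]$, and handle those by observing that a square torus component consists of a single square, forcing $\sigma(i)=\tau(i)=i$. Your packaging of the two alternatives into the single statement ``every fixed point of $c$ is fixed by $\sigma$ and $\tau$'' (with the derangement case as the vacuous subcase) is a harmless cosmetic difference.
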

\begin{proof}
Assume $S$ is a holonomy torus and that $[\sigma, \tau]$ is not a derangement. Let $i$ be a fixed point of $[\sigma, \tau]$. As a consequence of Propositions \ref{prop:vertexgluing} and \ref{prop:TopFromCommutator}, the bottom left corner of square $i$ is not a singularity. By Proposition \ref{prop:althol}, the bottom left corner of $i$ is a point in a square torus component. The only way this can happen is if the square $i$ itself belongs to a square torus component. But a square torus component contains only one square. Hence, $i$ should have its opposite sides glued in pairs, implying $\sigma(i) = i$ and $\tau(i) = i$.

For the converse, first note that every corner of every square is either itself a bottom left corner or is glued to the bottom left corner of some square. If $[\sigma, \tau]$ is a derangement, then by Propositions \ref{prop:vertexgluing} and  \ref{prop:TopFromCommutator} we know every bottom left corner of every square is a singularity which implies that $S(\sigma, \tau)$ is a holonomy torus by Proposition \ref{prop:althol}.
In case $[\sigma, \tau]$ has fixed points, by Proposition \ref{prop:TopFromCommutator}, we know the fixed points are in one-to-one correspondence with nonsingular vertices of $S$ (i.e. points of $S$ arising from gluing exactly 4 square corners). Assuming then each fixed point $i$ of $[\sigma, \tau]$ is also a fixed point of $\sigma$ and $\tau$, we know that $i$ forms a $\TT$ component, and the bottom left corner of $i$ is surely in this component. Hence, every corner of every square is either a singularity or a point in a $\TT$ component. By Proposition \ref{prop:althol}, $S$ is a holonomy torus. \end{proof}

\subsection{Permutation statistics}We will need some results on statistics of permutations as well. We begin with the classically known asymptotic density (as $n\rightarrow \infty$) of permutations in $S_n$ without fixed points.

\begin{proposition}\label{prop:derangementAn}
The probability that a uniformly random permutation in $A_n$ (or $S_n)$ does not have a fixed point tends to $1/e$ as $n \rightarrow \infty$ whereas the expected number of fixed points is 1. 
\end{proposition}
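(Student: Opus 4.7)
The plan is to handle the $S_n$ case by classical inclusion-exclusion, then pass to $A_n$ via a sign-weighted generating function argument showing that the restriction perturbs the derangement proportion by a negligible amount.

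For $S_n$, inclusion-exclusion on fixed-point subsets gives the number of derangements as $d_n = n!\sum_{k=0}^{n}(-1)^k/k!$, so $d_n/n! \to 1/e$. The expected fixed point count is exactly $1$ by linearity of expectation applied to the $n$ events $\{\pi(i) = i\}$, each of probability $1/n$.

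For $A_n$, I will compute a sign-weighted derangement count via exponential generating functions. Let $d_n^\pm$ denote the number of derangements in $S_n$ of each parity. Since the sign of $\pi$ is the product of $(-1)^{\ell_i - 1}$ over its cycles, the exponential formula gives
$$\sum_{n \geq 0}(d_n^+ - d_n^-)\frac{x^n}{n!} \;=\; \exp\!\left(\sum_{k \geq 2}(-1)^{k-1}\frac{x^k}{k}\right) \;=\; \exp(\log(1+x) - x) \;=\; (1+x)e^{-x},$$
whose coefficient of $x^n/n!$ is $(1-n)(-1)^n$. Combined with $d_n^+ + d_n^- = d_n$ and $|A_n| = n!/2$, this yields
$$\frac{d_n^+}{|A_n|} \;=\; \frac{d_n^+}{n!/2} \;=\; \frac{d_n}{n!} + O\!\left(\frac{n}{n!}\right) \;\longrightarrow\; \frac{1}{e}.$$
For the expected fixed point count in $A_n$, linearity of expectation again reduces the problem to counting $\pi \in A_n$ with $\pi(i) = i$. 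Since the parity of such $\pi$ matches the parity of its restriction to the complement of $\{i\}$, for $n \geq 3$ this count is $|A_{n-1}| = (n-1)!/2$; summing over $i$ gives a total of $n \cdot (n-1)!/2 = n!/2 = |A_n|$, hence expected value exactly $1$.

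The only (minor) obstacle is verifying that the sign-weighted correction in the $A_n$ derangement proportion is genuinely negligible; the explicit formula $d_n^+ - d_n^- = (1-n)(-1)^n$ makes this immediate, being $O(n)$ against a main term $d_n \sim n!/e$. All remaining steps are routine manipulations of the resulting identities.
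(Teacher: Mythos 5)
Your proposal is correct, and it is both more careful and more complete than the argument the paper actually gives. The paper only sketches the $S_n$ case, asserting that since each point is moved by a proportion $1-1/n$ of permutations, the proportion moving all points is $(1-1/n)^n$; this tacitly treats the fixed-point events as independent, which they are not (the exact derangement proportion is $\sum_{k=0}^{n}(-1)^k/k!$, not $(1-1/n)^n$, though both tend to $1/e$). The paper also does not address the $A_n$ case or the expected-value claim at all. You supply exactly the missing pieces: the inclusion--exclusion formula for $d_n$, the signed derangement count $d_n^+-d_n^-=(1-n)(-1)^n$ via the exponential formula applied to $\exp\bigl(\log(1+x)-x\bigr)=(1+x)e^{-x}$, which shows the parity bias among derangements is only $O(n)$ against a main term of order $n!/e$, and the observation that for $n\ge 3$ the permutations in $A_n$ fixing a given point are in bijection with $A_{n-1}$, giving expectation exactly $1$ in $A_n$ as well as in $S_n$. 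What your route buys is a rigorous and quantitative treatment of the alternating-group restriction, which is the version of the proposition actually used later (in the proof of the Holonomy Theorem, where the commutator is $P_{w_c,n}$-distributed on $A_n$); what the paper's sketch buys is brevity, at the cost of an intermediate claim that is literally false for finite $n$.
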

The probability of a random permutation in $S_n$ not having a fixed point is easy to compute. For any $k \in \{1, \dots, n\}$, the proportion of permutations moving it is exactly $1-1/n$. So, the proportion of permutations moving all of the $\{1, \dots, n\}$ is $(1-1/n)^n$. This converges to $1/e$ as $n \rightarrow \infty$.  

Similarly,

\begin{proposition}\label{prop:transpositions}The probability that a uniformly distributed random permutation from $A_n$ is a product of disjoint transpositions (and 1-cycles) tends to 0 as $n \rightarrow \infty$, and the probability that it is a cyclic permutation also tends to $0$ as $n \rightarrow \infty$. 
\end{proposition}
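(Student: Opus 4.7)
The plan is to treat the two statements separately by first counting the relevant permutations inside $S_n$ and then transferring to $A_n$. Since $|A_n| = n!/2$, for any subset $T \subseteq S_n$ the density of $T$ in $A_n$ is at most $2|T|/n!$, so it suffices in each case to show $|T|/n! \to 0$.

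First I would dispatch the cyclic permutation case, which is essentially immediate from enumeration. The number of $n$-cycles in $S_n$ is $(n-1)!$, so their density in $S_n$ is $1/n$ and their density in $A_n$ is at most $2/n \to 0$. One can be sharper: an $n$-cycle is a product of $n-1$ transpositions and so lies in $A_n$ if and only if $n$ is odd, giving density $0$ when $n$ is even and exactly $2/n$ when $n$ is odd. Either way the limit is $0$.

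For the involutions part (products of disjoint transpositions and fixed points), I would enumerate involutions in $S_n$ by first choosing $k$ unordered pairs from $\{1,\dots,n\}$, for $0 \le k \le \lfloor n/2\rfloor$, yielding
$$I(n) \;=\; \sum_{k=0}^{\lfloor n/2\rfloor} \frac{n!}{2^k\, k!\, (n-2k)!}.$$
To show $I(n)/n! \to 0$, I would split the sum at $k_0 = \lfloor n/4\rfloor$. For $k \le k_0$ one has $(n-2k)! \ge \lfloor n/2\rfloor!$, so each term in $I(n)/n!$ is at most $1/\lfloor n/2\rfloor!$; for $k > k_0$ one has $2^k k! \ge 2^{n/4}\lfloor n/4\rfloor!$, so each term is at most $1/(2^{n/4}\lfloor n/4\rfloor!)$. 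In both ranges there are $O(n)$ terms, and each range contributes a quantity that decays faster than any polynomial, so $I(n)/n! \to 0$. The density of products of disjoint transpositions in $A_n$ is then bounded by $2I(n)/n! \to 0$.

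The only genuine calculation is the estimate $I(n) = o(n!)$; everything else is bookkeeping. I do not anticipate a serious obstacle, since the $\tfrac{n}{2}\log n$ growth of $\log(n!)$ dwarfs the contribution of the involution sum. If a cleaner write-up is preferred, one can instead cite the classical Chowla--Herstein--Moore asymptotic $I(n) \sim \tfrac{1}{\sqrt 2}\, n^{n/2} e^{-n/2 + \sqrt n - 1/4}$ and combine with Stirling to get $I(n)/n! \to 0$ directly.
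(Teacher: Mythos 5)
Your involution argument is correct and takes a genuinely different route from the paper's: the paper sums the cycle-type probabilities $2\prod_j (1/j)^{\xi_j}\cdot\frac{1}{\xi_j!}$ over the types $(1^k, 2^{(n-k)/2})$, massages the resulting series into the same form as its cycle estimate, and then cites an OEIS asymptotic for $n!\sum_{k}\frac{1}{k!}\frac{1}{n-k}$; your direct count of all involutions in $S_n$ followed by a split of the sum at $k_0=\lfloor n/4\rfloor$ is self-contained, avoids the external reference, and the transfer to $A_n$ via the factor $2$ is exactly right. That half of the proposal I would accept as written.

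The gap is in the cyclic-permutation half. You read ``cyclic permutation'' as ``$n$-cycle,'' but the paper's own proof --- and the downstream application in Proposition \ref{prop:principal}, where a minimal stratum corresponds to a commutator with exactly one cone point, i.e.\ exactly one cycle of length at least $2$ together with arbitrarily many fixed points --- takes the class in question to be all permutations of cycle type $(1^k,(n-k)^1)$ for $0\leq k\leq n-2$. This is a strictly larger set than the set of $n$-cycles: its density in $S_n$ is $\sum_{k=0}^{n-2}\frac{1}{k!}\cdot\frac{1}{n-k}\sim e/n$ rather than $1/n$, and your $(n-1)!$ enumeration misses every member that has a fixed point. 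The conclusion still holds and is recoverable by the same splitting trick you already use for involutions (for $k\leq n/2$ bound $\frac{1}{n-k}\leq\frac{2}{n}$ and use $\sum_k 1/k!\leq e$; for $k>n/2$ bound $\frac{1}{k!}\leq\frac{1}{\lceil n/2\rceil!}$), so the repair is a few lines, but as written the enumeration does not cover the class of permutations the proposition is actually invoked for.
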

\begin{proof}
Given a cycle type $\lambda = (1^{\xi_1}, 2^{\xi_2}, \dots, n^{\xi_n})$ of permutations with $\xi_i$ cycles of length $i$, it is classically known that the probability that a uniformly distributed permutation in $S_n$ has cycle type $\lambda$ is given by,
$$ \Pr[\pi \in S_n \text{ has cycle type } \lambda] =  \prod_{j=1}^n \left(\frac{1}{j}\right)^{\xi_j}\cdot \frac{1}{\xi_j!}$$
If $\lambda$ is a valid cycle type for an even permutation, then 
$$\Pr [\eta \in A_n \text{ has cycle type } \lambda ] = 2 \prod_{j=1}^n \left(\frac{1}{j}\right)^{\xi_j}\cdot \frac{1}{\xi_j!}.$$
Hence,
$$\Pr[\eta \in A_n \text{ is a cycle}] = 2\sum_{\substack{k=0 \\ k \not\equiv n \mod 2}}^{n-1} \frac{1}{k!} \cdot \left(\frac{1}{n-k}\right)$$
which is obtained by summing over the cycle types $(1^k,2^0, \dots, (n-k)^1, \dots, n^0)$ with $k$ fixed points and an $(n-k)$-cycle. Clearly, this series is bounded above by
$$ 2 \cdot \sum_{k=0}^{n-1} \frac{1}{k!}\frac{1}{n-k}.$$
According to the OEIS \cite{OEIS}, the series $n!\sum_{k=0}^{n-1} \frac{1}{k!}\frac{1}{n-k} \sim e\cdot(n-1)!$. This implies 
$$\Pr[\eta \in A_n \text{ is a cyclic permutation}] \rightarrow 0.$$
Similarly, 
$$\Pr[\eta \in A_n \text{ is a product of disjoint transpositions}] = 2\sum_{\substack{k=0 \\ k \equiv n \mod 4}}^{n-4} \frac{1}{k!} \cdot \frac{1}{\left(\frac{n-k}{2}\right)!} \cdot \left(\frac{1}{2}\right)^{\frac{n-k}{2}}$$
which is obtained by summing over the cycle types $(1^k, 2^{(n-k)/2}, 3^0, \dots, n^0)$ with $k$ fixed points and $(n-k)/2$ disjoint transpositions. Note that when $k \equiv n \mod 4$ and $k \leq n-4$, $(n-k)/2 \geq 2$ so that
$$ \left(\frac{n-k}{2}\right)!\cdot 2^{(n-k)/2} \geq 4\cdot\left(\frac{n-k}{2}\right)! \geq 4\cdot\frac{n-k}{2} \geq n-k.$$
Hence,
$$\Pr[\eta \in A_n \text{ is a product of disjoint transpositions}] \leq 2 \cdot \sum_{k=0}^{n-1} \frac{1}{k!}\frac{1}{n-k} \rightarrow 0$$
as $n \rightarrow \infty$. 
\end{proof}

Next, define the random variables  
$$C_{\eta, n} = \# \text{ cycles of }\eta; \qquad C_{\pi, n} = \# \text{ cycles of }\pi$$
for $\eta \in A_n$ and $\pi \in S_n$ uniformly distributed. We will use the following result by Fleming-Pippenger which gives large deviation statistics for $C_{\eta, n}$.

\begin{lemma}[Fleming-Pippenger \cite{FlemPip}]\label{lem:largedevunif}
Let $\eta$ be uniformly distributed on $A_n$ for $n \geq 3$. Then, 
$$\Pr(C_{\eta, n} \geq t) = O\left(\frac{n}{2^t}\right)$$
\end{lemma}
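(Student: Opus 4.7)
The plan is to bound the moment generating function $\EE[2^{C_{\pi,n}}]$ for uniformly random $\pi \in S_n$, apply Markov's inequality, and then transfer the resulting tail bound from $S_n$ to $A_n$ by using that $A_n$ has index 2 in $S_n$.

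First I would record the classical generating function identity for cycles in the symmetric group: the Stirling numbers of the first kind satisfy
\[
\sum_{\pi \in S_n} x^{C_{\pi,n}(\pi)} \;=\; x(x+1)(x+2)\cdots(x+n-1),
\]
which is the standard factorization obtained from the ``cycle structure tree'' / Chinese restaurant coupling that writes $C_{\pi,n}$ in distribution as $\sum_{i=1}^n X_i$ with independent $X_i \sim \mathrm{Bernoulli}(1/i)$. Specializing at $x=2$ gives
\[
\EE\bigl[2^{C_{\pi,n}}\bigr] \;=\; \frac{1}{n!}\cdot 2\cdot 3\cdots (n+1) \;=\; n+1.
\]

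Second, I would apply Markov's inequality to the nonnegative random variable $2^{C_{\pi,n}}$:
\[
\Pr\bigl[C_{\pi,n} \geq t\bigr] \;=\; \Pr\bigl[2^{C_{\pi,n}} \geq 2^{t}\bigr] \;\leq\; \frac{\EE[2^{C_{\pi,n}}]}{2^{t}} \;=\; \frac{n+1}{2^{t}}.
\]

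Third, I would transfer this bound to the alternating group. Since $|A_n| = |S_n|/2$, for any event $E \subset S_n$ we have $\Pr_{A_n}[E\cap A_n] \leq 2\Pr_{S_n}[E]$. Applied to $E = \{C_{\pi,n} \geq t\}$, this yields
\[
\Pr\bigl[C_{\eta, n}\geq t\bigr] \;\leq\; 2\cdot\frac{n+1}{2^{t}} \;=\; O\!\left(\frac{n}{2^{t}}\right),
\]
as required. I do not anticipate a real obstacle: the only subtlety is the (standard) factorization of the cycle-indicator generating function, and once that is in hand, everything else is Markov plus an index-2 comparison. If one preferred to avoid citing the generating function, the independent Bernoulli decomposition of $C_{\pi,n}$ gives the same bound directly since $\EE[2^{\sum_i X_i}] = \prod_{i=1}^n (1 + 1/i) = n+1$.
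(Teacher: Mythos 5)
Your proof is correct. Note that the paper does not actually prove this lemma; it is quoted from Fleming--Pippenger, so there is no in-paper argument to compare against line by line. However, your strategy --- evaluate the cycle-counting generating function at $2$, apply Markov's inequality to $2^{C_{\pi,n}}$, and pay a factor of $2$ to pass from $S_n$ to the index-$2$ subgroup $A_n$ --- is exactly the technique the paper itself uses to prove the analogous Lemma \ref{lem:largedev} for $C_{w_c,n}$ (there the probability generating function $g(q)$ is bounded at $q=2$ and one uses $\Pr(C_{w_c,n}\geq t)\leq g(q)/q^t$). All the individual steps check out: $\sum_{\pi\in S_n}x^{C_{\pi,n}}=x(x+1)\cdots(x+n-1)$ gives $\EE[2^{C_{\pi,n}}]=(n+1)!/n!=n+1$, Markov gives $(n+1)/2^t$ over $S_n$, and the inequality $\Pr_{A_n}[E\cap A_n]\leq 2\Pr_{S_n}[E]$ is immediate from $|A_n|=|S_n|/2$, yielding $2(n+1)/2^t=O(n/2^t)$. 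So your argument is a clean, self-contained replacement for the citation, and arguably more elementary than invoking the external reference.
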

 We will also utilize the following classically known statistics of $C_{\eta, n}$ and $C_{\pi, n}$ (which can be found in \cite{FlemPip} in more generality).
\begin{proposition}\label{prop:expAnSn} For $X = C_{\eta, n}$ or $C_{\pi, n}$ defined above, we have
$$\EE(X) = \log(n) + \gamma + o(1); \qquad 
\Var(X) = \log(n) + \gamma - \frac{\pi^2}{6} + o(1)$$
where $\gamma \approx 0.577$ is the Euler-Mascheroni constant.
\end{proposition}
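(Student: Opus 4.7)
The plan is to first establish the result for $S_n$ via the classical representation of the cycle count as a sum of independent Bernoulli indicators, and then transfer the asymptotics to $A_n$ using the sign character as a distinguishing function.

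For $S_n$, I would use the Feller coupling: a uniformly random $\pi \in S_n$ can be generated element by element in such a way that the cycle count $C_{\pi,n}$ is distributed as $\sum_{i=1}^n B_i$, where $B_1,\dots,B_n$ are independent Bernoulli variables with $\Pr[B_i = 1] = 1/i$ (concretely, $B_i$ indicates the event that a new cycle is closed when $i$ elements remain unassigned). Linearity and independence then yield
$$\EE[C_{\pi,n}] = \sum_{i=1}^n \frac{1}{i} = H_n \quad\text{and}\quad \Var[C_{\pi,n}] = \sum_{i=1}^n \frac{1}{i}\left(1 - \frac{1}{i}\right) = H_n - \sum_{i=1}^n \frac{1}{i^2},$$
and invoking $H_n = \log n + \gamma + o(1)$ together with $\sum_{i=1}^\infty 1/i^2 = \pi^2/6$ delivers the two claimed asymptotics.

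For $A_n$, I would exploit the sign character $\epsilon(\pi) = (-1)^{n - C_{\pi,n}(\pi)}$, which equals $+1$ on $A_n$ and $-1$ on its complement. Since each coset has size $n!/2$ for $n \ge 2$, any function $f$ on $S_n$ satisfies
$$\EE_{\eta \in A_n}[f(\eta)] = \EE_{\pi \in S_n}[f(\pi)] + \EE_{\pi \in S_n}[f(\pi)\,\epsilon(\pi)].$$
So it suffices to show that the sign-weighted moments $\EE_{S_n}[C_{\pi,n}^k\,\epsilon]$ are $o(1)$ for $k=1,2$. These can be extracted from the standard identity
$$\sum_{\pi \in S_n} t^{C_{\pi,n}(\pi)} = t(t+1)(t+2)\cdots(t+n-1),$$
which vanishes at $t = -1$ for $n \ge 2$ due to the factor $(t+1)$. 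Factoring the polynomial as $(t+1)Q(t)$ and differentiating once (resp. twice) at $t = -1$ yields $\EE_{S_n}[C_{\pi,n}(-1)^{C_{\pi,n}}] = O(1/n^2)$ and $\EE_{S_n}[C_{\pi,n}^2(-1)^{C_{\pi,n}}] = O(\log n / n^2)$. Multiplying by $(-1)^n$ recovers the $\epsilon$-weighted moments; both are $o(1)$, so the $A_n$ mean and variance inherit the $S_n$ asymptotics.

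The main obstacle is the evaluation of $\EE_{S_n}[C_{\pi,n}^k (-1)^{C_{\pi,n}}]$ via the rising-factorial generating function at $t = -1$, which is formally an indeterminate zero. The saving grace is that $(t+1)$ is a simple root: one differentiation clears it and produces a value of order $(n-2)!$, while a second differentiation introduces an additional $H_{n-2}$ factor via the logarithmic derivative of $Q(t)$. After dividing by $n!$ one obtains the $O(1/n^2)$ and $O(\log n / n^2)$ bounds respectively, after which the transfer from $S_n$ to $A_n$ is automatic and the $o(1)$ error terms remain under the same asymptotic form.
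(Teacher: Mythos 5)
Your proof is correct. Note that the paper does not actually prove this proposition: it is stated as classically known and attributed to Fleming--Pippenger, so there is no in-paper argument to compare against. Your derivation is a clean, self-contained replacement. The $S_n$ half via the Feller coupling (equivalently, via the factorization $\frac{1}{n!}\sum_{\pi} t^{C_{\pi,n}} = \prod_{i=1}^{n}\bigl(\tfrac{t}{i} + \tfrac{i-1}{i}\bigr)$, which exhibits $C_{\pi,n}$ as a sum of independent Bernoullis with parameters $1/i$) gives the exact values $\EE = H_n$ and $\Var = H_n - \sum_{i\le n} i^{-2}$, and the transfer to $A_n$ through the identity $\EE_{A_n}[f] = \EE_{S_n}[f] + \EE_{S_n}[f\epsilon]$ is exactly the right mechanism; I checked the signed moments: $\sum_\pi C_{\pi,n}(-1)^{C_{\pi,n}} = (n-2)!$ gives $\EE_{S_n}[C_{\pi,n}\epsilon] = \pm\frac{1}{n(n-1)}$, and the second signed moment is $O(\log n/n^2)$ as you claim (whether you differentiate $F(t)=(t+1)Q(t)$ twice to get the falling-factorial moment or apply $t\,d/dt$ twice for $\EE[C^2 t^C]$ directly, the extra $H_{n-2}$ factor from the logarithmic derivative of $Q$ is the only growth). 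Both errors are $o(1)$ and are in fact strong enough that the $A_n$ and $S_n$ statistics agree to within $O(\log n/n^2)$, which is sharper than the proposition requires. The one presentational nit is that for the second moment you need the operator $t\,\frac{d}{dt}$ applied twice rather than a plain second derivative if you literally want $\sum C^2 t^C$, but the bound is unaffected either way.
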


\subsection{Tools from Representation Theory}

For our proofs we will use a few common tools from the representation theory of finite groups. 

Let $G$ be a finite group and $P$ and $Q$ be probability measures on $G$. Define the \textbf{total variation distance} between $P$ and $Q$ to be $||P-Q|| = \max_{A \subset G}|P(A) - Q(A)|$. Also, define the \textbf{Fourier transform}  of $P$ at a representation $\rho$ to be
$$\hat{P}(\rho) = \sum_{g \in G} P(g) \rho(g)$$
We use the same definition for the Fourier transform of any function $f$ on $G$. 
We will utilize the following lemma, initially proved for a general group by Diaconis-Shahshahani and carefully applied to the case of $A_n$ by Chmutov-Pittel, which gives an upper bound on the total variation distance between $P_{w_c, n}$, the measure on $A_n$ induced by the commutator word map $w_c$, and $U_{A_n}$, the uniform measure on $A_n$, in terms of the Fourier transform of $P_{w_c, n}$:
\begin{lemma}[Diaconis-Shahshahani \cite{DiaShah} Chmutov-Pittel \cite{ChmuPit}]\label{lem:DSCPupperbound} Let $U_{A_n}$ and $P_{w_c, n}$ be defined as above. Then for $n \geq 5$.
$$||P_{w_c, n} - U_{A_n}||^2 \leq \frac{1}{4} \sum_{\lambda \neq (n),\lambda \neq (1^n)}\dim \rho^\lambda \Tr[\hat{P}_{w_c, n}(\rho^\lambda) \hat{P}_{w_c, n}(\rho^\lambda)^*]$$
\end{lemma}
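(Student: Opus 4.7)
The plan is to prove this as the classical Diaconis-Shahshahani upper bound lemma applied to $G = A_n$, and then re-index the resulting sum over irreducible representations of $A_n$ by partitions $\lambda \vdash n$ using the branching dictionary for the index-two inclusion $A_n \subset S_n$. The only inputs required are Cauchy-Schwarz, the Plancherel/Parseval formula on a finite group, and the standard restriction rule for $\rho^\lambda$ from $S_n$ to $A_n$.

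First I would set $f := P_{w_c,n} - U_{A_n}$, viewed as a function on $A_n$ satisfying $\sum_{g \in A_n} f(g) = 0$. By the definition of total variation distance and Cauchy-Schwarz,
$$\|P_{w_c,n} - U_{A_n}\|^{2} \;=\; \tfrac{1}{4}\Bigl(\sum_{g \in A_n}|f(g)|\Bigr)^{\!2} \;\leq\; \tfrac{|A_n|}{4}\sum_{g \in A_n}|f(g)|^{2}.$$
Applying the Plancherel identity on $A_n$ in the form $\sum_{g}|f(g)|^{2} = |A_n|^{-1}\sum_{\mu \in \widehat{A_n}}\dim(\mu)\,\Tr[\hat f(\mu)\hat f(\mu)^*]$ converts the right-hand side into $\tfrac{1}{4}$ times a sum over $\widehat{A_n}$. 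The trivial representation contributes zero because $\hat f(\mathrm{triv}) = \sum_{g} f(g) = 0$, and for every other $\mu \in \widehat{A_n}$ Schur orthogonality gives $\widehat{U_{A_n}}(\mu) = 0$, so $\hat f(\mu) = \widehat{P_{w_c,n}}(\mu)$.

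Next I would translate this $A_n$-sum into the stated $S_n$-sum over partitions $\lambda \vdash n$ with $\lambda \ne (n), (1^n)$. For this I would invoke the branching dictionary for $A_n \subset S_n$: if $\lambda$ is not self-conjugate, then $\rho^\lambda|_{A_n} \cong \rho^{\lambda'}|_{A_n}$ is a single $A_n$-irreducible (using $\rho^{\lambda'} \cong \rho^\lambda \otimes \mathrm{sgn}$ and $\mathrm{sgn}|_{A_n} \equiv 1$); while if $\lambda = \lambda'$, then $\rho^\lambda|_{A_n}$ splits as a direct sum of two non-isomorphic $A_n$-irreducibles, each of dimension $\tfrac{1}{2}\dim \rho^\lambda$. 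Since $P_{w_c,n}$ is supported on $A_n$, the Fourier matrix $\widehat{P_{w_c,n}}(\rho^\lambda)$ respects these decompositions, so a direct trace computation expresses the $A_n$-sum in terms of the $S_n$-sum written in the statement. The two excluded partitions $(n)$ and $(1^n)$ are precisely the $S_n$-irreducibles (trivial and sign) whose restrictions to $A_n$ coincide with the trivial representation already dropped from the $A_n$-sum.

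The only mildly involved step is the bookkeeping in the last paragraph: one must track a factor of two that arises both from the pairing $\{\lambda, \lambda'\}$ of conjugate non-self-conjugate partitions and from the dimension-halving in the self-conjugate case, and absorb it into the constant $\tfrac{1}{4}$. The hypothesis $n \ge 5$ is used to guarantee that $A_n$ is simple and the standard branching and character identities apply in their cleanest form, but otherwise the argument is entirely routine once Cauchy-Schwarz and Plancherel are in place.
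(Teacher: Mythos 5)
The paper states this lemma without proof, citing Diaconis--Shahshahani and Chmutov--Pittel, and your argument is a correct reconstruction of exactly the proof those sources give: Cauchy--Schwarz plus Plancherel on $A_n$, followed by the index-two branching dictionary to re-express the sum over $\widehat{A_n}$ as a sum over partitions $\lambda \neq (n), (1^n)$. Your factor-of-two bookkeeping is also right --- the branching computation actually produces $\tfrac{1}{8}$ in place of $\tfrac{1}{4}$, and since each term $\dim \rho^\lambda \, \Tr[\hat{P}_{w_c,n}(\rho^\lambda)\hat{P}_{w_c,n}(\rho^\lambda)^*]$ is non-negative the stated (slightly weaker) bound follows a fortiori.
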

If the probability measure in consideration is a class function, its Fourier transform at a representation takes a particularly nice form:
\begin{lemma}[Diaconis-Shahshahani \cite{DiaShah}]\label{lem:DSfourier}Let $G$ be a finite group and $P$ a probability measure on $G$ constant on conjugacy classes. Let $\rho$ be a linear representation of $G$. Then, the Fourier transform of $P$ at the representation $\rho$, can be expressed as,
$$\hat{P}(\rho) = \frac{1}{\dim (\rho)} \sum_{K \text{ conjugacy class of }G} P[K]\cdot|K| \cdot \chi^\rho(K) \cdot I_{\dim(\rho)}$$ 
where $P[K]$ is the value that $P$ takes on a single element of $K$, $\chi^\rho$ is the character associated to the representation $\rho$ and $I_{\dim(\rho)}$ is the $\dim(\rho) \times \dim(\rho)$ identity matrix.
\end{lemma}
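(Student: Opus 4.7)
The plan is to recognize this as a direct application of Schur's lemma to the averaging operator $\hat{P}(\rho)$. (Note: the formula given is stated for an irreducible $\rho$, which is how it will be applied via Lemma~\ref{lem:DSCPupperbound}; this is implicit in the statement and I will assume it.) First I would group the defining sum by conjugacy classes, using that $P$ is constant on each class:
\begin{equation*}
\hat{P}(\rho) \;=\; \sum_{g \in G} P(g)\rho(g) \;=\; \sum_{K} P[K] \sum_{g \in K} \rho(g).
\end{equation*}

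Next I would show that $\hat{P}(\rho)$ is a $G$-intertwiner, i.e.\ commutes with $\rho(h)$ for every $h \in G$. For any such $h$,
\begin{equation*}
\rho(h)\,\hat{P}(\rho)\,\rho(h)^{-1} \;=\; \sum_{g \in G} P(g)\,\rho(hgh^{-1}) \;=\; \sum_{g \in G} P(h^{-1}gh)\,\rho(g) \;=\; \hat{P}(\rho),
\end{equation*}
where the last equality uses that $P$ is a class function, so $P(h^{-1}gh) = P(g)$. Since $\rho$ is irreducible, Schur's lemma forces $\hat{P}(\rho) = c\cdot I_{\dim(\rho)}$ for some scalar $c \in \mathbb{C}$.

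To identify $c$, take traces on both sides. On the right, $\Tr(c\cdot I_{\dim(\rho)}) = c\cdot\dim(\rho)$. On the left,
\begin{equation*}
\Tr\bigl(\hat{P}(\rho)\bigr) \;=\; \sum_K P[K] \sum_{g \in K} \chi^\rho(g) \;=\; \sum_K P[K] \cdot |K| \cdot \chi^\rho(K),
\end{equation*}
using that $\chi^\rho$ is constant on each conjugacy class $K$. Solving for $c$ yields exactly the displayed formula. The argument is essentially routine; the only conceptual point worth highlighting is the verification that $\hat{P}(\rho)$ intertwines $\rho$, which is where the class-function hypothesis on $P$ is used, and the only subtlety is the implicit irreducibility of $\rho$ needed to invoke Schur's lemma and conclude that the intertwiner is scalar.
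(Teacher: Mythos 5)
Your proof is correct and is the standard argument; the paper itself offers no proof of this lemma, citing Diaconis--Shahshahani instead, so there is nothing internal to compare against. Your parenthetical about irreducibility is the one substantive point and it is a good catch: as literally stated for an arbitrary linear representation the formula is false --- for instance, with $G=\mathbb{Z}/2\mathbb{Z}$, $\rho$ the regular representation, and $P$ the point mass at the nontrivial element, $\hat{P}(\rho)$ is a non-scalar permutation matrix while the right-hand side vanishes since $\chi^\rho$ does --- and Schur's lemma is exactly where irreducibility enters. Since the lemma is only invoked (in Lemma~\ref{lem:distconv}) at the irreducible representations $\rho^\lambda$ of $S_n$, the implicit hypothesis is harmless, and your trace computation correctly identifies the scalar.
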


We will also use the following estimate by Liebeck-Shalev:

\begin{lemma}[Liebeck-Shalev \cite{LieSha}]\label{lem:liebeckshalev} Let $s \in \RR_{> 0}$ be fixed. Then,
$$\sum_{\chi \in \Irr(S_n)} \chi(1)^{-s} = 2 +  O(n^{-s}).$$
\end{lemma}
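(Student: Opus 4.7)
The plan is to parametrize $\Irr(S_n)$ by partitions $\lambda \vdash n$ via the standard bijection, writing $\chi^\lambda$ for the character indexed by $\lambda$; its degree $\chi^\lambda(1)$ equals the number of standard Young tableaux of shape $\lambda$. The idea is to isolate the four partitions of smallest dimension, which produce the $2$ and the $O(n^{-s})$ main terms, and then to show that all remaining partitions contribute $o(n^{-s})$.

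For the main terms, the trivial partition $(n)$ and sign partition $(1^n)$ both have dimension $1$, contributing exactly $2$. The standard partitions $(n-1, 1)$ and $(2, 1^{n-2})$ both have dimension $n-1$, by direct application of the hook length formula, contributing $2(n-1)^{-s} = O(n^{-s})$. Together these four partitions already produce the claimed $2 + O(n^{-s})$.

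To bound the remainder, I would stratify the other partitions by the deviation $k(\lambda) := \min(n - \lambda_1,\, n - \lambda_1') \geq 2$. Via the hook length formula, partitions with $\lambda_1 = n - k$ for $k \geq 2$ fixed admit a polynomial lower bound $\chi^\lambda(1) \geq C_k n^k$ with $C_k > 0$ depending only on $k$, and the number of such partitions is at most $p(k)$; conjugation symmetry handles $\lambda_1' = n - k$ at the cost of a factor of $2$. Summing yields
\[
\sum_{k \geq 2} 2 p(k) C_k^{-s} n^{-ks} = O(n^{-2s}),
\]
dominated by the $k = 2$ term. For partitions with both $\lambda_1 < n/2$ and $\lambda_1' < n/2$, a sharper analysis of the hook length formula gives an exponential lower bound $\chi^\lambda(1) \geq c^n$ for some $c > 1$; since the partition function $p(n) = e^{O(\sqrt n)}$ grows only subexponentially, the total contribution from these balanced shapes is $e^{O(\sqrt n) - \Omega(ns)}$, which decays super-polynomially in $n$.

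The main technical obstacle is establishing the dimension lower bounds uniformly in $n$. In the thin regime the polynomial bound follows from showing that the product of hook lengths grows like $n!/n^k$, which is a routine calculation once the shape is split into the long first row and the small partition $\mu \vdash k$ sitting below. The exponential bound in the balanced regime is subtler: it requires that the product of hook lengths be small enough to defeat $n!$ even as the shape varies, which demands a careful Stirling-type estimate that must beat the subexponential partition count. Combining everything yields $\sum_{\chi \in \Irr(S_n)} \chi(1)^{-s} = 2 + 2(n-1)^{-s} + O(n^{-2s}) = 2 + O(n^{-s})$, completing the proof.
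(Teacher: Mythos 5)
The paper does not prove this lemma at all: it is quoted verbatim from Liebeck--Shalev \cite{LieSha} (it is their asymptotic for the Witten zeta function of $S_n$), so there is no in-paper argument to compare against. Your sketch follows the standard architecture of the actual proof in the literature: isolate $(n)$, $(1^n)$ (dimension $1$, giving the $2$) and $(n-1,1)$, $(2,1^{n-2})$ (dimension $n-1$, giving the $O(n^{-s})$), then kill everything else by stratifying according to how far $\lambda_1$ or $\lambda_1'$ is from $n$, using polynomial dimension lower bounds in the thin regime and exponential ones in the balanced regime against the subexponential count $p(n)=e^{O(\sqrt n)}$. That skeleton is correct and is essentially how Liebeck--Shalev argue.

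The one place your write-up has a real gap is the uniformity of the thin-regime bound. You introduce $C_k$ with ``$k\geq 2$ fixed,'' but in the sum $\sum_{k\geq 2} 2p(k)C_k^{-s}n^{-ks}$ the index $k$ ranges up to roughly $n/2$, where ``fixed-$k$'' asymptotics say nothing. If $C_k$ degrades like $1/(k!)^2$ (which is what the naive hook-length computation for $\lambda=(n-k,\mu)$, $\mu\vdash k$, gives), then for $k$ of order $\sqrt{n}$ and beyond the term $C_k^{-s}n^{-ks}=((k!)^{2}/n^{k})^{s}$ is no longer small, and the claim that the series is dominated by the $k=2$ term is unjustified. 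The repair is standard but is precisely the technical content of the lemma: one needs a dimension lower bound valid uniformly for all $2\le k\le n/2$, e.g.\ $\chi^\lambda(1)\ge\binom{n-k}{k}$ when $\lambda_1=n-k$, which is superpolynomial once $k$ grows with $n$ and hence closes the intermediate regime between your ``fixed $k$'' and ``balanced'' cases. With such a uniform bound in hand (together with the exponential bound for balanced shapes, which also needs a Stirling-type argument you only gesture at), your conclusion $2+2(n-1)^{-s}+O(n^{-2s})$ follows. So: right approach, correct identification of the main terms, but the two dimension lower bounds are asserted rather than proved, and the range of validity of the thin-regime bound is the step that would fail as written.
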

The sum in this lemma is also sometimes referred to as the Witten zeta function over $S_n$.

\section{Proof of Topology results}\label{sec:topology}
In this section we will first prove a result that allows us to estimate moments of the number of vertices on random STSs by moments of the number of cycles of a uniformly random permutation from $A_n$.  We will then prove the Genus Theorem. We begin with a few lemmas, the first of which will show that $P_{w_c, n}$ converges to the uniform distribution on $A_n$ asymptotically:

\begin{lemma}\label{lem:distconv} Let $U_{A_n}$ and $P_{w_c, n}$ be as previously defined. Then,
$$||P_{w_c, n} - U_{A_n}|| = O\left(\frac{1}{n}\right)$$

\end{lemma}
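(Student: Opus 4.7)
The plan is to combine the upper bound from Lemma~\ref{lem:DSCPupperbound} with an explicit evaluation of the Fourier transforms $\hat{P}_{w_c,n}(\rho^\lambda)$ using the Frobenius formula (\ref{eqn:probspecialform}), and then close the argument with the Liebeck--Shalev bound (Lemma~\ref{lem:liebeckshalev}) applied at exponent $s=2$.

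First, I would compute $\hat{P}_{w_c,n}(\rho^\lambda)$ for each partition $\lambda \vdash n$. Because $P_{w_c,n}$ is a class function on $S_n$, Schur's lemma forces $\hat{P}_{w_c,n}(\rho^\lambda) = c_\lambda\, I_{\dim\rho^\lambda}$, so it suffices to identify the scalar $c_\lambda$. Taking traces and invoking Lemma~\ref{lem:DSfourier} gives
$$c_\lambda\cdot \dim\rho^\lambda \;=\; \sum_{\pi\in S_n} P_{w_c,n}(\pi)\,\chi^\lambda(\pi).$$
Substituting (\ref{eqn:probspecialform}) and swapping the order of summation yields
$$\sum_{\pi}P_{w_c,n}(\pi)\,\chi^\lambda(\pi) \;=\; \frac{1}{n!}\sum_{\chi\in \Irr(S_n)}\frac{1}{\chi(1)}\sum_{\pi\in S_n}\chi(\pi)\,\chi^\lambda(\pi).$$
The characters of $S_n$ are real, so orthogonality collapses the inner sum to $n!\,\delta_{\chi,\chi^\lambda}$. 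The outer sum keeps only $\chi=\chi^\lambda$, leaving $1/\dim\rho^\lambda$. Hence
$$c_\lambda \;=\; \frac{1}{(\dim\rho^\lambda)^2}, \qquad \text{so}\qquad \Tr\!\bigl[\hat{P}_{w_c,n}(\rho^\lambda)\,\hat{P}_{w_c,n}(\rho^\lambda)^{*}\bigr] = \frac{1}{(\dim\rho^\lambda)^3}.$$

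Plugging this into Lemma~\ref{lem:DSCPupperbound}, the factor $\dim\rho^\lambda$ cancels one power of the dimension and gives
$$\|P_{w_c,n} - U_{A_n}\|^{2} \;\leq\; \frac{1}{4}\sum_{\lambda\neq (n),(1^n)} \frac{1}{(\dim\rho^\lambda)^2}.$$
Applying Lemma~\ref{lem:liebeckshalev} with $s=2$, the full sum $\sum_{\chi\in \Irr(S_n)}\chi(1)^{-2}$ equals $2 + O(n^{-2})$; the trivial partition $(n)$ and the sign partition $(1^n)$ each contribute exactly $1$, so subtracting them leaves $O(n^{-2})$. Taking square roots yields $\|P_{w_c,n}-U_{A_n}\| = O(1/n)$, as claimed.

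I expect the main obstacle to be cosmetic rather than substantive: one must be careful that even though $P_{w_c,n}$ is supported on $A_n$, the class-function orthogonality and the Liebeck--Shalev estimate used above are those of $S_n$, so the Fourier-theoretic setup in Lemma~\ref{lem:DSCPupperbound} (indexed by $S_n$-partitions with the trivial and sign representations removed) matches the Frobenius formula perfectly. Once this indexing is consistent, the computation is essentially a one-line collapse via character orthogonality, and the rate $O(1/n)$ comes directly from the $s=2$ case of the Witten zeta estimate.
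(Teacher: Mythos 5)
Your proposal is correct and follows essentially the same route as the paper: both arguments compute $\hat{P}_{w_c,n}(\rho^\lambda) = (\dim\rho^\lambda)^{-2} I$ by combining the Frobenius formula with character orthogonality, plug this into the Diaconis--Shahshahani/Chmutov--Pittel bound to get $\|P_{w_c,n}-U_{A_n}\|^2 \leq \frac{1}{4}\sum_{\lambda\neq(n),(1^n)}(\dim\rho^\lambda)^{-2}$, and conclude via Liebeck--Shalev at $s=2$. Your use of Schur's lemma plus a trace to identify the scalar is just a slight repackaging of the paper's direct substitution into Lemma~\ref{lem:DSfourier}; there is no substantive difference.
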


\begin{proof}
We start with the Diaconis-Shahshahani, Chmutov-Pittel upper bound from Lemma \ref{lem:DSCPupperbound}:
$$||P_{w_c, n} - U_{A_n}||^2 \leq \frac{1}{4} \sum_{\lambda \neq (n), \lambda \neq (1^n)} \dim (\rho^\lambda) \Tr[\hat{P}_{w_c, n}(\rho^\lambda)\hat{P}_{w_c, n}(\rho^\lambda)^*]$$ for $n \geq 5$. Using equation \ref{eqn:probspecialform} and Lemma \ref{lem:DSfourier}, we get 
\begin{align*}
\hat{P}_{w_c,n}(\rho^\lambda) &= \frac{1}{\dim \rho^\lambda} \sum_{K \text{ conjugacy classes}}P_{w_c,n}(K)\cdot |K| \cdot \chi^\lambda(K) I_{\dim \rho^\lambda}\\
&= \frac{1}{\dim \rho^\lambda}  \sum_{K \text{ conjugacy classes}}\left[ \left(\frac{1}{n!}\sum_{\omega \vdash n} \frac{\chi^\omega(K)}{\dim \rho^\omega}\right) |K| \chi^\lambda(K)\right] \cdot I_{\dim \rho^\lambda}\\
&= \frac{1}{\dim \rho^\lambda}\sum_{\omega \vdash n}\frac{1}{\dim \rho^\omega}\cdot \frac{1}{n!}\sum_{K \text{ conjugacy classes}}\chi^\omega(K)\cdot |K| \cdot \chi^\lambda(K) \cdot I_{\dim \rho^\lambda}\\
&=  \frac{1}{\dim \rho^\lambda}\sum_{\omega \vdash n}\frac{1}{\dim \rho^\omega}\cdot \frac{1}{n!}\sum_{\sigma \in S_n}\chi^\omega(\sigma)\cdot\chi^\lambda(\sigma) \cdot I_{\dim \rho^\lambda}\\
&= \frac{1}{(\dim \rho^\lambda)^2} \cdot  I_{\dim \rho^\lambda} \hspace{4.5cm} \text{ (due to orthogonality of irreducible characters)}
\end{align*}

Since this is a real diagonal matrix, $\hat{P}_{w_c,n}(\rho^\lambda)^* = \hat{P}_{w_c, n}(\rho^\lambda)$. So,
$$||P_{w_c, n} - U_{A_n}||^2 \leq \frac{1}{4} \sum_{\lambda \neq (n), \lambda \neq (1^n)} \frac{1}{(\dim \rho^\lambda)^2}$$

Note that $\chi^\lambda(1) = \dim \rho^\lambda$ and $\dim \rho^\lambda =1 $ when $\lambda = (n) \text{ or } (1^n)$. Then using Lemma \ref{lem:liebeckshalev}, we obtain the required estimate.
\end{proof}
We will next state the following exercise from Stanley's Enumerative Combinatorics II \cite{Stan2}, as a Lemma we will use later. A general version of this lemma, along with the proof, needed for general polygon-tiled surfaces appears in Section \ref{sec:PTS} as Lemma \ref{lem:genfcnGEN}.

\begin{lemma}\label{lem:genfcn} The generating function for $C_{w_c, n}$ is given by,
$$ G(q):= \sum_{\sigma, \tau \in S_n} q^{C_{w_c, n}(\sigma, \tau)} = (n!) \sum_{\lambda \vdash n}\prod_{r \in \lambda}(q+ \cont(r))$$
where 
\begin{itemize}
\item the sum is over unordered partitions $\lambda$ of $n$,
\item the product is over squares in the young diagram of a partition,
\item $\cont(r)$ is the content of square $r$.
\end{itemize}
\end{lemma}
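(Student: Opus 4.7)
The plan is to reduce $G(q)$ to a character-theoretic sum using Frobenius's formula for $P_{w_c,n}$, then evaluate the resulting sum by means of the Jucys--Murphy elements in $\CC[S_n]$.

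First, since the fiber of the commutator map $w_c$ over $\pi \in S_n$ has cardinality $(n!)^2\, P_{w_c,n}(\pi)$, we have
$$G(q) \;=\; (n!)^2 \sum_{\pi \in S_n} P_{w_c,n}(\pi)\, q^{\cyc(\pi)}.$$
Substituting Frobenius's formula (\ref{eqn:probspecialform}) and interchanging the order of summation gives
$$G(q) \;=\; n! \sum_{\lambda \vdash n} \frac{1}{\chi^\lambda(1)} \sum_{\pi \in S_n} \chi^\lambda(\pi)\, q^{\cyc(\pi)},$$
so the problem reduces to evaluating the inner character sum for each partition $\lambda \vdash n$.

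The key tool is the classical Jucys identity
$$\prod_{k=1}^n (q + J_k) \;=\; \sum_{\pi \in S_n} q^{\cyc(\pi)}\, \pi \qquad \text{in } \CC[S_n],$$
where $J_k = \sum_{i<k}(i,k)$ are the Jucys--Murphy elements (with $J_1 = 0$). I would prove this by induction on $n$: multiplying the $(n-1)$-case on the right by $(q + J_n)$, the $q$ factor appends $n$ as a new fixed point to each $\pi \in S_{n-1}$ (increasing $\cyc$ by one), while each transposition $(i,n)$ from $J_n$ splices $n$ into the cycle of $i$ (preserving $\cyc$), and one checks this matches the right-hand side exactly. Now the $J_k$ commute and act diagonally on the Gelfand--Tsetlin basis of $\rho^\lambda$: the eigenvalue of $J_k$ on the basis vector indexed by a standard Young tableau $T$ is $\cont(T^{-1}(k))$. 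As $k$ runs over $\{1,\dots,n\}$, the cell $T^{-1}(k)$ runs over every square of $\lambda$ exactly once, so $\prod_k (q + J_k)$ acts on all of $\rho^\lambda$ as the scalar $\prod_{r \in \lambda}(q + \cont(r))$. Taking $\chi^\lambda$ of both sides of the Jucys identity then yields
$$\sum_{\pi \in S_n} \chi^\lambda(\pi)\, q^{\cyc(\pi)} \;=\; \chi^\lambda(1) \prod_{r \in \lambda}(q + \cont(r)).$$

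Plugging this back into the expression for $G(q)$ cancels the $\chi^\lambda(1)$ in the denominator and produces $G(q) = n! \sum_{\lambda \vdash n} \prod_{r \in \lambda}(q + \cont(r))$, as claimed. The main obstacle is establishing the Jucys identity: its inductive proof is classical but requires careful combinatorial bookkeeping of how cycles grow when multiplying by $J_n$. Everything else is a mechanical assembly of Frobenius's formula with the content-eigenvalue property of the Jucys--Murphy elements.
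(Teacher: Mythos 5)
Your proof is correct, but it takes a genuinely different route from the paper's. The paper treats this lemma as a special case of Lemma \ref{lem:genfcnGEN}: there the class function $N_{w_c}$ is pushed through the Frobenius characteristic map, expanded in Schur functions via Frobenius's formula (Theorem \ref{thm:fouriercoeffs} with $k=2$), and then the principal specialization $s_\lambda(1^q)=\prod_{r\in\lambda}\frac{q+\cont(r)}{h(r)}$ (Lemma \ref{lem:schurspec}, i.e.\ Stanley's hook--content formula) produces the content product after the hook lengths cancel against $H_\lambda$. You instead stay inside $\CC[S_n]$: after the same opening move (grouping pairs by the value of the commutator and invoking equation (\ref{eqn:probspecialform})), you evaluate the character sum $\sum_{\pi}\chi^\lambda(\pi)q^{\#\mathrm{cycles}(\pi)}$ by applying $\Tr\rho^\lambda$ to the Jucys identity $\prod_{k}(q+J_k)=\sum_\pi q^{\#\mathrm{cycles}(\pi)}\pi$ and using the content eigenvalues of the Jucys--Murphy elements on the Gelfand--Tsetlin basis. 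The two key inputs (hook--content specialization of $s_\lambda$ versus JM spectral theory) are well-known equivalents, so neither route is circular or harder in any essential way; yours avoids symmetric-function machinery entirely at the cost of importing the Okounkov--Vershik description of $J_k$, while the paper's version slots more directly into the generalization to $\PTS_{n,2k}$, where the extra factors $H_\lambda^{k-2}$ or $H_\lambda^{k-3}$ fall out of the same specialization. Your argument would also generalize: the weighting by $\chi^\lambda(1)^{-\epsilon}$ simply changes the prefactor of each $\lambda$-term, and the JM evaluation of the inner sum is unchanged. One cosmetic remark: the paper reserves $\cyc(\pi)$ for the cycle-type partition of $\pi$, whereas you use it for the number of cycles; the intent is clear but the notation should be reconciled if this were merged into the text.
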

We refer the reader to Appendix \ref{sec:combinatorics} for the definitions of the combinatorial notions in this lemma.

Using Lemma \ref{lem:genfcn} we can now obtain a large deviation result for $C_{w_c,n}$.

\begin{lemma}[Large deviations] \label{lem:largedev}Let $C_{w_c,n}$ be as before. Then,
$\Pr(C_{w_c,n} \geq t) = O\left(\frac{n}{2^t}\right)$
\end{lemma}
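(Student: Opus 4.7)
Plan: The statement is a moment-generating-function style Markov bound, so I would first note that since $x\mapsto 2^x$ is increasing,
$$\Pr(C_{w_c,n}\geq t) \;=\; \Pr(2^{C_{w_c,n}}\geq 2^t) \;\leq\; \frac{\EE[2^{C_{w_c,n}}]}{2^t},$$
reducing the problem to showing $\EE[2^{C_{w_c,n}}] = O(n)$. I would extract this expectation from Lemma~\ref{lem:genfcn} specialized at $q=2$:
$$\EE[2^{C_{w_c,n}}] \;=\; \frac{G(2)}{(n!)^2} \;=\; \frac{1}{n!}\sum_{\lambda\vdash n}\prod_{r\in\lambda}\bigl(2+\cont(r)\bigr).$$

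The key observation is that the value $q=2$ forces a dramatic collapse of the sum. If $\lambda$ has three or more rows, then the cell $r=(3,1)$ sits in $\lambda$, and it has content $1-3=-2$, so the factor $2+\cont(r)=0$ kills the whole product. Thus only two-row partitions $\lambda=(n-b,b)$ with $0\leq b\leq\lfloor n/2\rfloor$ contribute. For such a $\lambda$, row~1 has contents $0,1,\ldots,n-b-1$ giving the product $(n-b+1)!$, and row~2 has contents $-1,0,\ldots,b-2$ giving the product $b!$. Hence
$$\sum_{\lambda\vdash n}\prod_{r\in\lambda}(2+\cont(r)) \;=\; \sum_{b=0}^{\lfloor n/2\rfloor}(n-b+1)!\,b! \;=\; (n+1)!\sum_{b=0}^{\lfloor n/2\rfloor}\frac{1}{\binom{n+1}{b}},$$
using $(n-b+1)!\,b!/(n+1)! = 1/\binom{n+1}{b}$.

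The remaining binomial sum is dominated by its $b=0$ term: the term at $b=0$ equals $1$, at $b=1$ equals $1/(n+1)$, and subsequent terms are even smaller, so the whole sum equals $1+O(1/n)$. Dividing by $n!$ gives $\EE[2^{C_{w_c,n}}]=(n+1)(1+O(1/n))=O(n)$, and combining with the Markov estimate yields the desired $O(n/2^t)$ bound. There is no real obstacle here once one thinks to plug $q=2$ into the generating function; the only mildly clever step is noticing that $q=2$ kills all partitions with at least three rows, which is exactly what makes the content product sum tractable in closed form.
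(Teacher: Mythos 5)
Your proof is correct and follows essentially the same route as the paper: the paper likewise bounds $\Pr(C_{w_c,n}\geq t)\leq g(q)/q^t$ for the probability generating function $g$ obtained from Lemma~\ref{lem:genfcn}, sets $q=2$ so that only the two-row partitions $(n-j,j)$ survive (the content $-2$ at cell $(3,1)$ killing everything else), and bounds $\frac{1}{n!}\sum_j (n-j+1)!\,j!$ by $O(n)$. Your closed form via $1/\binom{n+1}{b}$ is a slightly sharper way of finishing the same computation.
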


\begin{proof}Let $g(q) := \frac{G(q)}{(n!)^2}$ be the probability generating function of $C_{w_c, n}$. By Lemma \ref{lem:genfcn} we have,
$$g(q) = \frac{1}{(n!)}\sum_{\lambda \vdash n} \prod_{r \in \lambda}(q+\cont(r))$$
Then for $q \geq 1$, 
\begin{align*}
\Pr(C_{w_c, n} \geq t]) = \sum_{s \geq t} \Pr(C_{w_c, n} = s)
\leq \frac{1}{q^t} \sum_{s \geq t} \Pr(C_{w_c, n} = s)q^s 
\leq \frac{g(q)}{q^t}
\end{align*}

Taking $q = 2$, we see that in the expression for $g(2)$, the only partitions that survive in the sum over the partitions are the ones that do not have any square with content -2. These are exactly the partitions of the form $(n-j, j)$ for $j = 0$ to $\floor{n/2}$. Hence, 


$$\Pr(C_{w_c,n} \geq t) \leq 
\frac{1}{2^tn!} \sum_{j=0}^{\floor{n/2}} \prod_{r \in (n-j, j)}(2 + \cont(r))= \frac{1}{2^t}\sum_{j=0}^{\floor{n/2}}\frac{(n-j+1)!j!}{n!}\leq \frac{1}{2^t}\left(n+1 + \sum_{j=1}^{\floor{n/2}} 1\right)= O\left(\frac{n}{2^{t}}\right)$$
\end{proof}

\subsection{Moments of vertex count}
We now state and prove the following theorem which allows us to approximate the moments of number of vertices by the moments of number of cycles in random permutations of $A_n$. The proof technique is inspired by the proof of Theorem 1.3 of Fleming-Pippenger \cite{FlemPip}. 

\begin{theorem}[Moments of vertex count]\label{thm:moments}

Let $p(x)$ be a polynomial of degree $\ell$ that is non-negative and non-decreasing for $x \geq 0$. Then,
$$\EE[p(C_{w_c,n})] = \EE[p(C_{\eta, n})] + O\left(\frac{\log^\ell(n)}{n}\right)$$

\end{theorem}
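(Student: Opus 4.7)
The plan is to bound the difference $\EE[p(C_{w_c,n})] - \EE[p(C_{\eta,n})]$ directly. Writing
$$\EE[p(C_{w_c,n})] - \EE[p(C_{\eta,n})] = \sum_{k \geq 0} p(k)\bigl(\Pr[C_{w_c,n} = k] - \Pr[C_{\eta,n} = k]\bigr),$$
I would split the sum at a threshold $T$ chosen proportional to $\log n$ (the balancing suggests $T = 2\log_2 n$). For $k \leq T$ I would use that the two measures on $A_n$ are close in total variation (Lemma \ref{lem:distconv}), while for $k > T$ I would bound each expectation's tail separately using the large deviation estimates (Lemmas \ref{lem:largedevunif} and \ref{lem:largedev}).

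For the small-$k$ piece, the hypothesis that $p$ is non-negative and non-decreasing gives $p(k) \leq p(T) = O(T^\ell)$ for $k \leq T$. Since the distributions of $C_{w_c,n}$ and $C_{\eta,n}$ are pushforwards of $P_{w_c,n}$ and $U_{A_n}$ under the cycle-count map on $A_n$, grouping elements of $A_n$ by their number of cycles and applying the triangle inequality gives
$$\sum_{k \geq 0}|\Pr[C_{w_c,n}=k] - \Pr[C_{\eta,n}=k]| \leq \sum_{\pi \in A_n} |P_{w_c,n}(\pi) - U_{A_n}(\pi)| = 2\|P_{w_c,n} - U_{A_n}\| = O(1/n).$$
Combining these yields the bound $O(T^\ell/n) = O(\log^\ell(n)/n)$ for the small-$k$ contribution.

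For the large-$k$ piece, I would use Abel summation together with the monotonicity of $p$. Both $\Pr[C_{w_c,n} \geq k]$ and $\Pr[C_{\eta,n} \geq k]$ are $O(n/2^k)$, and since $p(k) - p(k-1) = O(k^{\ell-1})$, one obtains
$$\sum_{k > T} p(k)\Pr[C_{w_c,n}=k] \leq p(T+1)\,\Pr[C_{w_c,n} \geq T+1] + \sum_{k > T+1}(p(k)-p(k-1))\Pr[C_{w_c,n}\geq k] = O\!\left(\frac{nT^\ell}{2^T}\right),$$
and analogously for $C_{\eta,n}$. Plugging $T = 2\log_2 n$ makes $n/2^T = 1/n$, so this large-$k$ tail is also $O(\log^\ell(n)/n)$. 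Summing the two contributions gives the claimed estimate.

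The main obstacle is essentially bookkeeping: selecting $T$ so that the total-variation error $p(T)/n$ and the tail error $nT^\ell/2^T$ balance, and justifying the Abel-summation step using only the monotonicity hypothesis rather than assuming $p$ is convex or has some tighter growth control. No single step is deep once Lemmas \ref{lem:distconv}, \ref{lem:largedevunif}, and \ref{lem:largedev} are in hand, but a careful splitting argument is needed so that the polynomial factor $T^\ell$ does not accidentally absorb an extra $\log n$.
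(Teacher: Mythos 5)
Your proposal is correct and follows essentially the same route as the paper: split at a threshold of order $\log n$, control the small-cycle-count range via the total variation bound of Lemma \ref{lem:distconv} together with monotonicity of $p$, and control the tails via the large deviation estimates of Lemmas \ref{lem:largedev} and \ref{lem:largedevunif}, using Abel summation. The only (immaterial) differences are bookkeeping: the paper performs the summation by parts globally before splitting and bounds the tail crudely by $p(n)\Pr(C\geq t)$ with a correspondingly larger threshold $t \asymp (\ell+1)\log_2 n$, whereas you split first and keep the $k$-dependence in the tail, allowing $T = 2\log_2 n$; both yield the same $O(\log^\ell(n)/n)$ error.
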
 
\begin{proof} 
First by definition,
$$\EE[p(C_{w_c, n}] = \sum_{s=0}^n p(s)\Pr(C_{w_c, n} = s)$$
Rewriting this in terms of the tail probabilities, we have,
$$\EE[p(C_{w_c, n})] = p(0) + \sum_{s=1}^n (p(s) - p(s-1))\Pr(C_{w_c, n}\geq s)$$
Now, subtracting the analogous expression for $\EE[p(C_{\eta, n})]$ and taking absolute values, 
\begin{align*}|\EE[p(C_{w_c, n})] - \EE[p(C_{\eta, n})]| &\leq\left(\sum_{1\leq s \leq t} (p(s) - p(s-1))\right)|\Pr(C_{w_c, n}\geq s) - \Pr(C_{\eta, n} \geq s)|\\
&+  \sum_{t < s \leq n} ((p(s) - p(s-1) \Pr(C_{w_c, n} \geq s)) + \sum_{t < s \leq n} ((p(s) - p(s-1) \Pr(C_{\eta, n} \geq s))\\
&\leq ||P_{w_c, n} - U_{A_n}||\cdot p(t) + \Pr(C_{w_c, n} \geq t)\cdot p(n) + \Pr(C_{\eta, n} \geq t)\cdot p(n)\\
\end{align*}
The first term is estimated by applying Lemma \ref{lem:distconv}, the second term is approximated by applying Lemma \ref{lem:largedev} and the third term is approximated using Lemma \ref{lem:largedevunif} and so we obtain,
$$|\EE[p(C_{w_, n})] - \EE[p(C_{\eta, n})]| \leq O\left(\frac{t^\ell}{n}\right) + O\left(\frac{n^{\ell+1}}{2^t}\right) + O\left(\frac{n^{\ell+1}}{2^t}\right)$$
Taking $t = \ceil{\frac{(\ell+1)\log(n)}{\log(2)}}$ we obtain the required estimate. \end{proof}


\subsection{Genus Theorem}
The expected value and variance of $G_n$ stated in the Genus Theorem is immediate from Theorem \ref{thm:moments} and Proposition \ref{prop:expAnSn}.
\begin{corollary}\label{cor:expected}
The expected value and variance of the genus random variable $G_n$ are given as follows:
$$\EE[G_n] = \frac{n}{2} - \frac{\log(n)}{2} - \frac{\gamma}{2} + 1 + o(1); \hspace{1cm} \Var[G_n] = \frac{\log(n)}{4}+ \frac{\gamma}{4} - \frac{\pi^2}{24} + o(1)$$
\end{corollary}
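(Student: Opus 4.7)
The plan is to exploit the linear relationship $G_n = \frac{n}{2} - \frac{C_{w_c,n}}{2} + 1$ and reduce the computation to moments of $C_{w_c, n}$, which by Theorem \ref{thm:moments} agree asymptotically with the corresponding moments of $C_{\eta, n}$, the cycle count of a uniform permutation in $A_n$. Proposition \ref{prop:expAnSn} then supplies the desired asymptotics.

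For the mean, I would apply Theorem \ref{thm:moments} with $p(x) = x$, which is non-negative and non-decreasing on $[0, \infty)$ and has degree $\ell = 1$. This gives $\EE[C_{w_c, n}] = \EE[C_{\eta, n}] + O(\log(n)/n)$. Combining with Proposition \ref{prop:expAnSn}, which yields $\EE[C_{\eta, n}] = \log(n) + \gamma + o(1)$, and using linearity of expectation together with the observation that $O(\log(n)/n) = o(1)$, one obtains the claimed formula for $\EE[G_n]$.

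For the variance, I would use $\Var[G_n] = \tfrac{1}{4}\Var[C_{w_c, n}]$ and expand as $\Var[C_{w_c, n}] = \EE[C_{w_c, n}^2] - \EE[C_{w_c, n}]^2$. A second application of Theorem \ref{thm:moments}, this time with $p(x) = x^2$ (non-negative, non-decreasing on $[0, \infty)$, degree $\ell = 2$), gives $\EE[C_{w_c, n}^2] = \EE[C_{\eta, n}^2] + O(\log^2(n)/n)$. Squaring the degree-one estimate and using $\EE[C_{\eta, n}] = O(\log(n))$ to bound the cross term, $\EE[C_{w_c, n}]^2 = \EE[C_{\eta, n}]^2 + O(\log^2(n)/n)$. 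Subtracting then yields $\Var[C_{w_c, n}] = \Var[C_{\eta, n}] + O(\log^2(n)/n)$, and Proposition \ref{prop:expAnSn} closes the argument with $\Var[C_{\eta, n}] = \log(n) + \gamma - \pi^2/6 + o(1)$.

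There is essentially no obstacle; the only point requiring a moment of care is the variance computation, where one must separately control $\EE[C_{w_c, n}^2]$ and $\EE[C_{w_c, n}]^2$ and check that the error term $O(\log^2(n)/n)$ from squaring the mean does not overwhelm the $o(1)$ in the variance asymptotic for $C_{\eta, n}$. Since $\log^2(n)/n = o(1)$, this is immediate.
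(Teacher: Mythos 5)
Your proposal is correct and follows exactly the route the paper intends: the paper states the corollary as immediate from Theorem \ref{thm:moments} and Proposition \ref{prop:expAnSn}, and your argument (applying the moment theorem with $p(x)=x$ and $p(x)=x^2$, then checking that the $O(\log^2(n)/n)$ errors are absorbed into the $o(1)$) is precisely the omitted verification. Nothing further is needed.
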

%
%
%
Additionally we obtain a local central limit theorem for the genus distribution. In \cite{ChmuPit},  Chmutov-Pittel point out the consequences of their Theorem 2.1. They string together results from  Sachkov \cite{Sach}, Kolchin \cite{Kol}, Menon \cite{Men}, Bender \cite{Ben}, Canfield \cite{Can}, and Lebowitz et al.\cite{Leb} and observe that
$$\Pr[C_{\eta} = \ell] = \frac{(2+ O(\Var[C_{\pi, n}]^{-1/2}))\exp\left(-\frac{(\ell-\EE[C_{\pi, n}])^2}{2\Var[C_{\pi, n}]}\right) }{\sqrt{2\pi\Var[C_{\pi, n}]}} \hspace{0.5cm} \text{uniformly for } \ell \text{ such that } \frac{\ell-\EE[C_{\pi, n}]}{\sqrt{\Var[C_{\pi, n}]}} \in [-a,a]$$
for fixed $a > 0$. Using this they obtain local central limit theorems for both the distribution of number of vertices and the genus. These consequences follow solely from the total variation convergence of the relevant probability measure in their case to the uniform measure on the alternating group and do not depend on the specific probability measure. Hence, the consequences also apply to our Lemma \ref{lem:distconv}, and so we obtain a local central limit theorem for $G_n$.

\begin{theorem}\label{thm:lclt}
Fix $a\in  \RR_{> 0}$. Then, uniformly for all $\ell$ such that $n-\ell$ is even and $\frac{\ell - \EE[C_{\pi, n}]}{\sqrt{\Var[C_{\pi, n}]}} \in [-a, a]$ we have, 
$$ \Pr\left[G_n = \frac{n}{2} - \frac{\ell}{2} + 1\right] = \frac{(2+ O(\sqrt{\log(n)})\exp\left(- \frac{(\ell-\EE[C_{\pi, n}])^2}{2 \Var[C_{\pi, n}]}\right)}{\sqrt{2 \pi \Var[C_{\pi, n}]}}$$


\end{theorem}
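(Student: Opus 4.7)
The plan is to leverage the linear relationship $G_n = n/2 - C_{w_c,n}/2 + 1$ together with the total variation closeness of $P_{w_c,n}$ to the uniform measure $U_{A_n}$ furnished by Lemma \ref{lem:distconv} in order to transfer the known local central limit theorem for $C_{\eta,n}$ to $C_{w_c,n}$, and thence to $G_n$. In one line: the statement to prove is really a local CLT for the cycle count of a random commutator, and the Chmutov--Pittel derivation shows that such a local CLT is an automatic consequence of total-variation convergence to $U_{A_n}$.

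First I would reduce to a cycle-count statement. Since $G_n = n/2 - C_{w_c,n}/2 + 1$, the event $\{G_n = n/2 - \ell/2 + 1\}$ coincides with $\{C_{w_c,n} = \ell\}$. The parity constraint $n - \ell$ even is automatic because $w_c$ takes values in $A_n$, and $\pi \in A_n$ forces $n - \#\mathrm{cycles}(\pi)$ to be even. Thus it suffices to show, uniformly for $\ell$ with $n-\ell$ even and $(\ell - \EE[C_{\pi,n}])/\sqrt{\Var[C_{\pi,n}]} \in [-a,a]$, that
$$\Pr[C_{w_c,n} = \ell] = \frac{(2 + O(\sqrt{\log n}))\exp\!\left(-\tfrac{(\ell-\EE[C_{\pi,n}])^2}{2\Var[C_{\pi,n}]}\right)}{\sqrt{2\pi\Var[C_{\pi,n}]}}.$$

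Next I would invoke the local CLT for $C_{\eta,n}$ under the uniform distribution on $A_n$, as recorded by Chmutov--Pittel and attributed to the chain Sachkov--Kolchin--Menon--Bender--Canfield--Lebowitz et al., which gives exactly the analogous formula with $\Pr[C_{\eta,n} = \ell]$ on the left and an error of $O(\Var[C_{\pi,n}]^{-1/2})$. The key point emphasized by Chmutov--Pittel is that this conclusion uses \emph{only} the fact that the relevant measure converges to $U_{A_n}$ in total variation; no further structural input enters. I would then apply this principle to $P_{w_c,n}$: by Lemma \ref{lem:distconv}, $\|P_{w_c,n} - U_{A_n}\| = O(1/n)$, so for the event $E_\ell = \{\pi \in A_n : \#\mathrm{cycles}(\pi) = \ell\}$,
$$\bigl|\Pr[C_{w_c,n} = \ell] - \Pr_{U_{A_n}}[C_{\eta,n} = \ell]\bigr| = \bigl|P_{w_c,n}(E_\ell) - U_{A_n}(E_\ell)\bigr| = O(1/n).$$

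Finally I would combine the two displays. Since $\Var[C_{\pi,n}] \sim \log n$ by Proposition \ref{prop:expAnSn}, the Gaussian main term is of order $1/\sqrt{\log n}$ on the window $[-a,a]$, which dominates the additive $O(1/n)$ correction; absorbing this correction and the prior $O(\Var[C_{\pi,n}]^{-1/2})$ relative error into a single error term yields the stated form. The main obstacle, such as it is, is the bookkeeping of relative versus additive errors in this last step: one must check that the $O(1/n)$ from the TV bound and the $O(1/\sqrt{\log n})$ from the uniform local CLT combine to give an error consistent with the paper's convention in the numerator, and that the estimate is uniform in $\ell$ over the compact window determined by $a$. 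Everything else is a direct substitution and an appeal to cited results.
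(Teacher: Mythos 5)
Your proposal is correct and follows essentially the same route as the paper: the paper likewise observes that the Chmutov--Pittel local CLT for cycle counts under $U_{A_n}$ depends only on total variation convergence to the uniform measure on $A_n$, and then transfers it to $P_{w_c,n}$ via Lemma \ref{lem:distconv} and the identity $G_n = \frac{n}{2} - \frac{C_{w_c,n}}{2} + 1$. Your explicit bookkeeping of the additive $O(1/n)$ error against the $\Theta(1/\sqrt{\log n})$ Gaussian main term is a detail the paper leaves implicit, but it is the same argument.
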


Theorem \ref{thm:lclt} together with Corollary \ref{cor:expected} is precisely the Genus Theorem.

\subsection{Number of cone points} Let $s_n: S_n \times S_n \rightarrow \NN$ be the random variable that counts the number of cone points of a random surface $(\sigma, \tau) \in S_n \times S_n$. As a Corollary to Theorem \ref{thm:moments}, we also get the expected value of $s_n$.

\begin{corollary}\label{cor:conepoints} The expected number of cone points on a random STS is given by,
$$\EE[s_n] = \log(n) + \gamma -1 + o(1)$$
\end{corollary}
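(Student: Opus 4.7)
The plan is to decompose $s_n$ as $s_n = C_{w_c, n} - F_n$, where $F_n$ counts the fixed points of the commutator $[\sigma, \tau]$. This is immediate from Proposition~\ref{prop:TopFromCommutator}: the fixed points of $[\sigma,\tau]$ correspond to vertices with cone angle $2\pi$ (the non-cone points), the cycles of length $\ge 2$ correspond to the cone points, and together they account for all vertices, totalling $C_{w_c,n}$.

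For the first piece I would apply Theorem~\ref{thm:moments} with the polynomial $p(x) = x$ (non-negative and non-decreasing on $x\ge 0$), together with Proposition~\ref{prop:expAnSn}, to obtain
$$\EE[C_{w_c, n}] = \EE[C_{\eta, n}] + O\!\left(\tfrac{\log n}{n}\right) = \log(n) + \gamma + o(1).$$

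For the second piece I would turn to character theory. The permutation representation of $S_n$ decomposes as trivial plus standard, so $\mathrm{fix}(\pi) = \chi^{(n-1,1)}(\pi) + 1$, where $\chi^{(n-1,1)}$ is the character of the standard irreducible of dimension $n-1$. Combining the Frobenius formula \eqref{eqn:probspecialform} for $P_{w_c, n}$ with the orthogonality of irreducible characters of $S_n$ yields the clean identity
$$\sum_\pi P_{w_c, n}(\pi)\,\chi(\pi) \;=\; \frac{1}{\chi(1)}$$
for every irreducible character $\chi$; specialising to $\chi = \chi^{(n-1,1)}$ gives $\EE[F_n] = 1 + \tfrac{1}{n-1}$. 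Subtracting this from the expected vertex count then produces
$$\EE[s_n] \;=\; \log(n) + \gamma - 1 + o(1),$$
which is the claim.

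The only mild subtlety is that Theorem~\ref{thm:moments} controls the polynomial moments of $C_{w_c,n}$ but does not directly access $F_n$, so an independent handle on $\EE[F_n]$ is needed; the Frobenius--orthogonality identity above delivers this essentially for free, even with an $O(1/n)$ error term, so there is no genuine obstacle.
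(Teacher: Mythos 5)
Your proof is correct and follows essentially the same route as the paper: the same decomposition $s_n = C_{w_c,n} - F_{w_c,n}$ via Proposition~\ref{prop:TopFromCommutator}, and the same appeal to Theorem~\ref{thm:moments} and Proposition~\ref{prop:expAnSn} for the vertex count. The only difference is that where the paper cites Nica (Example 3.2.3 of \cite{Nic}) for $\EE[F_{w_c,n}] = 1 + \tfrac{1}{n-1}$, you derive it directly from Frobenius's formula \eqref{eqn:probspecialform} and character orthogonality applied to $\mathrm{fix} = 1 + \chi^{(n-1,1)}$, which is a valid and self-contained replacement.
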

\begin{proof}
Let $S(\sigma, \tau)$ be a square-tiled surface. Then, from Proposition \ref{prop:TopFromCommutator}, we know that the number of cone points (i.e. vertices with angle greater than  $2\pi$) is given by $\sum_{\ell = 2}^n \xi_\ell$ when the cycle type of the commutator is $(1^{\xi_1}, \dots, n^{\xi_n})$.
Define $F_{w_c, n}: S_n \times S_n \rightarrow \ZZ_{\geq 0}$ such that $F_{w_c, n} (\sigma, \tau) = \# \text{ fixed points of }[\sigma, \tau]$. Then, 
$$s_n(\sigma, \tau) =\sum_{\ell = 2}^n \xi_\ell = C_{w_c, n}(\sigma, \tau) - F_{w_c, n}(\sigma, \tau)$$
Using Theorem \ref{thm:moments}
$$ \EE (s_n) = \EE(C_{w_c,n}) - \EE(F_{w_c, n}) = \EE(C_{\eta, n}) - 1 - \frac{1}{n-1} + O\left(\frac{\log(n)}{n}\right)= \log(n) + \gamma - 1 +o(1) $$
where we use the fact that $\EE(F_{w_c, n}) = 1+1/(n-1)$ for $n > 4$ as shown by A. Nica in \cite{Nic}, Example 3.2.3. 
\end{proof}

\subsection{Distribution among strata} It is well known (for instance, from Theorem \ref{thm:shresthawang}) that for any stratum $\calH(\alpha_1, \dots, \alpha_s)$ of genus $g$ translation surfaces, $\STS_n \cap \calH(\alpha_1, \dots, \alpha_s) \neq \emptyset$ as long as $n \geq 2g-2+s$. A natural question then is to ask how random square-tiled surfaces distribute among the various strata. The next theorem answers this question, but before we state the theorem, we first recall the definition of the \textbf{Poisson-Dirichlet distribution} \cite{Gam}. Let $B_1, B_2, \dots $ be independent uniformly distributed random variables on $[0,1]$. Let $G = (G_1, G_2, \dots)$ be defined as,

$$G_1 = B_1; \quad G_2 = (1-B_1)B_2; \quad G_i = (1-B_1)(1-B_2) \dots (1-B_{i-1})B_i.$$

The random sequence $G$ describes the lengths of the pieces broken off in a random stick breaking process: start off with a unit length stick and then break off a piece of length $B_1$ on the left leaving a piece of length $(1-B_1)$. From this, break off a piece of length $B_2(1-B_1)$ on the left leaving $(1-B_1)(1-B_2)$, and so on. $G$ is then a a random variable on the infinite simplex
$$ \Delta = \{ x \in \RR^\infty: x_i \geq 0, \sum_i^\infty x_i = 1\}.$$
The distribution of the ordered version of $G$, $(G_{(1)}, G_{(2)}, \dots)$ with $G_{(1)} \geq G_{(2)} \geq \dots$ is what is known as the \textbf{Poisson-Dirichlet distribution}. We are now ready to state the next theorem:

\begin{theorem}[Distribution among strata]\label{thm:stratadist}The distribution of vertex angles for random square-tiled surfaces converges to Poisson-Dirichlet distribution.
\end{theorem}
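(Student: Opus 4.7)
The plan is to reduce this statement to the classical Shepp--Lloyd/Kingman theorem on cycle lengths of uniform random permutations, using Lemma \ref{lem:distconv} to transport the limit from $U_{A_n}$ to the commutator measure $P_{w_c, n}$.

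By Proposition \ref{prop:TopFromCommutator}, the multiset of vertex angles of $S(\sigma, \tau)$ divided by $2\pi$ is exactly the multiset of cycle lengths of the commutator $[\sigma, \tau]$, whose sum is $n$. Let $T_n \colon S_n \to \Delta$ send a permutation to its weakly decreasing sequence of cycle lengths, padded with zeros and divided by $n$. Then the distribution of the sorted normalized vertex angles on a random STS is precisely the pushforward $T_{n*}P_{w_c, n}$. By the classical Shepp--Lloyd theorem (see e.g. Arratia--Barbour--Tavar\'e), $T_{n*}U_{S_n}$ converges weakly to the Poisson--Dirichlet distribution on $\Delta$, and the same limit holds for $T_{n*}U_{A_n}$: the restriction to $A_n$ imposes only a parity constraint on the number of even-length cycles, which is asymptotically negligible for the sorted normalized sequence (this can be extracted, for example, from the Feller coupling conditioned on evenness, or from an application of the Ewens sampling formula with the sign restriction).

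Finally, since total variation distance is nonincreasing under pushforwards, Lemma \ref{lem:distconv} yields
\[
\|T_{n*}P_{w_c, n} - T_{n*}U_{A_n}\|_{TV} \leq \|P_{w_c, n} - U_{A_n}\|_{TV} = O(n^{-1}),
\]
and since total variation convergence implies weak convergence, $T_{n*}P_{w_c, n}$ also converges weakly to Poisson--Dirichlet, which is exactly the assertion of the theorem. The one subtle point is the passage from the classical $S_n$-version of Shepp--Lloyd to its $A_n$-analogue; everything else in the argument is formal once Proposition \ref{prop:TopFromCommutator} and Lemma \ref{lem:distconv} are in hand.
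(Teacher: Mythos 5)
Your argument is essentially the paper's proof: both reduce the statement to the normalized, ordered cycle lengths of the commutator via Proposition \ref{prop:TopFromCommutator} and transfer the Poisson--Dirichlet limit from $U_{A_n}$ to $P_{w_c,n}$ using Lemma \ref{lem:distconv}. The only divergence is at the step you yourself flag as subtle: rather than deriving the $A_n$ version of the Shepp--Lloyd limit from the classical $S_n$ statement, the paper cites Gamburd's Corollary 4.1, which establishes the $A_n$ case directly.
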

\begin{proof}
In \cite{Gam}, Corollary 4.1, Gamburd shows that the distribution of normalized, ordered cycle lengths of a uniformly distributed permutation in $A_n$ converges to the Poisson-Dirichlet distribution as $n \rightarrow \infty$.

Applying Lemma \ref{lem:distconv} and the triangle inequality, we conclude that the distribution of normalized, ordered, cycle lengths of a $P_{w_c,n}$-distributed permutation in $A_n$ converges to the Poisson-Dirichlet distribution. Recall that the $P_{w_c, n}$-distribution in $A_n$ is precisely the distribution of random commutators obtained from two uniformly distributed random permutations from $S_n$. 

Then, finally, Proposition \ref{prop:TopFromCommutator} relates the cycle lengths of random commutators to the angles around the vertices of random STSs and we obtain the theorem.
\end{proof}

Given a fixed $n$, there are only finitely many strata $\calH(\alpha)$ in which STSs with $n$ squares can live. Ideally, we would like to be able to approximate the probability of falling in each of these strata. However, Theorem \ref{thm:stratadist} does not allow us to carry out such an approximation. Nevertheless, one can use Lemma \ref{lem:distconv} to calculate the asymptotic density as $n \rightarrow \infty$ of STSs in  principal strata (the strata corresponding to surfaces with all cone angles $4\pi$) or minimal strata (the strata corresponding to surfaces with exactly one cone point). 

\begin{proposition}\label{prop:principal}Let $S(\sigma, \tau) \in \STS_n$ given by $(\sigma, \tau) \in S_n \times S_n$. Then, as $n \rightarrow \infty$, 

$$\Pr[S(\sigma, \tau) \text{ belongs to a principal stratum}] \rightarrow 0;\qquad \Pr[S(\sigma, \tau) \text{ belongs to a minimal stratum}] \rightarrow 0.$$
\end{proposition}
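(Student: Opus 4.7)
The plan is to translate each of the two stratum-membership events into a cycle-type condition on the commutator $[\sigma,\tau]$ via Proposition \ref{prop:TopFromCommutator}, then to replace $P_{w_c,n}$ by the uniform measure $U_{A_n}$ on $A_n$ using Lemma \ref{lem:distconv}, and finally to appeal to Proposition \ref{prop:transpositions} to conclude.

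For the first bound I would observe that a connected STS $S(\sigma,\tau)$ belongs to a principal stratum $\calH(1,\ldots,1)$ precisely when every cone point has angle exactly $4\pi$. By Proposition \ref{prop:TopFromCommutator} this forces the cycle type of $[\sigma,\tau]$ to consist only of fixed points and $2$-cycles, with at least two $2$-cycles (since a nonempty principal stratum has $2g-2 \geq 2$ cone points). Dropping the connectedness hypothesis only enlarges the event, so
$$\Pr[S(\sigma,\tau)\text{ belongs to a principal stratum}] \;\leq\; P_{w_c,n}\bigl[\eta \text{ is a product of at least two disjoint transpositions and fixed points}\bigr].$$
An entirely parallel argument for a minimal stratum $\calH(2g-2)$ with $g \geq 2$ shows that the commutator must consist of a single cycle of odd length at least $3$ together with fixed points, yielding the analogous upper bound on $\Pr[S(\sigma,\tau) \text{ belongs to a minimal stratum}]$ in terms of $P_{w_c,n}$ of that event.

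Both of these events are class functions on $S_n$ supported entirely on $A_n$, so Lemma \ref{lem:distconv} lets me replace $P_{w_c,n}$ by $U_{A_n}$ at a cost of only $O(1/n)$. Proposition \ref{prop:transpositions} then states precisely that under $U_{A_n}$ the probability of being a product of disjoint transpositions (with fixed points) and the probability of being a single cycle (with fixed points) each tend to $0$ as $n \to \infty$, so both desired limits follow.

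Since the proof is essentially an assembly of ingredients already in hand, there is no substantial obstacle. The only point requiring a little care is the first step — verifying that the cycle-type conditions extracted from the definitions of principal and minimal strata (including the parity constraints forcing $[\sigma,\tau]$ into $A_n$) line up exactly with the events handled by Proposition \ref{prop:transpositions}.
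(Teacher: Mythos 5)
Your proposal is correct and follows essentially the same route as the paper: translate stratum membership into a cycle-type condition on $[\sigma,\tau]$ via Proposition \ref{prop:TopFromCommutator}, drop connectedness to get an upper bound, pass from $P_{w_c,n}$ to $U_{A_n}$ using Lemma \ref{lem:distconv}, and conclude with Proposition \ref{prop:transpositions}. The extra care you take with the precise cycle-type constraints (at least two transpositions; a single odd-length cycle) is harmless but not needed, since the upper-bounding events handled by Proposition \ref{prop:transpositions} already suffice.
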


\begin{proof}
Using Proposition \ref{prop:TopFromCommutator}, we begin by noting that $S(\sigma, \tau)$ is in a principal stratum (i.e., has all cone angles $4 \pi$) if and only if $S(\sigma, \tau)$ is connected and $[\sigma, \tau]$ is a product of disjoint transpositions (and 1-cycles). Similarly, $S(\sigma, \tau)$ is in a minimal stratum (i.e., has exactly one cone point) if and only if $S(\sigma, \tau)$ is connected and $[\sigma, \tau]$ is a non-trivial cyclic permutation.
Hence, using Lemma \ref{lem:distconv} and Proposition \ref{prop:transpositions},
\begin{align*}\Pr[S(\sigma, \tau) \text{ belongs to a principal stratum} ] &\leq \Pr[[\sigma, \tau] \text{ is a product of disjoint transpositions}]\\ &= \Pr[\eta \in A_n \text{ is a product of disjoint transpositions}] + O(n^{-1})\\&\rightarrow 0.
\end{align*}
Similarly,
\begin{align*}\Pr[S(\sigma, \tau) \text{ belongs to a minimal stratum} ] &\leq \Pr[[\sigma, \tau] \text{ is a non-trivial cyclic permutation}]\\ &= \Pr[\eta \in A_n \text{ is a non-trivial cyclic permutation}] + O(n^{-1})\\&\rightarrow 0.
\end{align*}
\end{proof}

It is interesting to compare Proposition \ref{prop:principal} with what happens in the general case of translation surfaces of a fixed genus. It is known that a stratum $\calH(\alpha_1, \dots, \alpha_s)$ of genus $g$ translation surfaces with $s$ cone points is a complex orbifold of dimension $2g-2+s$. Hence the principal stratum (the one with the most cone points) has the highest dimension among genus $g$ strata, and the minimal stratum has the lowest. So, if we fix the genus, we expect a generic translation surface to fall in the principal stratum. However, when the genus is not fixed, and we consider simply square-tiled surfaces, Proposition \ref{prop:principal} asserts the opposite. In fact, the computation in Proposition \ref{prop:transpositions} suggests that the  probability of a random STS falling in a minimal stratum is higher than the probability of it falling in a principal stratum.

\section{Proof of Geometry result}\label{sec:geometry}

This section will be devoted to proving the following theorem:
\holonomy

We begin with the following lemma which gives sufficient criterion on the number of squares for an STS to be a visibility torus. For connected surfaces, this lemma is part (3) of Theorem \ref{thm:shresthawang}.

\begin{lemma}\label{lem:viscriterion}Let $S$ be a (not necessarily connected) $n$-square-tiled surface with $s$ cone points. Let $g = 1-\chi(S)/2$ where $\chi(S)$ is the Euler characteristic. If $4g + 2s - 4 > n$, then $S$ is a visibility torus.
\end{lemma}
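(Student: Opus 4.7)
The plan is to reduce the statement to the connected case, which is part (3) of Theorem~\ref{thm:shresthawang}. Decompose $S = S_1 \sqcup \dots \sqcup S_k$ into its connected components, and let $n_i$, $s_i$, $g_i$ denote the number of squares, cone points, and genus of $S_i$ respectively. First I would dispose of the easy case: if some component $S_i$ is a square torus (so $n_i = 1$, $s_i = 0$, $g_i = 1$), then under the paper's conventions its (marked point) holonomy set is all of $\Prim$, hence $\Prim \subseteq \Hol(S_i) \subseteq \Hol(S)$ and $S$ is already a visibility torus. So from this point on assume every component carries at least one cone point, i.e.\ $s_i \geq 1$ for all $i$, and in particular no $S_i$ is a square torus.

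The core step is an additivity computation using the Euler characteristic. From $\chi(S) = \sum_i \chi(S_i) = \sum_i (2 - 2 g_i) = 2k - 2 \sum_i g_i$ we get
\[ g \;=\; 1 - \tfrac{\chi(S)}{2} \;=\; 1 + \sum_{i=1}^{k} (g_i - 1). \]
Plugging this together with $n = \sum_i n_i$ and $s = \sum_i s_i$ into the hypothesis $4g + 2s - 4 > n$ and simplifying (the constant $-4$ absorbs the $-4k$ contribution, leaving one $-4$ per component) yields
\[ \sum_{i=1}^{k} \bigl( 4 g_i + 2 s_i - 4 - n_i \bigr) \;>\; 0. \]
Consequently there is at least one index $j$ with $4 g_j + 2 s_j - 4 > n_j$.

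For such an $S_j$, the connected case of Theorem~\ref{thm:shresthawang} now applies: part~(1) gives $n_j \geq 2 g_j + s_j - 2$, so $n_j$ lies in the range covered by parts~(2) and~(3), namely $2g_j + s_j - 2 \leq n_j \leq 4 g_j + 2 s_j - 5$. Both parts guarantee $\Prim \subseteq \Hol(S_j)$, and since $\Hol(S) \supseteq \Hol(S_j)$ we conclude that $S$ is a visibility torus. There is no real obstacle here beyond bookkeeping: the only thing to be careful about is that the global quantities $n$, $s$, $g$ split additively in the way indicated, which is why the square-torus components (with $s_i = 0$) must be peeled off first before invoking the connected theorem, whose statement implicitly assumes each component has at least one singularity.
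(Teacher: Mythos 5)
Your proof is correct and takes essentially the same route as the paper's: additivity of $\chi$, $n$, and $s$ over connected components forces some component $S_j$ to satisfy $4g_j + 2s_j - 4 > n_j$, and the connected case of Theorem~\ref{thm:shresthawang} finishes. Your extra care --- peeling off torus components first and invoking part~(2) for the boundary case $n_j = 2g_j + s_j - 2$ --- is a mild tightening of the paper's argument, which cites only part~(3).
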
 
\begin{proof}
Assume $S(\sigma, \tau)$ has $\ell \geq 1$ connected components with genera $g_1, \dots, g_\ell$ and Euler characteristic $\chi_1, \dots, \chi_\ell$. Also let $s_1, \dots, s_\ell$ be the cone points and $n_1, \dots, n_\ell$ be the number of squares in each of the components. Then,
$$g = 1 - \frac{\chi(S)}{2} = 1 - \frac{\sum_{i=1}^\ell \chi_i}{2} = \sum_{i=1}^\ell g_i - (\ell-1)$$
since $g_i = 1 - \frac{\chi_i}{2}$. Now, assume $4g_i + 2s_i - 4 \leq n_i$ for each $i$. Note $\sum_i n_i = n$.  This then implies, 
$$4\sum_{i=1}^\ell  g_i + 2\sum_{i=1}^\ell s_i - 4l = 4g+ 2 s - 4 \leq n,$$
a contradiction. So there must exist a connected component for which $4g_i + 2s_i - 4 > n$. By Theorem \ref{thm:shresthawang} part (3), this connected component is a visibility torus. Hence, regardless of what happens in other components, $S(\sigma, \tau)$ is a visibility torus.
\end{proof}

Next we give the proof of the Holonomy Theorem.
\begin{proof}[Proof of \ref{thm:holonomy}](\emph{Holonomy}) We first prove that
$\Pr[S(\sigma, \tau) \text{ is a holonomy torus}] = \frac{1}{e} + O(n^{-1})$. Define,
$$X_n := \{(\sigma, \tau) \in S_n \times S_n| [\sigma, \tau] \text{ is a derangement}\},$$
$$ Y_n := \{(\sigma, \tau) \in S_n \times S_n| \text{ fixed points of }[\sigma, \tau] = \text{ fixed points of }\sigma = \text{ fixed points of }\tau \}.$$
By Proposition \ref{prop:HolFromComb}, 
\begin{align*}
\Pr[S(\sigma, \tau) \text{ is a holonomy torus}] = \Pr[(\sigma, \tau) \in X_n] +  \Pr[(\sigma, \tau) \in Y_n] 
\end{align*}
We note that for all $(\sigma, \tau) \in Y_n$, $\ideal{\sigma, \tau}$ is not transitive (for instance, the common fixed points are fixed by the entire group). So, by Dixon \cite{Dix}, we know, $\Pr[(\sigma, \tau) \in Y_n] = O(n^{-1})$. 
On the other hand, by Lemma \ref{lem:distconv} and Proposition \ref{prop:derangementAn},  $\Pr[(\sigma, \tau) \in X_n] = e^{-1} + O(n^{-1})$. So, we conclude,
$$\Pr[S(\sigma, \tau) \text{ is a holonomy torus}] = \frac{1}{e} + O(n^{-1}).$$

(\emph{Visibility}) Next we prove that,
$\Pr[S(\sigma, \tau) \text{ is a visibility torus}] = 1 - O\left(\frac{n}{2^{n/2}}\right).$

Recall the random variables $G_n:=G_{n,4}$ and $s_n:=s_{n,4}$ that count the genus (in the connected case) and the number of cone points respectively of a random $S \in \STS_n$. By Lemma \ref{lem:viscriterion} we observe that 
$$\Pr[S(\sigma, \tau) \text{ is a visibility torus}]> \Pr[4 G_n + 2s_n - 4 > n]$$

Now, $\Pr[4 G_n + 2s_n - 4 > n] > \Pr[4 G_n - 4 > n] = \Pr[G_n > (n+4)/4]$. However,
\begin{align*}
 \Pr\left[G_n > \frac{n+4}{4}\right] &= \Pr\left[\frac{n}{2} - \frac{C_{w_1,n}}{2} + 1 > \frac{n+4}{4}\right] = 1-\Pr\left[C_{w_1,n}\geq \frac{n}{2}\right] =  1 - O\left(\frac{n}{2^{n/2}}\right)
\end{align*}
where the last equality is due to Lemma \ref{lem:largedev}. 
So,
 $$\Pr[S(\sigma, \tau) \text{ is a visibility torus}] \geq \Pr\left[G_n > \frac{n+4}{4}\right] = 1 - O\left(\frac{n}{2^{n/2}}\right)$$
which implies $\Pr[S(\sigma, \tau) \text{ is a visibility torus}] = 1 - O\left(\frac{n}{2^{n/2}}\right)$. Therefore, we conclude that a random STS is asymptotically almost surely a visibility torus.
\end{proof}

\section{Generalization to Polygon-tiled Surfaces}\label{sec:PTS}
In this section we generalize the theory of square-tiled surfaces to polygon-tiled surfaces. In particular, we generalize the Genus Theorem and Theorem \ref{thm:moments}. 

\subsection{Randomizing model for PTSs}\label{sec:randmodelGEN}
\begin{wrapfigure}{r}{0.5\textwidth}
\begin{tikzpicture}
    [oct/.style=
  {shape=regular polygon, regular polygon sides=10, draw, minimum width=4cm, shape border rotate=90}]
  \node[oct] at (0,0) (i) {$i$};
%
\node at (12*18:3cm){$\sigma_5(i)$};
\node at (8*18:3cm){$\sigma_2(i)$};
\node at (16*180/10:3cm){$\sigma_3(i)$};
\node at (0:3cm){$\sigma_1(i)$};
\node at (4*180/10:3cm){$\sigma_4(i)$};

\foreach \vert [count = \i] in {$v_1$, $v_2$, $v_3$, $v_4$, $v_5$, $v_6$, $v_7$, $v_8$, $v_9$, $v_{10}$}{
\node at (\i*2*18+18-36:1.7cm){\vert};
}
\foreach \edge [count = \i] in {$e_1$,$e_2$,$e_3$,$e_4$,$e_5$,$e_6$,$e_7$,$e_8$,$e_9$,$e_{10}$}
{ \node at (\i*2*18 - 36:1.7cm){\edge};
}
%
%

\begin{scope}
\clip (-3.5,-3.5) rectangle (3,3);
\node[oct] at (0:3.8) {};
\node[oct] at (8*18:3.8){};
\node[oct] at (16*18:3.8){};
\node[oct] at (4*18:3.8){};
\node[oct] at (12*18:3.8){};
\end{scope}


	\end{tikzpicture}
	\caption{Associating permutations describing the gluings of a decagon-tiled surface. Since $e_6$ is opposite to $e_1$, the decagon glued to side $e_6$ is $\sigma_1^{-1}(i)$ and so on for the other sides whose neighbors are not shown.}
	\label{fig:2kscheme}
\vspace{-0.2cm}
\end{wrapfigure}
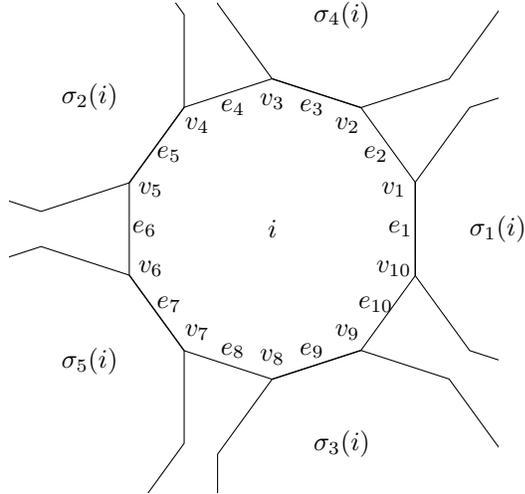
Similar to square-tiled surfaces, for any $S \in \PTS_{n,2k}$ we can encode the gluings by a $k$-tuple of permutations. First label the polygons $\{1, \dots, n\}$.  Next, for each polygon, label and orient the edges $e_1, \dots, e_{2k}$, counterclockwise starting at the right vertical edge. Also, we label the corners $v_1, \dots, v_{2k}$ counterclockwise starting at the top of edge $e_1$. Then, define $\sigma_1 \in S_n$ such that $\sigma_1(i)$ is the polygon glued to edge $e_1$ of polygon $i$. This ensures that the polygon $\sigma_1(i)$ is always the neighbor to the right of polygon $i$, directly generalizing the square-tiled case. Next, $\sigma_2 \in S_n$ is defined such that $\sigma_2(i)$ is the polygon glued to edge $e_k$ of polygon $i$ and  $\sigma_3 \in S_n$ such that $\sigma_3(i)$ is the polygon glued to edge $e_{2k-1}$. Continue on so that $\sigma_{l}(i)$ is the polygon glued to edge $e_{(l-1)(k-1)+1 \mod 2k}$ of polygon $i$. This is illustrated in Figure \ref{fig:2kscheme} for $k=5$. Note that, just as we needed only two permutations in the square-tiled case, it suffices to describe the gluings associated to $k$ of the non-parallel sides. The gluings on the opposite parallel sides to the ones just described are encoded by the corresponding inverse permutations. See Figure \ref{fig:hexocttiled} for some examples.

Conversely, given a $k$ tuple of permutations, $(\sigma_1, \dots, \sigma_k) \in \prod^k S_n$, one can associate to such a tuple a polygon-tiled surface $S(\sigma_1, \dots, \sigma_k) \in \PTS_{n,2k}$. Again, this surface is connected if and only if the subgroup generated by the permutations $\ideal{\sigma_1, \dots, \sigma_k} \subset S_n$, is a transitive subgroup and hence, a uniformly distributed $k$-tuple of permutations in $S_n$ gives a connected surface asymptotically almost surely as $n \rightarrow \infty$. So, $\prod^{k} S_n$ with the uniform measure  is a combinatorial model for random polygon-tiled surfaces built out of $n$ $2k$-gons.

\subsection{Genus of PTSs from the randomizing model}

Similar to the square-tiled case, in order to determine the genus of a random PTS in $\PTS_{n, 2k}$ we start with the Euler characteristic formula. For any $S \in \PTS_{n, 2k}$, we have $n$ faces and $kn$ edges so that the genus is determined by the number of the vertices. 

To count the number of vertices, we can use the next proposition. Essentially, it tells us that the products $c:=\sigma_1 \dots \sigma_k  \sigma_1^{-1} \dots \sigma_k^{-1}$, $a:=\sigma_1 \dots \sigma_k$ and $b:=\sigma_1^{-1} \dots \sigma_k^{-1}$ capture the vertex equivalence classes when $(\sigma_1, \dots, \sigma_k)$ describes a polygon-tiled surface. When $k=2$, note that $c$ is exactly the commutator and the following proposition is seen as a direct generalization of Proposition \ref{prop:vertexgluing}.

\begin{proposition}[Combinatorial condition for identifying vertices]\label{prop:vertexgluingGEN}Let $S \in \PTS_{n,2k}$ be given by $(\sigma_1, \dots, \sigma_k) \in \prod^k S_n$. Let $a, b, c$ be the products of permutations as defined above.
\begin{enumerate}
\item If $k$ is even, corner $v_{k+1}$ of polygon $i$ is glued to corner $v_{k+1}$ of polygon $j$ if and only if there exists $\ell\geq 1$ such that $c^\ell(i) = j$. 
\item If $k$ is odd, corner $v_{k+1}$ of polygon $i$ is glued to corner $v_{k+1}$ of polygon $j$ if and only if there exists $\ell\geq 1$ such that $a^\ell(i) = j$. Similarly, corner $v_1$ of polygon $i$ is glued to corner $v_1$ of polygon $j$ if and only if there exists $\ell\geq 1$ such that $b^\ell(i) = j$.
\end{enumerate} 
\end{proposition}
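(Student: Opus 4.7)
The plan is to generalize the argument of Proposition~\ref{prop:vertexgluing} by interpreting one application of the relevant word ($a$, $b$, or $c$) as a walk once around a specific vertex of the polygon-tiled surface. The parity of $k$ will control which word closes the loop back to the starting vertex label, which is why the proposition splits into two cases.

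I would start with a vertex-labeling lemma. Using the convention that $v_l$ is the vertex between $e_l$ and $e_{l+1}$ (all indices mod $2k$), parallel-opposite translation gluing of $e_m$ of polygon $i$ to $e_{m+k}$ of a neighbor $p$ identifies $v_{m-1}$ of $i$ with $v_{m+k}$ of $p$ and $v_m$ of $i$ with $v_{m+k-1}$ of $p$. Consequently, the ``one-step clockwise walk'' around a corner $v_l$ of $i$ --- crossing the edge $e_{l+1}$ --- lands at the corner $v_{l+k+1 \bmod 2k}$ of the neighboring polygon. Iterating $j$ times from $v_{k+1}$ lands at $v_{k+1+j(k+1) \bmod 2k}$, and since $k(k+1)\equiv 0 \pmod{2k}$ for $k$ odd while $k(k+1)\equiv k \pmod{2k}$ for $k$ even, we return to $v_{k+1}$ after exactly $k$ steps when $k$ is odd, or land at $v_1$ after $k$ steps and then return to $v_{k+1}$ after a further $k$ steps when $k$ is even. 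Using the labeling $\sigma_l$ on edge $e_{(l-1)(k-1)+1 \bmod 2k}$, a finite bookkeeping shows that the permutations accumulated along these walks spell out exactly $a = \sigma_1 \cdots \sigma_k$ (for $k$ odd, one loop of $k$ steps) or $c = ab = \sigma_1 \cdots \sigma_k \sigma_1^{-1} \cdots \sigma_k^{-1}$ (for $k$ even, one loop of $2k$ steps). The $v_1$ case for $k$ odd is symmetric: walking counterclockwise (crossing $e_l$ rather than $e_{l+1}$ at each corner) produces the word $b = \sigma_1^{-1} \cdots \sigma_k^{-1}$ after $k$ steps. Specializing to $k=2$ recovers the square-tiled case.

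With the walk-to-word correspondence in hand, both directions of the proposition follow by induction as in Proposition~\ref{prop:vertexgluing}. For $(\Leftarrow)$ the base case $\ell = 1$ is the one-loop walk just described, and the inductive step $c^{\ell+1}(i) = c(c^\ell(i))$ follows by transitivity of vertex identifications (and analogously for $a$ and $b$). For $(\Rightarrow)$ I would induct on the cone angle at the shared vertex: the base case is a vertex of minimal possible cone angle, so that its orbit is a single loop and a single application of the word suffices, and the inductive step swaps two edge identifications to produce a new surface whose analogous word has a shorter cycle through that vertex, exactly as in the square proof.

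The main obstacle will be the finite bookkeeping that verifies the accumulated word along the walk really is $a$, $b$, or $c$. The formula $(l-1)(k-1)+1 \bmod 2k$ places the $\sigma_l$-edges at positions whose pattern depends on $\gcd(k-1, 2k) = \gcd(k-1, 2)$, which is precisely why the proof must distinguish $k$ odd from $k$ even: for $k$ odd the sequence of permutations is the clean $\sigma_k, \sigma_{k-1}, \ldots, \sigma_1$, while for $k$ even it is the interleaved $\sigma_k^{-1}, \ldots, \sigma_1^{-1}, \sigma_k, \ldots, \sigma_1$. Tracking the vertex-index decrement $k+1 \bmod 2k$ alongside the corresponding edge label, and reading off the correct $\sigma_l^{\pm 1}$ at each step, is routine but requires care; the cleanest presentation will split the bookkeeping into the two parity cases, mirroring the bifurcated statement of the proposition.
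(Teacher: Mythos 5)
Your proposal is correct and follows essentially the same route as the paper, which likewise reduces to the square-tiled argument by observing that one application of $c$ (for $k$ even) or of $a$, $b$ (for $k$ odd) corresponds to winding once around the vertex represented by $v_{k+1}$ or $v_1$; your explicit bookkeeping (the step $v_l \mapsto v_{l+k+1 \bmod 2k}$ and the parity analysis via $\gcd(k-1,2k)$) just makes precise what the paper delegates to its octagon figure. One immaterial slip: the counterclockwise walk around $v_1$ accumulates $\sigma_k\cdots\sigma_1 = b^{-1}$ rather than $b$ (it is the \emph{clockwise} walk that spells $b$), but since $b$ and $b^{-1}$ have the same cycles this does not affect the statement.
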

\begin{proof}
When $k$ is even, the proof of the general case is analogous to the proof of Proposition \ref{prop:vertexgluing}. Geometrically, the action of $c$ on polygon $i$ corresponds to winding around the vertex represented by corner $v_{k+1}$ of $i$ as in Figure \ref{fig:vertexfromcommutator}. 

%
%
%
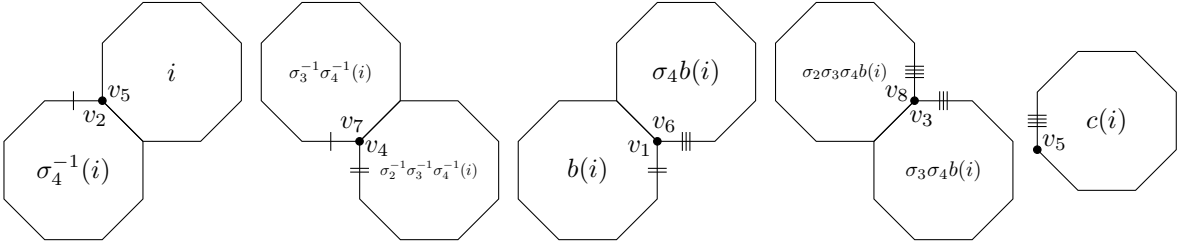
\begin{figure}[h!!]
\begin{minipage}{0.2\textwidth}
\centering
\begin{tikzpicture}[oct/.style=
 {shape=regular polygon, regular polygon sides=8, draw, minimum width=2cm}]
 \node[oct] at (0,0) (i) {$i$};
 \node[oct] at (-6*22.5:1.85cm) {$\sigma_4^{-1}(i)$};
 
 \node at (-7*22.5:0.75cm){$v_5$};
  \draw [fill] (-7*22.5:1cm) circle 	[radius=0.05];
  
  \begin{scope}[shift={(-6*22.5:1.85cm)}]
  \node at (3*22.5:0.75cm){$v_2$};
  \draw (4*22.5:0.8cm) -- (4*22.5:1.05cm);
  
  \end{scope}

\end{tikzpicture}
\end{minipage}
\begin{minipage}{0.2\textwidth}
\centering
\begin{tikzpicture}[oct/.style=
 {shape=regular polygon, regular polygon sides=8, draw, minimum width=2cm}]
 \node[oct] at (0,0) (i) {};
 \node[scale=0.7] at (0,0) {$\sigma_3^{-1}\sigma_4^{-1}(i)$};
 \node[oct] at (-2*22.5:1.85cm) {};
 \node[scale=0.6] at (-2*22.5:1.85cm){$\sigma_2^{-1}\sigma_3^{-1}\sigma_4^{-1}(i)$};
 
 \node at (-3*22.5:0.75cm){$v_7$};
 \draw [fill] (-3*22.5:1cm) circle 	[radius=0.05];
 \draw (-4*22.5:0.8cm) -- (-4*22.5:1.05cm);
 \begin{scope}[shift={(-2*22.5:1.85cm)}]
  \node at (7*22.5:0.75cm){$v_4$};
  \draw [shift={(0,0.025)}] (8*22.5:0.8cm) -- (8*22.5:1.05cm);
  \draw [shift={(0,-0.025)}] (8*22.5:0.8cm) -- (8*22.5:1.05cm);

  \end{scope}

\end{tikzpicture}
\end{minipage}
\begin{minipage}{0.2\textwidth}
\centering
\begin{tikzpicture}[oct/.style=
 {shape=regular polygon, regular polygon sides=8, draw, minimum width=2cm}]
 \node[oct] at (0,0) (i) {$\sigma_4 b(i)$};
 \node[oct] at (-6*22.5:1.85cm) {$b(i)$};
 
 \node at (-5*22.5:0.75cm){$v_6$};
 \draw [fill] (-5*22.5:1cm) circle 	[radius=0.05];
 \draw [shift={(-0.05,0)}](-4*22.5:0.8cm) -- (-4*22.5:1.05cm);
 \draw (-4*22.5:0.8cm) -- (-4*22.5:1.05cm);
 \draw [shift={(0.05,0)}](-4*22.5:0.8cm) -- (-4*22.5:1.05cm);
 \begin{scope}[shift={(-6*22.5:1.85cm)}]
  \node at (1*22.5:0.75cm){$v_1$};
  \draw [shift={(0,0.025)}] (0*22.5:0.8cm) -- (0*22.5:1.05cm);
  \draw [shift={(0,-0.025)}] (0*22.5:0.8cm) -- (0*22.5:1.05cm);

  \end{scope}

\end{tikzpicture}
\end{minipage}
\begin{minipage}{0.2\textwidth}
\centering
\begin{tikzpicture}[oct/.style=
 {shape=regular polygon, regular polygon sides=8, draw, minimum width=2cm}]
 \node[oct] at (0,0) (i) {};
 \node[scale=0.7] at (0,0) {$\sigma_2\sigma_3\sigma_4b(i)$};
 
 \node[oct] at (-2*22.5:1.85cm) {};
 \node[scale=0.8] at (-2*22.5:1.85cm){$\sigma_3\sigma_4b(i)$};

 \node at (-1*22.5:0.75cm){$v_8$};
 \draw [fill] (-1*22.5:1cm) circle 	[radius=0.05];
 \draw [shift={(0,0.075)}](0:0.8cm) -- (0:1.05cm);
 
 \draw [shift={(0,0.025)}](0:0.8cm) -- (0:1.05cm);
 \draw [shift={(0,-0.025)}](0:0.8cm) -- (0:1.05cm);
 \draw [shift={(0,-0.075)}](0:0.8cm) -- (0:1.05cm);

 \begin{scope}[shift={(-2*22.5:1.85cm)}]
  \node at (5*22.5:0.75cm){$v_3$};
 \draw [shift={(-0.05,0)}](4*22.5:0.8cm) -- (4*22.5:1.05cm);
 \draw (4*22.5:0.8cm) -- (4*22.5:1.05cm);
 \draw [shift={(0.05,0)}](4*22.5:0.8cm) -- (4*22.5:1.05cm);
  
 \end{scope}
 
\end{tikzpicture}
\end{minipage}
\begin{minipage}{0.1\textwidth}
\centering
\begin{tikzpicture}[oct/.style=
 {shape=regular polygon, regular polygon sides=8, draw, minimum width=2cm}]
 \node[oct] at (0,0) (i) {$c(i)$};
 \node at (9*22.5:0.75cm){$v_5$};
  \draw [fill] (9*22.5:1cm) circle 	[radius=0.05];
 \draw [shift={(0,0.075)}](8*22.5:0.8cm) -- (8*22.5:1.05cm);
 
 \draw [shift={(0,0.025)}](8*22.5:0.8cm) -- (8*22.5:1.05cm);
 \draw [shift={(0,-0.025)}](8*22.5:0.8cm) -- (8*22.5:1.05cm);
 \draw [shift={(0,-0.075)}](8*22.5:0.8cm) -- (8*22.5:1.05cm);

\end{tikzpicture}
\end{minipage}

\caption{Following corner $v_5$ of $i$ under the generalized commutator $c$ in an octagon-tiled surface. Only the relevant gluings around the corners are shown through which we see that $v_5$ of $i$ is glued to $v_2$ of $\sigma_4^{-1}$ is glued to $v_7$ of $\sigma_3^{-1}\sigma_4^{-1}$ and so on until $v_5$ of $j = c(i)$.}
\label{fig:vertexfromcommutator}

\end{figure}

The case for odd $k$ is similar. Using a geometric viewpoint, the action of $a$ on $i$ corresponds to winding around the vertex represented by corner $v_{k+1}$ and the action of $b$ on $i$ corresponds to winding around the vertex represented by corner $v_1$.\end{proof}



Proposition \ref{prop:vertexgluingGEN} naturally leads to the following observation which is a direct generalization of Proposition \ref{prop:TopFromCommutator}.

\begin{proposition}[Cycle type determines cone point data]\label{prop:TopFromCommutatorGEN}Let $S \in \PTS_{n,2k}$ be given by $(\sigma_1, \dots, \sigma_k) \in \prod^k S_n$. 
\begin{enumerate}
\item If $k$ is even and the cycle type of $\sigma_1 \dots \sigma_k \sigma_1^{-1} \dots \sigma_k^{-1}$ is $(1^{\xi_1}, 2^{\xi_2}, \dots, n^{\xi_n})$, then $S$ has $\xi_l$ vertices with angle $2\pi(k-1) \ell$ for each $\ell=1, \dots, n$.

\item If $k$ is odd and the cycle type of $\sigma_1 \dots \sigma_k$ is $(1^{\xi_1}, 2^{\xi_2}, \dots, n^{\xi_n})$ and the cycle type of $\sigma_1^{-1} \dots \sigma_k^{-1}$ is $(1^{\beta_1}, 2^{\beta_2}, \dots, n^{\beta_n})$, then $S$ has $\lambda_l + \beta_l$ vertices with angle $\pi(k-1)\ell$ for each $\ell = 1, \dots, n$
\end{enumerate}
\end{proposition}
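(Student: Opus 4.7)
The plan is to mirror the proof of Proposition~\ref{prop:TopFromCommutator} from the square-tiled case, appealing to the combinatorial vertex-identification result of Proposition~\ref{prop:vertexgluingGEN} together with a direct geometric computation of the cone angle traced out by one application of the relevant permutation product. The key geometric fact I will use is that each corner of a regular $2k$-gon has interior angle $(k-1)\pi/k$, so the cone angle at any vertex $v$ equals (number of polygon corners meeting at $v$) times $(k-1)\pi/k$.

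For part (1) (even $k$), fix a cycle of $c := \sigma_1 \cdots \sigma_k \sigma_1^{-1} \cdots \sigma_k^{-1}$ of length $\ell$ and pick any $i$ in it. By Proposition~\ref{prop:vertexgluingGEN}, the $v_{k+1}$-corners of the $\ell$ polygons in this cycle are all identified to a single vertex $v$. I will argue, exactly as illustrated in Figure~\ref{fig:vertexfromcommutator}, that a single application of the word $c$ (going from the $v_{k+1}$-corner of $i$ back to the $v_{k+1}$-corner of $c(i)$) winds once around $v$ and passes through exactly $2k$ corners meeting at $v$ (the intermediate corners being the ones of $\sigma_k^{-1}(i),\sigma_{k-1}^{-1}\sigma_k^{-1}(i),\dots,b(i),\sigma_k b(i),\dots,\sigma_2\cdots\sigma_k b(i)$). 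Hence each $c$-application contributes total angle $2k\cdot(k-1)\pi/k = 2(k-1)\pi$, and iterating $\ell$ times gives cone angle $2\pi(k-1)\ell$ around $v$. Summing over cycles of $c$ of length $\ell$ yields $\xi_\ell$ vertices of this angle.

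For part (2) (odd $k$), the same strategy applies but separately to $a := \sigma_1 \cdots \sigma_k$ and $b := \sigma_1^{-1} \cdots \sigma_k^{-1}$, which in the odd-$k$ case individually encode the vertex identifications at $v_{k+1}$-corners and $v_1$-corners respectively (Proposition~\ref{prop:vertexgluingGEN}(2)). The geometric count now shows that a single application of $a$ winds once around a $v_{k+1}$-vertex while traversing only $k$ corners, giving angle $k\cdot(k-1)\pi/k = (k-1)\pi$ per application, and symmetrically for $b$ at a $v_1$-vertex. So a cycle of length $\ell$ in $a$ (resp.\ $b$) yields a vertex of cone angle $(k-1)\pi\ell$, and adding the contributions from both products produces $\xi_\ell + \beta_\ell$ vertices of angle $\pi(k-1)\ell$.

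The main subtlety, and what truly distinguishes the two parts, is justifying why one application of $c$ visits $2k$ corners while one application of $a$ or $b$ visits only $k$. The underlying reason is that for even $k$ the orientations of the gluings around a vertex force only the round trip $ab = c$ to return to a corner of the same type, whereas for odd $k$ the opposite-edge parallel gluings already induce a coherent $2\pi$ rotation that closes up after $k$ steps of $a$ (respectively $b$) alone. I expect carefully tracking this corner-by-corner analogue of the base case in the proof of Proposition~\ref{prop:vertexgluing}, together with an induction on cycle length mirroring Proposition~\ref{prop:TopFromCommutator}, to be the primary technical content; the remaining verification that these cone-angle counts exhaust all vertices of $S$ follows because Proposition~\ref{prop:vertexgluingGEN} accounts for every polygon corner.
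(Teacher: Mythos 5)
Your proposal is correct and follows essentially the same route as the paper: Proposition~\ref{prop:vertexgluingGEN} supplies the bijection between cycles and vertices, and the angle is obtained by computing how much angle a single application of $c$ (resp.\ $a$, $b$) traverses around the vertex, then iterating over the cycle. Your version merely makes explicit the corner count ($2k$ corners of interior angle $(k-1)\pi/k$ per application of $c$, versus $k$ per application of $a$ or $b$) where the paper directly asserts the totals $2\pi(k-1)$ and $\pi(k-1)$, and the paper justifies that these corners exhaust all vertices via the branched-cover structure (one branch point for even $k$, two for odd $k$) rather than by tracking corner types.
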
 
\begin{proof}

When $k$ is even, every vertex in $S$ is represented by corner $v_{k+1}$ of some polygon $i$. This is because every vertex is a pre-image of the lone vertex in the base $2k$-gon surface which can be represented by corner $v_{k+1}$. By Proposition \ref{prop:vertexgluing} disjoint cycles of $c$ are in bijection with the distinct vertices. Following the gluings from $i$ to $c(i)$ corresponds to winding  around the vertex and traversing $2\pi(k-1)$ angle each. Hence, if the associated cycle's length is $\ell$, then the vertex angle is $2 \pi(k-1)\ell$.

The case for odd $k$ is similar. In this case the base surface has two branch points, say $p$ and $q$. One is represented by corner $v_{k+1}$ (say, $p$ is) and the other by $v_1$. Hence, each vertex in the surface that is in the pre-image (under the branch covering map) of $p$ is represented by corners $v_{k+1}$ and vertices in the pre-image of $q$ are represented by corners $v_1$. By Proposition \ref{prop:vertexgluing} cycles of $a$ are then in bijection with vertices in the pre-image of $p$ while cycles of $b$ are in bijection with pre-images of $q$. Following the gluings from $i$ to $a(i)$ and $i$ to $b(i)$ each corresponds to winding around the respective vertices and traversing $\pi(k-1)$ angle each. \end{proof}

Propositions \ref{prop:vertexgluingGEN} and \ref{prop:TopFromCommutatorGEN} motivate the study of the permutations $a = \sigma_1 \dots \sigma_k, b = \sigma_1^{-1}, \sigma_2^{-1}, \dots, \sigma_k^{-1}$ and $c = ab$ in order to understand the genus of random PTSs. To study these products we briefly introduce the general theory of word maps in the next section.

\subsection{Word maps}\label{sec:wordmaps}

Recall that for the square-tiled case, we considered the probability distribution on $A_n$ induced by the commutator map $w_c: S_n\times S_n \rightarrow S_n$. More generally, let $w$ be a word in $F_k = F(x_1, \dots, x_k)$ the free group on $k$ generators. Given a group $G$, we can consider a \textbf{word map}, $w:\prod^k G \rightarrow G$ associated to the word $w \in F_k$ given by substitution: for $w = \prod_{j=1}^r x_{i_j}^{\epsilon_j}$ with $\epsilon_j =\pm 1$, 
$$ w(g_1, \dots, g_k) = \prod_{j=1}^r g_{i_j}$$
For example, if $w \in F(x_1,x_2)$ is the word $w = [x_1,x_2] = x_1x_2x_1^{-1}x_2^{-1}$, then $w(g_1, g_2) = [g_1, g_2]$.
Although word maps are generally defined, we will assume $G$ is finite for our purposes. For any $g \in G$, let $N_w(g) = \#w^{-1}(g)$. Then, any word map induces a probability measure on $G$, which we will denote $P_w$ given by 
$$ P_w(g) = \frac{N_w(g)}{\#G^k}$$
The function $N_w$ satisfies some basic properties:
\begin{itemize}
\item $N_w$ is a class function
\item If $w$ and $w'$ are equivalent under $\Aut(F_k)$, then $N_w \equiv N_{w'}$
\item If $w = x_1$, then $N_w \equiv \#G^{k-1}$
\end{itemize}

Since class functions on a finite group can be written as linear combinations of irreducible characters, we can write 
$ N_w = \sum_{\chi \in \Irr(G)}N_w^\chi\cdot \chi$ for $N_w^\chi \in \CC$. The coefficients $N_w^\chi$ are called the \textbf{Fourier coefficients} of the word map $w$. The study of these Fourier coefficients goes back to Frobenius \cite{Frob} who proved that 
$ N_{[x_1,x_2]}^\chi = \frac{\# G}{\chi(1)}$ and hence gave the formula in (\ref{eqn:probspecialform}). More generally, we are concerned with the Fourier coefficients of the word maps associated to words 
$$w_{c} := x_1 \dots x_k x_1^{-1}\dots x_k^{-1} \hspace{1cm} w_{a} := x_1x_2 \dots x_k \hspace{1cm} w_{b} := x_1^{-1}x_2^{-1} \dots x_k^{-1}.$$
The next proposition simplifies the task to simply understanding $w_c$.
\begin{proposition}\label{prop:w1w2equiv}$N_{w_a} \equiv N_{w_b} \equiv \#G^{k-1}$.
\end{proposition}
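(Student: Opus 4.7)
The plan is to prove the statement by a direct counting argument, observing that in either word $w_a$ or $w_b$ the last factor is uniquely determined by the preceding $k-1$ factors and the required image.

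First I would handle $w_a$. Fix $g \in G$ and consider the fiber
\[
w_a^{-1}(g) = \{(g_1,\ldots,g_k) \in G^k : g_1 g_2 \cdots g_k = g\}.
\]
Given any choice of $(g_1,\ldots,g_{k-1}) \in G^{k-1}$, the equation forces $g_k = (g_1 \cdots g_{k-1})^{-1} g$, which is a unique element of $G$. Conversely, every tuple in $w_a^{-1}(g)$ arises this way. This sets up a bijection between $G^{k-1}$ and $w_a^{-1}(g)$, hence $N_{w_a}(g) = \#G^{k-1}$ for every $g$, i.e.\ $N_{w_a} \equiv \#G^{k-1}$.

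For $w_b$ I would give two equivalent routes and keep whichever reads more cleanly. The direct route is the same argument: for fixed $(g_1,\ldots,g_{k-1})$ the equation $g_1^{-1} g_2^{-1} \cdots g_k^{-1} = g$ rearranges to $g_k = g^{-1} g_{k-1}^{-1} \cdots g_1^{-1}$ (after inverting both sides), so again the last coordinate is uniquely determined. Alternatively, one invokes the second bullet from the list of basic properties of word maps stated just above the proposition: the map on $F_k$ sending each generator $x_i \mapsto x_i^{-1}$ is an element of $\Aut(F_k)$ and carries $w_a$ to $w_b$, so $N_{w_a} \equiv N_{w_b}$ and the result for $w_b$ follows from the result for $w_a$.

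There is no substantive obstacle here; the whole content is the observation that a product $g_1 \cdots g_k$ in a group becomes a bijective function of its last factor once the other factors are fixed. The only decision to make is stylistic, namely whether to present the $w_b$ case as a second counting argument or as a one-line consequence of the $\Aut(F_k)$-invariance property already recorded by the authors.
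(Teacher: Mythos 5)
Your proof is correct, but it proceeds differently from the paper's. You establish $N_{w_a}(g)=\#G^{k-1}$ by exhibiting an explicit bijection between $G^{k-1}$ and the fiber $w_a^{-1}(g)$ (the last coordinate is forced), and likewise for $w_b$. The paper instead never counts anything directly: it chains elementary Nielsen transformations to show $x_1 \sim x_1x_2 \sim \cdots \sim x_1x_2\cdots x_k = w_a$ and $w_a \sim w_b$ (inverting each generator) under $\Aut(F_k)$, and then quotes the two bulleted properties recorded just above the proposition, namely that $N_w$ is an $\Aut(F_k)$-invariant and that $N_{x_1}\equiv \#G^{k-1}$. Your direct argument is more self-contained and arguably more transparent for a reader who has not internalized the word-map formalism; the paper's argument is shorter on the page and reuses machinery it has already set up (and which it needs anyway for $w_c$). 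Your secondary route for $w_b$ via the automorphism $x_i \mapsto x_i^{-1}$ is exactly half of the paper's proof. One cosmetic slip: solving $g_1^{-1}\cdots g_k^{-1}=g$ gives $g_k = g^{-1}g_1^{-1}g_2^{-1}\cdots g_{k-1}^{-1}$, not $g^{-1}g_{k-1}^{-1}\cdots g_1^{-1}$; the order matters in a nonabelian group, but the point you actually use — that $g_k$ is uniquely determined by the other coordinates and $g$ — is unaffected.
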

\begin{proof}
Replacing $x_i$ with its inverse is an elementary Nielsen transformation and an automorphism of $F_k$. So, $w_b  \sim w_a$ under $\Aut(F_k)$. Similarly, replacing $x_1$ by $x_1x_2$ is an elementary Nielsen transformation. Inductively, $x_1 \sim x_1x_2 \dots x_k = w_a \sim w_b$. Hence, $N_{w_a}=N_{w_b} = \#G^{k-1}$.\end{proof}
As a consequence of Proposition \ref{prop:w1w2equiv}, the induced probability measures $P_{w_a, n, 2k}$ and $P_{w_b, n, 2k}$ on $S_n$ induced by $w_j:\prod^k S_n \rightarrow S_n$ (for $j=a,b$) are exactly the uniform measure. The case of $P_{w_c, n, 2k}$, induced by $w_c: \prod^k S_n \rightarrow A_n$ is slightly more complicated.
The coefficients of $N_{w_c}$ are given by the following theorem of Tambour (which was generalized by Parzanchevski-Schul \cite{ParSchul}).

\begin{theorem}[Tambour \cite{Tam}]\label{thm:fouriercoeffs} For $w_c = x_1x_2 \dots x_kx_1^{-1}\dots x_k^{-1} \in F(x_1, \dots, x_k)$, the function $N_{w_c}$ can be expanded into irreducible characters as,
$$N_{w_c} \equiv \begin{cases}\sum_{\chi \in \Irr(G)} \frac{\#G^{k-1}}{\chi(1)^{k-1}} \cdot \chi \text{ for even }k \geq 2 \\ \hspace{1cm} \\ \sum_{\chi \in \Irr(G)} \frac{\#G^{k-1}}{\chi(1)^{k-2}} \cdot \chi \text{ for odd }k \geq 3 \end{cases}$$
\end{theorem}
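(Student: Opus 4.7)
The plan is to compute the Fourier coefficient $N_{w_c}^{\chi}$ for each irreducible character $\chi$ of $G$ by evaluating the character sum
$$S_k := \sum_{g_1,\dots,g_k \in G}\chi\bigl(g_1\cdots g_k\, g_1^{-1}\cdots g_k^{-1}\bigr),$$
since by orthogonality of irreducible characters $N_{w_c}^{\chi} = S_k/|G|$. The main idea is to couple $S_k$ to the auxiliary sum
$$T_k := \sum_{g_1,\dots,g_k \in G}\chi(g_1\cdots g_k)\,\chi(g_1^{-1}\cdots g_k^{-1})$$
via a pair of linked recursions, each driven by Schur's lemma in the form $\sum_{g\in G}\rho(g)\,X\,\rho(g)^{-1}=\frac{|G|}{\chi(1)}\,\Tr(X)\,I$, valid for any matrix $X$ on the representation space of $\rho$.

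For the first recursion, I would rewrite $\chi(w_c)$ as a trace and cyclically rotate it so that the two occurrences of $g_k$ become adjacent, obtaining
$$\chi(w_c(g_1,\dots,g_k)) = \Tr\bigl(\rho(g_k)^{-1}\, B\,\rho(g_k)\,A\bigr),$$
where $B=\rho(g_1\cdots g_{k-1})$ and $A=\rho(g_1^{-1}\cdots g_{k-1}^{-1})$. Summing over $g_k$ via Schur's lemma, and then over $g_1,\dots,g_{k-1}$, produces
$$S_k = \frac{|G|}{\chi(1)}\, T_{k-1}.$$
For the companion recursion, I would sum $T_k$ over its last variable using the convolution identity
$$\sum_{h\in G}\chi(ah)\,\chi(bh^{-1}) = \frac{|G|}{\chi(1)}\,\chi(ab),$$
itself a short consequence of Schur orthogonality of matrix coefficients. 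Applied with $a=g_1\cdots g_{k-1}$ and $b=g_1^{-1}\cdots g_{k-1}^{-1}$, the product $ab$ is exactly $w_c(g_1,\dots,g_{k-1})$, giving
$$T_k = \frac{|G|}{\chi(1)}\, S_{k-1}.$$

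Combining the two recursions yields $S_k = (|G|/\chi(1))^2\,S_{k-2}$ for $k\geq 3$, and the base cases $S_1 = |G|\chi(1)$ (trivially, since $g_1 g_1^{-1}=e$) together with $T_1=|G|$ (Schur orthogonality for irreducible $\chi$) unroll to $S_k = |G|^k/\chi(1)^{k-1}$ for even $k$ and $S_k = |G|^k/\chi(1)^{k-2}$ for odd $k$. Dividing by $|G|$ then yields the Fourier coefficients in the statement, and the parity dichotomy of the theorem reflects precisely the two initial conditions from which the same recursion is unrolled. The only real subtlety is recognizing that $S_k$ cannot close on itself: after the rotation, $A$ and $B$ satisfy $AB \neq I$ in general, so a direct recursion $S_k \to S_{k-1}$ fails, and the creative step is introducing the shadow sum $T_k$ so that the pair $(S_k, T_k)$ is jointly preserved under Schur's lemma.
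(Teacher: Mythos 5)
Your argument is correct, but there is nothing in the paper to compare it against: the paper imports this statement wholesale as a theorem of Tambour \cite{Tam} (with the Parzanchevski--Schul generalization also cited) and gives no proof. What you have written is a complete, self-contained derivation in the classical Frobenius style, and it checks out. The rotated trace $\Tr\bigl(\rho(g_k)^{-1}B\rho(g_k)A\bigr)$ summed over $g_k$ gives $\frac{\#G}{\chi(1)}\chi(g_1\cdots g_{k-1})\,\chi(g_1^{-1}\cdots g_{k-1}^{-1})$ by Schur's lemma; the convolution identity $\sum_h\chi(ah)\chi(bh^{-1})=\frac{\#G}{\chi(1)}\chi(ab)$ closes the loop back to $S_{k-1}$ precisely because $ab=w_c(g_1,\dots,g_{k-1})$; and the seeds $S_1=\#G\,\chi(1)$ and $T_1=\#G$ unroll the two-step recursion $S_k=(\#G/\chi(1))^2S_{k-2}$ to $S_k=\#G^{k}/\chi(1)^{k-1}$ for even $k$ and $S_k=\#G^{k}/\chi(1)^{k-2}$ for odd $k$, which after dividing by $\#G$ are exactly the stated coefficients. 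Your reading of the parity dichotomy as nothing more than the two distinct initial conditions of one recursion is the right way to see why the exponent drops by one for odd $k$, and your observation that a direct recursion $S_k\to S_{k-1}$ fails --- the Schur average produces a product of two traces rather than a single trace of $w_c$ in one fewer variable --- is exactly the point that forces the auxiliary sum $T_k$. One pedantic remark: orthogonality literally gives $N_{w_c}^{\chi}=\overline{S_k}/\#G$ rather than $S_k/\#G$, but since $S_k$ comes out real and positive (and all characters of $S_n$, the only case the paper uses, are real) this costs nothing.
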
  

Note that $P_{w_c, n, 4} = P_{w_c}$ and Theorem \ref{thm:fouriercoeffs} generalizes Frobenius's formula for the Fourier coefficients of the commutator word map.  

Now, given a random polygon-tiled surface $(\sigma_1, \dots, \sigma_k) \in \prod^k S_n$, using Proposition \ref{prop:vertexgluingGEN} we see that the number of vertices equals the number of cycles of the permutation  $w_c(\sigma_1, \dots \sigma_k)$ if $k$ is even, and the sum of the number of cycles of $w_a(\sigma_1\dots \sigma_k)$ and $w_b(\sigma_1 \dots \sigma_k)$ if $k$ is odd. Hence, we define the random variables, for every $k \geq 2$ and $n \geq 1$, 
\begin{equation}\label{eq:numcycles}C_{w_i, n, 2k} = \# \text{ cycles of }w_i(\sigma_1, \dots, \sigma_k)
\end{equation}
where $(\sigma_1, \dots, \sigma_k) \in \prod^kS_n$ is uniformly distributed. Then define $G_{n,2k}: \prod^k S_n \rightarrow \NN$ to be the random variable
$$ G_{n,2k} = \begin{cases}\frac{(k-1)n}{2} - \frac{C_{w_c, n, 2k}}{2} + 1 \text{ for even }k\geq 2\\
\hspace{1cm}\\
\frac{(k-1)n}{2} - \frac{C_{w_a, n, 2k}+ C_{w_b, n, 2k}}{2} + 1 \text{ for odd }k\geq 3
\end{cases} $$
We will drop the subscript $2k$ from the random variables when it is implied by context.

%

\subsection{Generalization of preliminary lemmas} We now generalize the appropriate lemmas from the square-tiled case. In the first lemma we calculate the generating function for the random variable $C_{w_c,n,2k}$ which counts the number of cycles of a $P_{w_c, n, 2k}$-distributed random permutation from $A_n$. Note the case $k=2$ is exactly Lemma \ref{lem:genfcn}. 

\begin{lemma}\label{lem:genfcnGEN}The generating function for $C_{w_c,n,2k}$ is given by,

$$ G(q):= \sum_{\sigma_1, \dots, \sigma_k \in S_n} q^{C_{w_c,n,2k}(\sigma_1, \dots, \sigma_k)} = \begin{cases}(n!)\sum_{\lambda \vdash n} H_\lambda^{k-2}\prod_{r \in \lambda}(q+\cont(r)) \text{ for }k\geq 2 \text{ even}\\ \hspace{1cm} \\(n!)^2\sum_{\lambda \vdash n} H_\lambda^{k-3}\prod_{r \in \lambda}(q+\cont(r)) \text{ for }k\geq 3 \text{ odd} \end{cases}$$

where 
\begin{itemize}
\item the sum is over unordered partitions $\lambda$ of $n$,
\item $H_\lambda$ is the product of hook lengths of $\lambda$, 
\item $\cont(r)$ is the content of square $r$.
\end{itemize}
\end{lemma}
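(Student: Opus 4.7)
The plan is to combine Tambour's Fourier expansion of $N_{w_c}$ (Theorem~\ref{thm:fouriercoeffs}) with a classical Jucys--Murphy identity to diagonalize the inner character sum. First I would rewrite
$$G(q)=\sum_{\pi\in S_n} N_{w_c}(\pi)\, q^{\cyc(\pi)},$$
where $\cyc(\pi)$ denotes the number of cycles of $\pi$. Substituting the character expansion of $N_{w_c}$ from Theorem~\ref{thm:fouriercoeffs} and interchanging the two sums yields
$$G(q)=\sum_{\lambda\vdash n} c_\lambda \sum_{\pi\in S_n} q^{\cyc(\pi)}\chi^\lambda(\pi),$$
with coefficient $c_\lambda=(n!)^{k-1}/\chi^\lambda(1)^{k-1}$ when $k$ is even and $c_\lambda=(n!)^{k-1}/\chi^\lambda(1)^{k-2}$ when $k$ is odd.

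The core of the proof is then to evaluate the inner character sum in closed form. Let $J_i=\sum_{j<i}(j\,i)\in\mathbb{C}[S_n]$ denote the $i$th Jucys--Murphy element. A short induction on $n$ --- tracking whether right-multiplication by $(q+J_n)$ fixes $n$ (creating a new singleton cycle and hence a factor of $q$) or merges an existing cycle with $n$ via a transposition --- establishes the group-algebra identity
$$\prod_{i=1}^n(q+J_i)=\sum_{\pi\in S_n}q^{\cyc(\pi)}\pi.$$
Because $J_1,\dots,J_n$ act simultaneously diagonalizably on the Gelfand--Tsetlin basis of $\rho^\lambda$ with joint spectrum equal, as a multiset, to the contents of the boxes of $\lambda$, the left-hand side acts on $\rho^\lambda$ as the scalar $\prod_{r\in\lambda}(q+\cont(r))$ times the identity. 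Taking the character of both sides gives
$$\sum_{\pi\in S_n}q^{\cyc(\pi)}\chi^\lambda(\pi)=\chi^\lambda(1)\prod_{r\in\lambda}(q+\cont(r)).$$

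Finally I would substitute this evaluation back into the expression for $G(q)$ and use the hook-length formula $\chi^\lambda(1)=n!/H_\lambda$ to collect the powers of $n!$ and $H_\lambda$ separately in each parity. In the even case the coefficient of $\prod_{r\in\lambda}(q+\cont(r))$ becomes $H_\lambda^{k-1}\cdot (n!/H_\lambda)=n!\,H_\lambda^{k-2}$, yielding the claimed formula, and the odd case produces $(n!)^2H_\lambda^{k-3}$ by the exact same bookkeeping. The main obstacle is establishing the Jucys--Murphy identity together with its scalar action on irreducibles; everything else is elementary manipulation of hook lengths and character dimensions.
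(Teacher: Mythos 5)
Your proof is correct, and the bookkeeping at the end (collecting powers of $n!$ and $H_\lambda$ in each parity) checks out exactly. You take the same first and last steps as the paper --- both arguments begin by writing $G(q)=\sum_{\pi}N_{w_c}(\pi)q^{\#\mathrm{cycles}(\pi)}$, feed in Tambour's expansion (Theorem \ref{thm:fouriercoeffs}), and finish with the hook-length formula (Lemma \ref{lem:hooklength}) --- but the middle step is genuinely different. The paper applies the Frobenius characteristic map to $N_{w_c}$, expresses the result in the Schur basis, and then uses the principal specialization $s_\lambda(1^q)=\prod_{r\in\lambda}\frac{q+\cont(r)}{h(r)}$ (Lemma \ref{lem:schurspec}, i.e.\ Stanley's Corollary 7.21.4) to extract $q^{\#\mathrm{cycles}}$. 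You instead evaluate the character sum $\sum_{\pi}q^{\#\mathrm{cycles}(\pi)}\chi^\lambda(\pi)$ directly in the group algebra via the Jucys--Murphy factorization $\prod_{i=1}^n(q+J_i)=\sum_\pi q^{\#\mathrm{cycles}(\pi)}\pi$ and the fact that the $J_i$ act on each Gelfand--Tsetlin basis vector of $\rho^\lambda$ with eigenvalue multiset equal to the contents of $\lambda$. The two central identities are equivalent (yours is precisely the character-theoretic restatement of the Schur specialization after Murnaghan--Nakayama), so neither route is more general; yours trades the symmetric-function machinery for the Okounkov--Vershik/Jucys--Murphy spectral theory, which keeps the whole computation inside $\CC[S_n]$ and makes the appearance of contents conceptually transparent, at the cost of having to establish (or cite) the JM identity and the content spectrum rather than quoting an off-the-shelf specialization formula.
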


\begin{proof}In this proof we will use some of the combinatorial tools introduced in Appendix \ref{sec:combinatorics}.

We begin by applying the Frobenius characteristic map (equation (\ref{eq:frobeniuscharmap1})) to the class function $N_{w_c}$ which records the number of pre-images of an element under the word map $w_c$. 
$$\ch N_{w_c} = \sum_{\mu \vdash n} z^{-1}_\mu N_{w_c}(\mu) p_\mu = \frac{1}{n!}\sum_{\sigma \in S_n} N_{w_c}(\sigma)p_{\cyc(\sigma)} = \frac{1}{n!}\sum_{\sigma_1, \dots, \sigma_k \in S_n}p_{\cyc(w_c(\sigma_1, \dots, \sigma_k))}  $$
where $p_\mu$ is the power sum symmetric function associated to the partition $\mu$ and $\cyc(\sigma)$ is the partition associated to the cycle type of $\sigma$. 

However, in terms of the Schur symmetric functions (equation (\ref{eq:frobeniuscharmap2})), and after applying Theorem \ref{thm:fouriercoeffs}, we then have
$$\ch N_{w_c} = \begin{cases} \sum_{\lambda \vdash n} \frac{|S_n|^{k-1}}{\chi^\lambda(1)^{k-1}} s_\lambda \text{ for even }k \geq 2 \\
\hspace{1cm} \\
\sum_{\lambda \vdash n} \frac{|S_n|^{k-1}}{\chi^\lambda(1)^{k-2}} s_\lambda \text{ for odd }k \geq 3\end{cases}$$

Equating the two expressions for $\ch N_{w_c}$ and using Lemma \ref{lem:hooklength} so that $H_\lambda^{k-1} = \frac{|S_n|^{k-1}}{\chi^\lambda(1)^{k-1}}$, we get

\begin{equation}\label{eq:almosgen}\frac{1}{n!}\sum_{\sigma_1, \dots, \sigma_k \in S_n}p_{\cyc(w_c(\sigma_1, \dots, \sigma_k))}= \begin{cases}
\sum_{\lambda \vdash n } H_\lambda^{k-1} s_\lambda \text{ for even }k \geq 2 \\
\hspace{1cm}\\
\sum_{\lambda \vdash n } H_\lambda^{k-1} \chi^\lambda(1) s_\lambda  \text{ for odd }k \geq 3\end{cases}
\end{equation}

Now, taking $x_1 = \dots = x_q = 1$ and $0$ for the rest of the indeterminates, on the left we get
$$ \frac{1}{n!} \sum_{\sigma_1, \dots, \sigma_k \in S_n} q^{C_{w_c,n,2k}(\sigma_1, \dots, \sigma_k)}$$
Using Lemma \ref{lem:schurspec} we get  $s_\lambda(1, 1, 1, \dots, 1) = \prod_{r \in \lambda} \frac{q+ \cont(r)}{h(r)}$ where $\cont(r)$ is the content and $h(r)$ is the hook length of square $r$ which simplifies the right hand side and equation (\ref{eq:almosgen}) then becomes,
$$ \sum_{\sigma_1, \dots, \sigma_k \in S_n} q^{C_{w_c,n,2k}(\sigma_1, \dots, \sigma_k)} = \begin{cases} n!\sum_{\lambda \vdash n} H_\lambda^{k-2} \prod_{r \in \lambda}(q+\cont(r)) \text{ for even }k \geq 2 \\
\hspace{1cm}\\ (n!)^2\sum_{\lambda \vdash n } H_\lambda^{k-3} \prod_{r \in \lambda}(q+\cont(r))  \text{ for odd }k \geq 3\end{cases}$$
which is the generating function required.\end{proof}

We next state the generalization of Lemma \ref{lem:largedev} to the case of the random variables $C_{w_c, n, 2k}$.

\begin{lemma}[Large deviations] \label{lem:largedevGEN}Let $C_{w_c,n,2k}$ be as before. Then,
$\Pr(C_{w_c,n,2k} \geq t) = O\left(\frac{n}{2^t}\right)$
\end{lemma}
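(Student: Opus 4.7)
The plan is to mimic the proof of Lemma \ref{lem:largedev}, via Markov's inequality applied to the probability generating function $g(q) := G(q)/(n!)^k$ of $C_{w_c, n, 2k}$. For any $q \geq 1$, one has $\Pr(C_{w_c, n, 2k} \geq t) \leq g(q)/q^t$, so it suffices to prove $g(2) = O(n)$, after which setting $q = 2$ yields the desired bound.

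From Lemma \ref{lem:genfcnGEN}, the generating function splits into cases by the parity of $k$, but in both cases the structure is $\sum_{\lambda \vdash n} H_\lambda^{\alpha} \prod_{r \in \lambda}(q + \cont(r))$ with $\alpha = k-2$ (even $k$) or $\alpha = k-3$ (odd $k$). Evaluating at $q = 2$, a square of content $-2$ annihilates the product; since a diagram contains such a square exactly when it has three or more rows, only the two-row partitions $\lambda = (n-j, j)$ with $0 \leq j \leq \lfloor n/2 \rfloor$ survive, which is the same surviving family as in the $k=2$ case.

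For $\lambda = (n-j,j)$ the next step is to compute $\prod_{r \in \lambda}(2 + \cont(r)) = (n-j+1)! \, j!$ directly from the row contents $0, 1, \ldots, n-j-1$ and $-1, 0, \ldots, j-2$. Using the hook length formula $H_\lambda = n!/\chi^\lambda(1)$ together with the classical dimension $\chi^{(n-j,j)}(1) = \binom{n}{j} - \binom{n}{j-1}$, one gets $H_{(n-j,j)} = (n-j+1)!\,j!/(n - 2j + 1)$. Substituting and simplifying, $g(2)$ collapses to
$$ g(2) \;=\; \frac{1}{n!}\sum_{j=0}^{\lfloor n/2\rfloor}\frac{(n-j+1)!\,j!}{(\chi^{(n-j,j)}(1))^{\beta}}, $$
where $\beta = k-2$ in the even case and $\beta = k-3$ in the odd case (so $\beta \geq 0$ throughout).

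The remaining task is to bound this sum uniformly in $k$. The $j=0$ term is exactly $n+1$. For $j \geq 1$, since $\chi^{(n-j,j)}(1) \geq 1$ for every $k \geq 2$, each term is at most $(n-j+1)!\,j!/n! = (n-j+1)/\binom{n}{j} \leq 1$, exactly as in the square-tiled case. Summing yields $g(2) \leq (n+1) + \lfloor n/2 \rfloor = O(n)$, and hence $\Pr(C_{w_c, n, 2k} \geq t) \leq g(2)/2^t = O(n/2^t)$. The main potential obstacle --- the extra hook-length factors present for $k \geq 3$ --- is in fact benign: they contribute a power of $1/\chi^\lambda(1) \leq 1$, so the bound only gets tighter. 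The only real bookkeeping is carrying the even/odd case split of Lemma \ref{lem:genfcnGEN} through the calculation, but both cases reduce to the same sum estimate.
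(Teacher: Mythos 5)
Your proposal is correct and follows essentially the same route as the paper: Markov's inequality on the probability generating function at $q=2$, observing that only the two-row partitions $(n-j,j)$ survive, and absorbing the extra hook-length factors via $H_\lambda/n! = 1/\chi^\lambda(1) \leq 1$ so that the sum reduces to the same $O(n)$ bound as in the square-tiled case. The only difference is cosmetic: you write the extra factor as $1/(\chi^{(n-j,j)}(1))^{\beta}$ while the paper phrases it as $(H_\lambda/n!)^{\beta} \leq 1$, which is the same estimate.
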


The proof follows the proof of Lemma \ref{lem:largedev}. We start with the generating function obtained from Lemma \ref{lem:genfcnGEN} which is first converted into a probability generating function and evaluated at $q = 2$ to give an upper bound for $\Pr(C_{w_c,n, 2k} \geq t)$. Next, note that for any partition $\lambda \vdash n$, we have $\frac{H_\lambda}{n!} = \frac{n!}{\chi^\lambda(1) n!} \leq 1$. This then implies that $\Pr(C_{w_c,n,2k} \geq t) \leq \frac{1}{2^tn!} \sum_{j=0}^{\floor{n/2}} \prod_{r \in (n-j, j)}(2 + \cont(r))$ which is exactly the intermediate bound in the proof of Lemma \ref{lem:largedev}.

Similarly, we can generalize Lemma \ref{lem:distconv} and obtain the following lemma for the general case of polygon-tiled surfaces.
\begin{lemma}\label{lem:distconvGEN} Let $U_{A_n}$ and $P_{w_c, n, 2k}$ be as previously defined. Then,
$$||P_{w_c, n, 2k} - U_{A_n}|| = \begin{cases} O(n^{-k+1}) \text{ when }k\geq 2\text{ even}\\ \hspace{1cm}\\  O(n^{-k+2})  \text{ when }k\geq 3\text{ odd} \end{cases}$$
\end{lemma}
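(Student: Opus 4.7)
The plan is to mimic the proof of Lemma~\ref{lem:distconv} step by step, swapping in Tambour's formula (Theorem~\ref{thm:fouriercoeffs}) for the Frobenius formula used in equation~(\ref{eqn:probspecialform}). Since the image of $w_c: \prod^k S_n \to S_n$ still lies in $A_n$, the Diaconis--Shahshahani/Chmutov--Pittel upper bound (Lemma~\ref{lem:DSCPupperbound}) continues to apply verbatim to $\|P_{w_c, n, 2k} - U_{A_n}\|^2$, reducing the problem to controlling $\dim \rho^\lambda \cdot \Tr[\hat{P}_{w_c, n, 2k}(\rho^\lambda) \hat{P}_{w_c, n, 2k}(\rho^\lambda)^*]$ summed over nontrivial $\lambda \vdash n$.

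Next I would compute $\hat{P}_{w_c, n, 2k}(\rho^\lambda)$ explicitly. Dividing the character expansion in Theorem~\ref{thm:fouriercoeffs} by $(n!)^k$ gives the class-function form $P_{w_c, n, 2k}(\pi) = \tfrac{1}{n!} \sum_{\chi \in \Irr(S_n)} \chi(\pi)/\chi(1)^{k-1}$ when $k$ is even and $\tfrac{1}{n!} \sum_{\chi \in \Irr(S_n)} \chi(\pi)/\chi(1)^{k-2}$ when $k$ is odd. Running the orthogonality computation from Lemma~\ref{lem:distconv} (applying Lemma~\ref{lem:DSfourier} and then collapsing the inner sum via $\sum_{\sigma} \chi^\omega(\sigma) \chi^\lambda(\sigma) = n!\,\delta_{\omega,\lambda}$) then yields
$$\hat{P}_{w_c, n, 2k}(\rho^\lambda) = (\dim \rho^\lambda)^{-k}\, I_{\dim \rho^\lambda} \;\;(\text{$k$ even}), \qquad \hat{P}_{w_c, n, 2k}(\rho^\lambda) = (\dim \rho^\lambda)^{-(k-1)}\, I_{\dim \rho^\lambda} \;\;(\text{$k$ odd}).$$
This is the only step where the two parities visibly diverge, and the divergence is entirely traceable to which exponent appears in Tambour's formula.

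Since these matrices are real and diagonal, $\hat{P} = \hat{P}^*$, and substitution into the upper bound gives
$$\|P_{w_c, n, 2k} - U_{A_n}\|^2 \leq \tfrac{1}{4} \sum_{\lambda \neq (n),\,(1^n)} (\dim \rho^\lambda)^{-s}$$
with $s = 2k-2$ when $k$ is even and $s = 2k-4$ when $k$ is odd. Because $\dim \rho^\lambda = \chi^\lambda(1)$ and the two excluded representations are exactly the ones with $\dim \rho^\lambda = 1$, the Liebeck--Shalev estimate (Lemma~\ref{lem:liebeckshalev}) applies to the remaining sum and bounds it by $O(n^{-s})$. Taking square roots produces $O(n^{-(k-1)})$ for even $k$ and $O(n^{-(k-2)})$ for odd $k$, matching the claim.

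There isn't a substantive obstacle; the argument is essentially a line-by-line generalization of Lemma~\ref{lem:distconv}. The only care required is in the parity bookkeeping of exponents from Theorem~\ref{thm:fouriercoeffs} and in verifying that the exponent $s$ fed to Liebeck--Shalev stays strictly positive in the smallest admissible cases ($k = 2$ and $k = 3$ both give $s = 2$), which it does.
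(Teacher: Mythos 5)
Your proposal is correct and follows exactly the route the paper takes: it sketches this lemma by substituting Tambour's formula (with exponent $\epsilon = k-1$ for even $k$ and $\epsilon = k-2$ for odd $k$) into the argument of Lemma~\ref{lem:distconv} and rerunning the Diaconis--Shahshahani/Chmutov--Pittel bound, the orthogonality computation, and the Liebeck--Shalev estimate. Your exponent bookkeeping ($\hat{P}(\rho^\lambda) = (\dim\rho^\lambda)^{-(\epsilon+1)}I$, hence $s = 2\epsilon$ in the Witten zeta sum) checks out and in fact supplies more detail than the paper's own two-sentence sketch.
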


To obtain this lemma from the proof of Lemma \ref{lem:distconv}, we use the general formula obtained by Tambour in Theorem \ref{thm:fouriercoeffs} instead of Frobenius's formula for the number of pre-images under the commutator map. In particular, under Tambour's formula,  equation (\ref{eqn:probspecialform}) generalizes to 
\begin{equation}P_{w_c, n, 2k}(\pi) =  \frac{1}{n!}\sum_{\chi \in \Irr(S_n)} \frac{\chi(\pi)}{(\chi(1))^\epsilon}
\end{equation}
where $\epsilon = k-1$ when $k$ is even and $\epsilon = k-2$ when $k$ is odd. The rest of the proof follows closely the one in Lemma \ref{lem:distconv} with this modification.

\subsection{Moments of vertex count in PTSs}
We now state the generalization of Theorem \ref{thm:moments} to PTSs. 
\begin{theorem}[Moments of vertex count]\label{thm:momentsGEN}

Let $p(x)$ be a polynomial of degree $\ell$ that is non-negative and non-decreasing for $x \geq 0$. Then,
$$\EE[p(C_{w_c,n,2k})] = \EE[p(C_{\eta, n})] + \begin{cases} O\left(\frac{\log^\ell(n)}{n^{k-1}}\right)\text{ for even }k\geq 2 \\\hspace{1cm}\\ O\left(\frac{\log^\ell(n)}{n^{k-2}}\right) \text{ for odd }k\geq 3 \end{cases}$$ 
$$ \EE[p(C_{w_i, n, 2k})] = \EE[p(C_{\pi, n})] \text{ for }i=a,b$$

\end{theorem}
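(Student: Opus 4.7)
The plan is to follow the proof of Theorem \ref{thm:moments}, substituting the generalized lemmas now available from Section \ref{sec:PTS}. The second assertion is essentially immediate: by Proposition \ref{prop:w1w2equiv}, the words $w_a$ and $w_b$ are $\Aut(F_k)$-equivalent to the single-letter word $x_1$, so the induced measures $P_{w_a,n,2k}$ and $P_{w_b,n,2k}$ on $S_n$ are exactly the uniform measure. Therefore $C_{w_a,n,2k}$ and $C_{w_b,n,2k}$ have the same distribution as $C_{\pi,n}$, and the equality holds with no error.

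For the first assertion, I would rewrite each expectation as a sum of tail probabilities via Abel summation,
$$\EE[p(C_{w_c,n,2k})] = p(0) + \sum_{s=1}^{n}(p(s)-p(s-1))\Pr(C_{w_c,n,2k} \geq s),$$
and similarly for $\EE[p(C_{\eta,n})]$. Subtracting and splitting the resulting sum at a threshold $t$ gives the bound
$$|\EE[p(C_{w_c,n,2k})] - \EE[p(C_{\eta,n})]| \leq \|P_{w_c,n,2k} - U_{A_n}\|\cdot p(t) + p(n)\bigl(\Pr(C_{w_c,n,2k} \geq t) + \Pr(C_{\eta,n} \geq t)\bigr),$$
exactly as in Theorem \ref{thm:moments}.

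The first term is controlled by Lemma \ref{lem:distconvGEN}, which gives $O(p(t)/n^{k-1})$ in the even case and $O(p(t)/n^{k-2})$ in the odd case. The second term is controlled by Lemma \ref{lem:largedevGEN}, giving $O(p(n)\cdot n/2^t)$, and the third by Lemma \ref{lem:largedevunif}, which gives the same bound. Using $p(t) = O(t^\ell)$ and $p(n) = O(n^\ell)$ and choosing $t = \lceil (\ell+k)\log(n)/\log(2)\rceil$ forces the exponentially-decaying tail contributions to be $O(1/n^{k-1})$ or smaller, so the total variation term dominates. This yields exactly the stated error rates.

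The main obstacle is not so much the argument itself (which is structurally identical to the square-tiled case), but rather ensuring that all the ingredients, especially Lemmas \ref{lem:genfcnGEN}, \ref{lem:largedevGEN}, and \ref{lem:distconvGEN}, have been stated with the correct parity-dependent exponents. Once those are in place, the tail-splitting strategy runs through verbatim and the choice of $t$ is unconstrained enough that the dominant term is always the total variation contribution from Lemma \ref{lem:distconvGEN}, which is precisely what produces the $n^{-(k-1)}$ versus $n^{-(k-2)}$ dichotomy in the theorem.
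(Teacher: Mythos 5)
Your proposal is correct and follows essentially the same route as the paper, which simply states that the proof is analogous to Theorem \ref{thm:moments} with Lemmas \ref{lem:distconvGEN} and \ref{lem:largedevGEN} substituted for Lemmas \ref{lem:distconv} and \ref{lem:largedev}; your treatment of the $w_a, w_b$ case via Proposition \ref{prop:w1w2equiv} is also the intended one. Your choice of threshold $t = \lceil (\ell+k)\log(n)/\log(2)\rceil$ correctly makes the exponential tail terms subordinate to the total variation term, so the argument goes through as you describe.
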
 
The proof is analogous to the proof of the square-tiled case, but uses Lemmas \ref{lem:distconvGEN} and \ref{lem:largedevGEN} instead of Lemmas \ref{lem:distconv} and \ref{lem:largedev}.

The statistics of the genus random variable $G_{n, 2k}$ directly follows from Theorem \ref{thm:momentsGEN}. 
\begin{corollary}\label{cor:expectedGEN}
The expected value and variance of the genus random variable $G_{n, 2k}$ are given as follows:
$$\begin{array}{c|c|c}
 & \text{Even } k\geq 2 & \text{Odd }k \geq 3\\[8pt]
\EE[G_{n,2k}] & \frac{(k-1)n}{2} - \frac{\log(n)}{2} - \frac{\gamma}{2} + 1 + o(1) & \frac{(k-1)n}{2} - \log(n) - \gamma  + 1 + o(1) \\[8pt]
\Var[G_{n,2k}] &\frac{\log(n)}{4}+ \frac{\gamma}{4} - \frac{\pi^2}{24} + o(1) & \frac{\log(n)}{2} + \frac{\gamma}{2} - \frac{\pi^2}{12} + o(1)\\[10pt]
\end{array}$$
\end{corollary}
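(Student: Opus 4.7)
The plan is to derive Corollary \ref{cor:expectedGEN} as a fairly direct consequence of Theorem \ref{thm:momentsGEN} together with Proposition \ref{prop:expAnSn}, handling the even and odd cases separately since $G_{n,2k}$ is defined differently in each. For the expected values, we apply Theorem \ref{thm:momentsGEN} with $p(x)=x$ to obtain $\EE[C_{w_c,n,2k}] = \EE[C_{\eta,n}] + O(\log n / n^{k-1})$ in the even case and $\EE[C_{w_i,n,2k}] = \EE[C_{\pi,n}]$ (exactly) for $i=a,b$ in the odd case. Substituting $\EE[C_{\eta,n}] = \EE[C_{\pi,n}] = \log n + \gamma + o(1)$ from Proposition \ref{prop:expAnSn} into the definition of $G_{n,2k}$ yields the claimed asymptotic means, with the error terms absorbed into $o(1)$.

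For the variance in the even case, we use $\Var[G_{n,2k}] = \tfrac{1}{4}\Var[C_{w_c,n,2k}]$ and apply Theorem \ref{thm:momentsGEN} with both $p(x)=x$ and $p(x)=x^2$. Tracking the error terms through $\Var[X] = \EE[X^2] - \EE[X]^2$ and noting that $\EE[C_{\eta,n}] = O(\log n)$, all residual contributions are $O(\log^2 n / n^{k-1}) = o(1)$, so $\Var[C_{w_c,n,2k}] = \Var[C_{\eta,n}] + o(1)$. Another application of Proposition \ref{prop:expAnSn} then completes the even case.

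The odd case reduces to $\Var[G_{n,2k}] = \tfrac{1}{4}\Var[C_{w_a,n,2k}+C_{w_b,n,2k}]$, and since $w_a$ and $w_b$ are equivalent under $\Aut(F_k)$ (Proposition \ref{prop:w1w2equiv}) they share the marginal distribution $C_{\pi,n}$, so each contributes $\Var[C_{\pi,n}] = \log n + \gamma - \pi^2/6 + o(1)$. Expanding, we have $\Var[C_{w_a}+C_{w_b}] = 2\Var[C_{\pi,n}] + 2\operatorname{Cov}[C_{w_a,n,2k}, C_{w_b,n,2k}] + o(1)$, and matching the target variance therefore requires showing that this covariance is $o(1)$.

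I expect the covariance estimate to be the main obstacle, since Theorem \ref{thm:momentsGEN} only supplies information about the marginal distributions of $C_{w_a}$ and $C_{w_b}$. The most natural route is to extend the Fourier-theoretic proof of Lemma \ref{lem:distconvGEN} to the joint word map $(w_a, w_b): \prod^k S_n \to S_n \times S_n$, proving that the induced measure converges in total variation to the uniform measure on $S_n \times S_n$; this would immediately yield asymptotic independence of $C_{w_a}$ and $C_{w_b}$ and hence vanishing covariance. Alternatively, one could compute $\EE[C_{w_a} C_{w_b}]$ by a character-theoretic argument parallel to the proof of Lemma \ref{lem:genfcnGEN}, evaluating a joint generating function of the form $\sum q^{C_{w_a}} r^{C_{w_b}}$ via the Frobenius characteristic map and extracting its leading asymptotic behavior using Lemma \ref{lem:liebeckshalev}.
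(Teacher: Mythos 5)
Your derivation of both expectations and of the even-$k$ variance is correct and is exactly what the paper intends: the paper's entire proof is the single sentence that the corollary ``directly follows from Theorem \ref{thm:momentsGEN},'' so your careful tracking of the error terms through $p(x)=x$ and $p(x)=x^2$ is if anything more complete than the original. The substantive point is the odd-$k$ variance, and here you have correctly isolated a gap that the paper itself does not close: Theorem \ref{thm:momentsGEN} controls only the marginal laws of $C_{w_a,n,2k}$ and $C_{w_b,n,2k}$, while the stated constant $\frac{\gamma}{2}-\frac{\pi^2}{12}$ requires $\operatorname{Cov}[C_{w_a,n,2k},C_{w_b,n,2k}]=o(1)$. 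The paper concedes two sentences later that these variables are not independent (this is why it only conjectures the odd-$k$ local CLT), yet still records the variance as if the covariance vanished; so your flag is legitimate and the corollary as stated is not fully proved by the paper's own argument.

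One caution about your proposed repairs: the first route, proving total-variation convergence of the joint law of $(w_a(\vec{\sigma}),w_b(\vec{\sigma}))$ to the uniform measure on $S_n\times S_n$, cannot succeed as stated. Since $\operatorname{sgn}\bigl(w_a(\sigma_1,\dots,\sigma_k)\bigr)=\prod_i\operatorname{sgn}(\sigma_i)=\operatorname{sgn}\bigl(w_b(\sigma_1,\dots,\sigma_k)\bigr)$, the joint law is supported on the index-two subgroup $H=\{(\pi,\rho):\operatorname{sgn}\pi=\operatorname{sgn}\rho\}$, so its total-variation distance to the uniform measure on $S_n\times S_n$ is at least $1/2$ for every $n$; equivalently, $C_{w_a}$ and $C_{w_b}$ always have the same parity and are never asymptotically independent in the full sense. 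The correct target is the uniform measure on $H$; under that measure $\pi$ and $\rho$ are conditionally independent given the common sign, and since $\EE[C_{\pi,n}\mid\operatorname{sgn}\pi=+1]-\EE[C_{\pi,n}\mid\operatorname{sgn}\pi=-1]=O(n^{-2})$ (from $\EE[C_{\pi,n}(-1)^{C_{\pi,n}}]=\tfrac{1}{n(n-1)}$), the covariance under the limiting measure is $O(n^{-4})$. Combining such a convergence statement with the large-deviation bounds of Lemma \ref{lem:largedevGEN} (to truncate the unbounded function $xy$) would give $\operatorname{Cov}=o(1)$ and complete the proof; your second route, a joint generating function $\sum q^{C_{w_a}}r^{C_{w_b}}$ computed by characters in the spirit of Lemma \ref{lem:genfcnGEN}, is the other viable path. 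Either way, this is a genuine piece of work that neither your proposal nor the paper actually carries out.
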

Similarly, the local central limit theorem from $G_{n, 2k}$ for even $k$ also follows from previous observations of Chmutov-Pittel as explained for the square-tiled case. When $k$ is odd, $G_{n, 2k}$ is given by a linear combination of random variables $C_{w_a, n, 2k}$ and $C_{w_b, n, 2k}$ which are not independent. Hence, the local central limit theorem for the odd $k$ case doesn't immediately follow, but we still conjecture that $G_{n, 2k}$ is asymptotically normally distributed for $k$ odd as well.

\subsection{Number of cone points} Let $s_{n,2k}: \prod^k S_n \rightarrow \NN$ be the random variable counting the number of cone points of a random $S \in \PTS_{n,2k}$. As a Corollary to Theorem \ref{thm:momentsGEN}, we also get the expected value of $s_{n,2k}$.

\begin{corollary} The expected number of cone points on a random polygon-tiled surface is given by,
$$\begin{array}{c|c|c|c|c}
 & k=2 & \text{Even } k\geq 4 & k=3 & \text{Odd }k \geq 5\\[8pt]
\EE[s_{n,2k}] & \log(n) + \gamma -1 + o(1) &  \log(n) + \gamma + o(1) &  2\log(n) + 2\gamma -2 + o(1) &2\log(n) + 2\gamma + o(1) 
\end{array}$$
\end{corollary}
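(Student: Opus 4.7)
The plan is to reduce the count of cone points to quantities already controlled by Theorem \ref{thm:momentsGEN} (for the even $k$ case) and by Proposition \ref{prop:w1w2equiv} (for the odd $k$ case), then split into cases depending on whether $2\pi$ is an admissible vertex angle. By Proposition \ref{prop:TopFromCommutatorGEN}, every vertex of $S(\sigma_1,\dots,\sigma_k) \in \PTS_{n,2k}$ has angle a positive integer multiple of $2\pi(k-1)$ when $k$ is even, and a positive integer multiple of $\pi(k-1)$ when $k$ is odd. A vertex fails to be a cone point precisely when its angle equals $2\pi$, which happens only for $k=2$ (from a fixed point of $w_c$) and for $k=3$ (from a fixed point of $w_a$ or $w_b$). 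For all other $k$ the minimum possible vertex angle strictly exceeds $2\pi$, so every vertex counts as a cone point.

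For even $k \geq 4$, I would simply write $s_{n,2k} = C_{w_c, n, 2k}$ and apply Theorem \ref{thm:momentsGEN} with $p(x)=x$ to obtain
$$\EE[s_{n,2k}] = \EE[C_{\eta,n}] + O\!\left(\tfrac{\log n}{n^{k-1}}\right) = \log(n) + \gamma + o(1),$$
using Proposition \ref{prop:expAnSn} for the leading asymptotics of $\EE[C_{\eta,n}]$. The $k=2$ case is exactly Corollary \ref{cor:conepoints}, where the adjustment $-1$ comes from subtracting off Nica's value $\EE[F_{w_c,n}] = 1 + 1/(n-1)$ for the expected number of fixed points of the commutator.

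For odd $k \geq 5$, Proposition \ref{prop:vertexgluingGEN} gives $s_{n,2k} = C_{w_a, n, 2k} + C_{w_b, n, 2k}$. By Proposition \ref{prop:w1w2equiv}, both word maps $w_a, w_b \colon \prod^k S_n \to S_n$ induce the uniform measure on $S_n$, so $C_{w_a, n, 2k}$ and $C_{w_b, n, 2k}$ are each distributed as $C_{\pi, n}$. Therefore, by linearity of expectation and Proposition \ref{prop:expAnSn},
$$\EE[s_{n,2k}] = 2\,\EE[C_{\pi,n}] = 2\log(n) + 2\gamma + o(1).$$
For $k=3$, the same identification of $P_{w_a}$ and $P_{w_b}$ with the uniform measure is used, but now I must subtract the expected number of fixed points of $w_a$ and $w_b$ to remove vertices of angle $2\pi$. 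Since a uniform permutation on $S_n$ has expected number of fixed points equal to $1$ exactly, this gives
$$\EE[s_{n,6}] = 2\,\EE[C_{\pi,n}] - 2 = 2\log(n) + 2\gamma - 2 + o(1).$$

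The calculation is mostly bookkeeping rather than hard analysis; the only thing that requires care is the case split according to when $2\pi$ arises as a legitimate vertex angle, since only then do we need to exclude non-cone points from the cycle count. I would present the four cases in a single short argument, invoking Theorem \ref{thm:momentsGEN} (even $k$) and Proposition \ref{prop:w1w2equiv} (odd $k$) as the two main inputs, with Proposition \ref{prop:expAnSn} providing the common asymptotic $\log(n)+\gamma + o(1)$ throughout.
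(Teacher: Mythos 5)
Your proposal is correct and follows essentially the same route as the paper: identify exactly when a vertex of angle $2\pi$ can occur (only $k=2$ via fixed points of $w_c$ and $k=3$ via fixed points of $w_a,w_b$), reduce to Corollary \ref{cor:conepoints} for $k=2$, to Theorem \ref{thm:momentsGEN} for even $k\geq 4$, and to the uniformity of $P_{w_a},P_{w_b}$ (Proposition \ref{prop:w1w2equiv}) together with the expected fixed-point count $1$ for the odd cases. The only cosmetic difference is that you phrase the case split via the vertex-angle formula of Proposition \ref{prop:TopFromCommutatorGEN} while the paper phrases it via whether the branch points of the base $2k$-gon surface are themselves cone points; these are the same observation.
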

\begin{proof}
Recall that every $S \in \PTS_{n,2k}$ branch covers $S' \in \PTS_{1, 2k}$ where $S'$ is obtained by taking a regular $2k$-gon (oriented in the plane so that one side is vertical) and identifying its opposite sides. There are two branch points (when $k$ is odd) and one branch point (when $k$ is even). 

When $k \geq 4$, the branch points are cone points of $S'$, and hence all their pre-images (i.e. ramification points) on $S$ are cone points as well. Moreover, any cone point on $S$ has to project to a cone point on $S'$. Hence, the random variables $C_{w_c,n,2k}$ (for even $k\geq 4$) and $C_{w_a,n,2k}+C_{w_b,n,2k}$ (for odd $k\geq5$) exactly count the number of cone points on $S$ and subsequently Theorem \ref{thm:moments} implies,
$$ \EE[s_{n,2k}] = \begin{cases}\EE[C_{w_c,n,2k}] = \log(n) + \gamma + o(1) \text{ for even } k \geq 4\\
\EE[C_{w_a,n,2k}] + \EE[C_{w_b,n,2k}] = 2\log(n) + 2\gamma + o(1) \text{ for odd } k \geq 5
\end{cases}$$

 The case $k = 2$ is handled in Corollary \ref{cor:conepoints}. When $k = 3$, $S'$ is the hexagonal torus which does not have cone points. Hence, the pre-images of the branch points may include non-cone points as well. By Proposition \ref{prop:TopFromCommutator}, the number of ramification points on a hexagon-tiled surface $S(\sigma_1, \sigma_2, \sigma_3)$ that are not cone points is exactly the number of fixed points of $w_a(\sigma_1,\sigma_2,\sigma_3)$ together with the fixed points of $w_b(\sigma_1, \sigma_2, \sigma_3)$.
 
So, define $F_{w_i, n, 2k}: \prod^kS_n \rightarrow \ZZ_{\geq 0}$ to count the number of fixed points of $w_i(\sigma_1, \dots, \sigma_k)$ for $i = a, b$. Using Proposition \ref{prop:derangementAn}, for a hexagon-tiled surface,
$$ \EE (s_{n,6}) = \EE(C_{w_a,n,6} + C_{w_b,n,6} - F_{w_a, n, 6} - F_{w_b, n, 6}) = 2\EE(C_{\pi, n}) - 2\EE(F_\pi) = 2 \log(n) + 2\gamma -2 + o(1)$$\end{proof}

\newpage
\bibliographystyle{abbrv}
\bibliography{references}
\newpage
\appendix
\section{Tools from Combinatorics}\label{sec:combinatorics}
Since our randomizing model is combinatorial, the overall strategy is to convert geometric questions into combinatorial ones about the model, and use combinatorial tools to answer them. Hence, in this section we provide a brief exposition on the relevant combinatorial notions and state the relevant combinatorial lemmas. Majority of the material in this section can be found in the textbook by Stanley \cite{Stan1, Stan2}.

\subsection{Partitions and the hook-length formula} A \textbf{partition} $n \in \NN$ is a sequence $\lambda=(\lambda_1, \dots, \lambda_k) \in \NN^k$ such that $\sum \lambda_i = n$ and $\lambda_1 \geq \lambda_2 \geq \dots \geq \lambda_k$. Two partitions are identical if they only differ in the number of zeros. For example $(3, 2, 1, 1) = (3, 2, 1, 1, 0, 0)$. Informally a partition can be thought of as a way of writing $n$ as a sum $\lambda_1 + \dots + \lambda_k$ disregarding the order the $\lambda_i$ (since there is a canonical way of writing such a sum as a partition). If $\lambda$ is a partition of $n$, we write $\lambda \vdash n$. The non-zero $\lambda_i$ are called parts of $\lambda$ and we say that $\lambda$ has $k$ parts if $k = \#\{i: \lambda_i > 0\}$. If $\lambda$ has $\xi_i$ parts equal to $i$, then we can write $\lambda = \ideal{1^{\xi_1}, 2^{\xi_2}, \dots}$ where terms with $\xi_i = 0$ and the superscript $\xi_i = 1$ is omitted. Denote by $\Par(n)$, the set of all partitions of $n$ and set $\Par = \bigcup_{n \in \NN} \Par(n)$.

%

For a partition $(\lambda_1, \dots, \lambda_k) \vdash n$, we can draw a left-justified array of boxes with $\lambda_i$ boxes in the $i$-th row. This is called the \textbf{Young diagram} associated to partition $(\lambda_1, \dots, \lambda_k)$. The squares in a Young diagram can be identified using tuples $(i,j)$ where $i$ is the row corresponding to part $\lambda_i$ and $1 \leq j\leq \lambda_i$ is the position of the square along that row. Given a square $r = (i,j) \in \lambda$, define the \textbf{hook length of $\lambda$ at $r$} as the number of squares directly to the right or directly below $r$, counting $r$ itself once. The \textbf{hook length product of $\lambda$}, denoted $H_\lambda$ is the product, 
$$H_\lambda = \prod_{u \in \lambda} h(u)$$
Likewise, define the \textbf{content} $\cont(r)$ of $\lambda$ at $r = (i, j)$ by $\cont(r) = j-i$. In general we obtain a \textbf{Young tableau} by filling in the boxes of the Young diagram with entries from a totally ordered set (usually a set of positive integers). It is called a \textbf{semistandard Young tableau} (SSYT) if the entries of the tableau weakly increase along each row and strictly increase down each column. The type of an SSYT is a sequence $\xi = (\xi_1, \dots )$ where the SSYT contains $\xi_1 1's$, $\xi_2 2's$ and so on. See Figure \ref{fig:youngdiagram} for examples of Young diagram and Young tableau.

\begin{figure}[h!!]
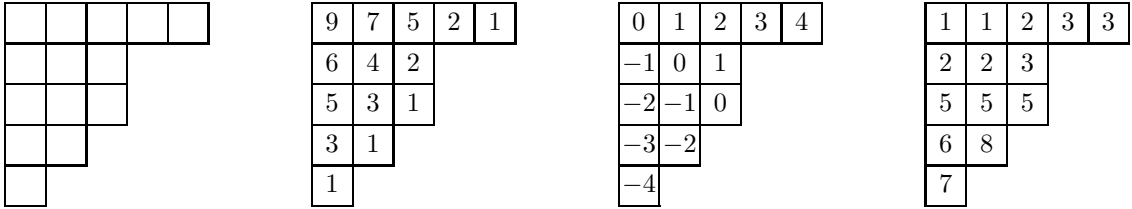

\begin{minipage}{0.24\linewidth}
\centering
$\begin{ytableau} {}&&&& \cr {}&& \cr {}&& \cr {}& \cr {}
\end{ytableau}$
\end{minipage}
\begin{minipage}{0.24\linewidth}
\centering
$\begin{ytableau} {9}&{7}&{5}&{2}&{1}\cr {6}&{4}&{2} \cr {5}&{3}&{1} \cr {3}&{1} \cr {1}
\end{ytableau}$
\end{minipage}
\begin{minipage}{0.24\linewidth}
\centering
$\begin{ytableau} {0}&{1}&{2}&{3}&{4} \cr {-1}&{0}&{1} \cr {-2}&{-1}&{0} \cr {-3}&{-2} \cr {-4}
\end{ytableau}$
\end{minipage}
\begin{minipage}{0.24\linewidth}
\centering
$\begin{ytableau} {1}&{1}&{2}&{3}&{3} \cr {2}&{2}&{3} \cr {5}&{5}&{5} \cr {6}&{8} \cr {7}
\end{ytableau}$
\end{minipage}
\caption{Young diagram and tableau for the partition (5,3,3,2,1). From left to right: Young diagram;  Tableau with hook lengths; Tableau with contents; Example of an SSYT}
\label{fig:youngdiagram}
\end{figure}

Partitions are particularly relevant to us, since the cycle type of a permutation $\pi \in S_n$ is a partition of $n$, and two permutations are conjugate if and only if they have the same cycle type. Hence, partitions index conjugacy classes of $S_n$, and we will denote $\cyc(\pi)$ to be the partition obtained from the cycle type of $\pi$. Moreover, partitions of $n$ also index the irreducible representations and irreducible characters of $S_n$ canonically. We will denote by $\rho^\lambda$ and $\chi^\lambda$ the irreducible representation and character associated indexed by the partition $\lambda$. One instance of the deep relation between partitions and the representation theory of the symmetric group is the Hook-length formula which gives a relation between the degree (or dimension)  of a representation and the hook length product of the partition that is used to index it:

\begin{lemma}[Hook-length formula]\label{lem:hooklength}
For a partition $\lambda \vdash n$, and associated representation $\rho^\lambda \in \hat{S}_n$, the degree is given by $$ \dim(\rho^\lambda) = \frac{n!}{H_\lambda}$$
\end{lemma}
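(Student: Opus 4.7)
The plan is a two-step reduction, both steps classical. First I would reduce the claim to the purely combinatorial identity $f^\lambda = n!/H_\lambda$, where $f^\lambda$ denotes the number of Standard Young Tableaux (SYT) of shape $\lambda$ (i.e.\ SSYT of type $(1,1,\dots,1)$). The identification $\dim(\rho^\lambda) = f^\lambda$ is Young's explicit construction of the Specht modules: $S^\lambda$ is cut out of $\CC[S_n]$ by a Young symmetrizer and admits a canonical basis indexed by SYT of shape $\lambda$. Alternatively this drops out of the Frobenius characteristic map already used elsewhere in the paper: since $\mathrm{ch}(\chi^\lambda) = s_\lambda$, the value $\chi^\lambda(e) = \dim(\rho^\lambda)$ equals $n!$ times the coefficient of $p_1^n/n!$ in $s_\lambda$, which in turn equals $f^\lambda$ because the monomial expansion of $s_\lambda$ sums over SSYT. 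Either way, this step is a citation to Sagan or Stanley.

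For $f^\lambda = n!/H_\lambda$, my plan is to use the probabilistic argument of Greene--Nijenhuis--Wilf. The recursion
$$ f^\lambda \;=\; \sum_{c \text{ corner of } \lambda} f^{\lambda\setminus c}$$
is immediate, since the cell labelled $n$ in any SYT is always a corner and erasing it yields an SYT of shape $\lambda\setminus c$. Setting $p(\lambda) := n!/H_\lambda$, induction on $n$ reduces $f^\lambda = p(\lambda)$ to verifying the identity
$$ \sum_{c \text{ corner of }\lambda} \frac{p(\lambda\setminus c)}{p(\lambda)} \;=\; \sum_c \frac{H_\lambda}{n\cdot H_{\lambda\setminus c}} \;=\; 1. $$
The key idea is to realize each summand as the probability that a random \emph{hook walk} on $\lambda$ terminates at corner $c$: start from a uniformly chosen cell, and at each step jump uniformly to another cell in the current cell's hook (its arm and leg), continuing until a corner is reached. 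Since the probabilities sum to $1$ tautologically, only the endpoint identity requires work.

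The main obstacle is exactly this endpoint probability calculation. Greene--Nijenhuis--Wilf handle it by expanding the probability that the walk starting at $(a,b)$ ends at corner $(r,s)$ as a sum over pairs $(A,B)$, where $A$ is the set of rows strictly between $a$ and $r$ visited by the walk and $B$ is the analogous set of columns; the resulting sum factors as a product over $A$ and $B$, telescopes via the identity $1 + 1/(h(u)-1) = h(u)/(h(u)-1)$, and collapses to $H_\lambda/(n \cdot H_{\lambda\setminus (r,s)})$. Step~1 is pure citation and the SYT recursion is trivial; the whole non-trivial content lies in this telescoping argument, whose merit is that it converts the algebraically opaque ratio $H_\lambda/(n H_{\lambda\setminus c})$ into a quantity that is manifestly a probability and therefore automatically sums to $1$ over corners.
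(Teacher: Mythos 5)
Your proposal is correct, but there is nothing in the paper to compare it against: Lemma~\ref{lem:hooklength} is stated in the appendix as classical background (the appendix points to Stanley's textbooks) and no proof is given there. Your two-step route is a standard and valid one. The reduction $\dim(\rho^\lambda)=f^\lambda$ is sound either via the Specht-module basis of standard polytabloids or, as you note, via the Frobenius characteristic: since $\mathrm{ch}(\chi^\lambda)=s_\lambda$ and the squarefree monomial $x_1\cdots x_n$ appears only in $p_{(1^n)}$ among the power sums, extracting its coefficient gives $f^\lambda=\chi^\lambda(e)$. The Greene--Nijenhuis--Wilf hook-walk argument then correctly closes the induction: the recursion $f^\lambda=\sum_c f^{\lambda\setminus c}$ over corners is immediate, and the only substantive point is the identity $\sum_c H_\lambda/(n\,H_{\lambda\setminus c})=1$, which the hook walk establishes by exhibiting each summand as a terminal probability; your description of the expansion over visited row and column sets and the telescoping via $1+1/(h(u)-1)=h(u)/(h(u)-1)$ matches the published argument. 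The one thing I would flag is proportion: for the purposes of this paper a citation (as the authors give) suffices, and if you do include a proof, the GNW endpoint computation is the entire content and should be written out in full rather than summarized, since the rest of your outline is trivial or pure citation.
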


\subsection{Symmetric functions and the Murnaghan-Nakayama Rule}

Let $x = (x_1, x_2,\dots)$ be a set of indeterminates, and let $n \in \NN$. 

Denote the set of all homogeneous symmetric functions of degree $n$ over $\QQ$ as $\Lambda^n$. Note that $\Lambda^n$ is a $\QQ$-vector space with many different standard bases, two of which we will utilize. The first of which are called \textbf{power sum symmetric functions}, denoted $p_\lambda$ and indexed by partitions $\lambda$. They are defined as, 
\begin{align*}
&p_n := \sum_{i} x_i^n, \hspace{1cm} n \geq 1 \hspace{1cm} \text{ (with }p_0 = 1)\\
&p_\lambda := p_{\lambda_1}p_{\lambda_2}\dots \hspace{1cm} \text{ if }\lambda = (\lambda_1, \lambda_2, \dots, )
\end{align*}
The second set of symmetric functions we will need are called \textbf{Schur functions}, denoted $s_\lambda$ and also indexed by partitions $\lambda$. They are defined as the formal power series,
$$ s_\lambda(x) = \sum_{T} x^T$$
where
\begin{itemize}
\item the sum is over all SSYTs $T$ of shape $\lambda$. 
\item $x^T = x_1^{\xi_1(T)} x_2^{\xi_2(T)} \dots$ if $T$ is an SSYT of type $\xi$.
\end{itemize}
While it is not hard to see that power sum symmetric functions are indeed symmetric functions, it is a non-trivial theorem that Schur functions are indeed symmetric functions. There is a deep connection between Schur functions, power sum symmetric functions and irreducible characters of $S_n$. This relation is called the Murnaghan-Nakayama rule:

\begin{theorem}[Murnaghan-Nakayama Rule] For $\lambda \vdash n$,
$$ s_\lambda = \sum_{\nu \vdash n} z^{-1}_\nu \chi^\lambda (\nu)p_\nu$$
where $\chi^\lambda$ is the irreducible character of $S_n$ indexed by $\lambda$ and $z^{-1}_\nu = \frac{\#\{\sigma \in S_n: \cyc(\sigma) = \nu\}}{n!}$
\end{theorem}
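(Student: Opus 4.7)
The statement is the classical \emph{Frobenius character formula}, equivalent to the assertion that the Frobenius characteristic map carries irreducible characters of $S_n$ to Schur functions. My plan is to introduce the \emph{Frobenius characteristic map} $\ch\colon \mathrm{Cl}(S_n)\to \Lambda^n$ from class functions on $S_n$ to degree-$n$ symmetric functions, defined by $\ch(f)=\sum_{\mu\vdash n} z_\mu^{-1} f(\mu)\, p_\mu$. Under this definition the theorem reads $\ch(\chi^\lambda)=s_\lambda$, and this is what I will establish.

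The first step is to verify that $\ch$ is an isometry between the standard inner product on class functions (under which $\langle f,g\rangle_{S_n}=\sum_\mu z_\mu^{-1}f(\mu)g(\mu)$) and the Hall inner product on $\Lambda^n$ (defined by $\langle p_\lambda,p_\mu\rangle = z_\lambda\delta_{\lambda\mu}$); this is immediate from the definitions. Combined with Schur orthogonality of irreducible characters, this shows that $\{\ch(\chi^\lambda)\}_{\lambda\vdash n}$ is an orthonormal basis of $\Lambda^n$. I will also verify that $\{s_\lambda\}_{\lambda\vdash n}$ is an orthonormal basis of $\Lambda^n$, either directly from the Cauchy identity
$$\prod_{i,j}(1-x_iy_j)^{-1} \;=\; \sum_\lambda s_\lambda(x)s_\lambda(y) \;=\; \sum_\mu z_\mu^{-1} p_\mu(x) p_\mu(y)$$
or via the Jacobi--Trudi determinant formula.

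To match these two orthonormal bases, I will introduce the Young subgroup $S_\lambda = S_{\lambda_1}\times\cdots\times S_{\lambda_k}$ and the induced permutation character $\eta^\lambda = \mathrm{Ind}_{S_\lambda}^{S_n}\mathbf{1}$. A direct unwinding of the induced character formula gives $\ch(\eta^\lambda) = h_\lambda$, the complete homogeneous symmetric function indexed by $\lambda$. On the representation-theoretic side, Young's rule expresses $\eta^\mu = \chi^\mu + \sum_{\lambda\rhd\mu} K_{\lambda\mu}\chi^\lambda$, where $K_{\lambda\mu}$ are the Kostka numbers and $\rhd$ is dominance order. On the symmetric-function side, the combinatorial definition of Schur functions as generating series over SSYTs gives $h_\mu = s_\mu + \sum_{\lambda\rhd\mu} K_{\lambda\mu}s_\lambda$ with the \emph{same} Kostka coefficients. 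Since both $\{s_\lambda\}$ and $\{\ch(\chi^\lambda)\}$ are orthonormal bases of $\Lambda^n$ related to $\{h_\lambda\}$ by the same unitriangular transition matrix, they must coincide, proving $\ch(\chi^\lambda) = s_\lambda$ and hence the theorem.

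The main obstacle is verifying that the transition matrices from $\{h_\mu\}$ to $\{s_\lambda\}$ and from $\{\eta^\mu\}$ to $\{\chi^\lambda\}$ are governed by the \emph{same} Kostka numbers. The representation-theoretic half is the substantive input, typically established via the Specht module construction together with a combinatorial bijection on $\mu$-tabloids. The companion symmetric-function identity and the computation $\ch(\eta^\lambda)=h_\lambda$ are then routine, as is the orthonormality of the $s_\lambda$ once the Cauchy identity is in hand.
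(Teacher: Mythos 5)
The paper does not prove this statement at all: it appears in Appendix~\ref{sec:combinatorics} as quoted background, attributed implicitly to Stanley's textbook, and nothing in the paper's arguments depends on how it is proved. So there is no in-paper proof to match against; what I can say is that your outline is a correct rendition of the standard argument. The chain you describe --- $\ch$ is an isometry for the Hall inner product, $\ch(\chi^\lambda)$ and $s_\lambda$ are each orthonormal, $\ch\bigl(\mathrm{Ind}_{S_\mu}^{S_n}\mathbf{1}\bigr)=h_\mu$, and the two unitriangular Kostka expansions $\eta^\mu=\chi^\mu+\sum_{\lambda\rhd\mu}K_{\lambda\mu}\chi^\lambda$ and $h_\mu=s_\mu+\sum_{\lambda\rhd\mu}K_{\lambda\mu}s_\lambda$ force $\ch(\chi^\lambda)=s_\lambda$ --- is sound; indeed the final step needs only invertibility of the Kostka matrix, so the orthonormality of the two families serves more as a consistency check than as a load-bearing ingredient. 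You correctly identify Young's rule as the one substantive input; to avoid circularity it must be established module-theoretically (Specht modules and tabloids), not via the characteristic map itself, and your sketch is explicit about this. The only caveat is that, as written, this is a proof plan rather than a proof: Young's rule and the identity $h_\mu=\sum_\lambda K_{\lambda\mu}s_\lambda$ are named but not derived, which is reasonable for a classical appendix lemma but should be flagged as deferred to the references.
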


Conversely, the power sum symmetric functions can be transformed into the Schur functions as follows:

\begin{theorem}\label{thm:powersumtoschur} The power sum symmetric functions can be expressed as a linear combination of the Schur functions in the following way:
$$ p_\mu = \sum_{\lambda \vdash n} \chi^\lambda(\mu) s_\lambda$$
\end{theorem}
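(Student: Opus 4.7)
The plan is to invert the Murnaghan--Nakayama rule using the orthogonality relations for the irreducible characters of $S_n$. The Murnaghan--Nakayama rule, stated just above in the excerpt, expresses the change of basis from $\{p_\nu\}_{\nu \vdash n}$ to $\{s_\lambda\}_{\lambda \vdash n}$ in $\Lambda^n$ with matrix entries $z_\nu^{-1}\chi^\lambda(\nu)$. Since both are bases of the finite-dimensional $\QQ$-vector space $\Lambda^n$, this transformation is invertible, and Theorem~\ref{thm:powersumtoschur} amounts to identifying the entries of the inverse matrix as $\chi^\lambda(\mu)$.

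The key computation is direct. Substituting the Murnaghan--Nakayama formula into the conjectured expansion yields
\[
\sum_{\lambda \vdash n} \chi^\lambda(\mu)\, s_\lambda
= \sum_{\lambda \vdash n} \chi^\lambda(\mu) \sum_{\nu \vdash n} z_\nu^{-1} \chi^\lambda(\nu)\, p_\nu
= \sum_{\nu \vdash n} z_\nu^{-1} p_\nu \sum_{\lambda \vdash n} \chi^\lambda(\mu)\, \chi^\lambda(\nu).
\]
The inner sum is the column-orthogonality relation of the character table of $S_n$, which evaluates to $z_\mu \delta_{\mu\nu}$. Plugging this in collapses the outer sum to the single term $p_\mu$, as desired.

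Thus the only nontrivial ingredients are (i) the Murnaghan--Nakayama rule itself and (ii) the orthogonality of irreducible characters of $S_n$, both of which are standard and freely available here. I do not anticipate any genuine obstacle: the entire argument is a one-line application of column orthogonality once the two sums are interchanged. An alternative route, worth mentioning for context but not needed, is to invoke the Frobenius characteristic map $\ch$, which is an isometry from the space of class functions on $S_n$ to $\Lambda^n$ sending $\chi^\lambda \mapsto s_\lambda$ and the indicator class function of the conjugacy class of cycle type $\mu$ (suitably normalized) to $z_\mu^{-1} p_\mu$; expanding the indicator class function in the character basis then recovers the same formula. Either approach gives the theorem immediately.
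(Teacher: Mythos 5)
Your argument is correct. The paper itself states Theorem~\ref{thm:powersumtoschur} without proof, as standard background (it is the inverse of the Murnaghan--Nakayama expansion, cf.\ Stanley), so there is no in-paper proof to compare against; your derivation is the standard one. The computation is complete as written: substituting $s_\lambda = \sum_{\nu \vdash n} z_\nu^{-1}\chi^\lambda(\nu) p_\nu$ into $\sum_{\lambda \vdash n}\chi^\lambda(\mu)s_\lambda$, interchanging the finite sums, and applying column orthogonality $\sum_{\lambda \vdash n}\chi^\lambda(\mu)\chi^\lambda(\nu) = z_\mu\delta_{\mu\nu}$ (valid without complex conjugation since the characters of $S_n$ are integer-valued, and $z_\mu$ is indeed the centralizer order, consistent with the paper's normalization $z_\nu^{-1} = \#\{\sigma : \cyc(\sigma)=\nu\}/n!$) collapses the sum to $p_\mu$. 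Note that your preliminary remark about invertibility of the change-of-basis matrix is not actually needed: the direct verification that the right-hand side equals $p_\mu$ already constitutes the proof.
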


Next, let $\CF^n$ be the set of class functions $f: S_n \rightarrow \QQ$. Then, there exists a natural inner product on $\CF^n$ given by,
$$ \ideal{f, g} = \frac{1}{n!}\sum_{\sigma \in S_n} f(\sigma)g(\sigma) = \sum_{\lambda\vdash n} z^{-1}_\lambda f(\lambda)g(\lambda)$$
where $f(\lambda)$ denotes the value of $f$ on the conjugacy class associated to partition $\lambda$. To each class function $f \in \CF^n$, one can associate a symmetric function of degree $n$ via the linear transformation $\ch: \CF^n \rightarrow \Lambda^n$, called the \textbf{Frobenius characteristic map} given by,

\begin{equation}\label{eq:frobeniuscharmap1} \ch f = \frac{1}{n!}\sum_{\sigma \in S_n} f(\sigma)p_{\cyc(\sigma)} = \sum_{\lambda \vdash n} z^{-1}_\lambda f(\lambda) p_\lambda.
\end{equation} Using Theorem \ref{thm:powersumtoschur}, the characteristic function is expressed as,
\begin{equation}\label{eq:frobeniuscharmap2} \ch f = \sum_{\lambda \vdash n} \ideal{f, \chi^\lambda}s_\lambda
\end{equation}

Finally, we state a particular specialization of $s_\lambda$ which we will use. This is Corollary 7.21.4 of \cite{Stan2}

\begin{lemma}\label{lem:schurspec} For any $\lambda \in \Par$ and $m$ a positive integer, we have

$$s_\lambda(1^m) = \prod_{r \in \lambda} \frac{m+\cont(r)}{h(r)}$$ 
where $s_\lambda(1^m)$ means evaluating $s_\lambda$ by setting $x_1 = \dots x_m = 1$ and $x_i = 0$ for all $i > m$.
\end{lemma}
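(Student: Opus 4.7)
The plan is to prove this principal specialization (the hook--content identity) by passing through the $q$-analogue, starting from the bialternant formula for Schur functions and taking $q\to 1$ at the end. Recall the bialternant (Weyl) formula
$$s_\lambda(x_1, \ldots, x_m) = \frac{a_{\lambda+\delta}(x)}{a_\delta(x)}, \qquad a_\mu(x) = \det\bigl(x_i^{\mu_j}\bigr)_{1 \le i,j \le m},$$
where $\delta = (m-1, m-2, \ldots, 0)$. Naively substituting $x_i = 1$ produces $0/0$, so I would first deform to the $q$-principal specialization $x_i = q^{i-1}$ and exploit that $s_\lambda$ is a polynomial in the $x_i$, letting us recover $s_\lambda(1^m)$ by continuity as $q \to 1$.

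Under this substitution $a_\mu(1, q, \ldots, q^{m-1}) = \det\bigl((q^{\mu_j})^{i-1}\bigr)_{i,j}$ is a Vandermonde determinant in the variables $q^{\mu_j}$, which evaluates in closed form as $\prod_{i<j}(q^{\mu_j} - q^{\mu_i})$. Applying this to both $\mu = \lambda+\delta$ and $\mu = \delta$, factoring out powers of $q$ from each term $q^{\mu_j}(1 - q^{\mu_i - \mu_j})$, and simplifying the prefactor, one obtains
$$s_\lambda(1, q, \ldots, q^{m-1}) = q^{e(\lambda)} \prod_{1 \le i < j \le m} \frac{1 - q^{\lambda_i - \lambda_j + j - i}}{1 - q^{j-i}},$$
with $e(\lambda) := \sum_{j}(j-1)\lambda_j$ collecting the residual $q$-weight (here $\lambda_j := 0$ for $j > \ell(\lambda)$).

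The main step is then the $q$-hook--content identity: the rational function above equals
$$q^{e(\lambda)}\prod_{r\in\lambda}\frac{1 - q^{m+\cont(r)}}{1 - q^{h(r)}}.$$
The combinatorial heart of this is the row-by-row claim that for each $i$ the multisets $\{\lambda_i - \lambda_j + j - i : i < j \le m\}$ and $\{h(i,c) : 1 \le c \le \lambda_i\}$ are disjoint and together exhaust $\{1, 2, \ldots, \lambda_i + m - i\}$, so that denominator factors coming from ``$j$-direction'' comparisons cancel against numerator factors, leaving exactly a hook contribution $(1-q^{h(i,c)})$ in the denominator for each cell $(i,c)$. The numerator of the right-hand side is simultaneously accounted for by the telescoping of the parts $\lambda_i + m - i$ into the column-by-column contents $m + \cont(r)$. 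This cell-indexing bookkeeping, pitting arm-plus-leg counts against column indices, is the main obstacle where care is needed; I would carry it out row-by-row, following the standard argument found in Stanley's Enumerative Combinatorics II.

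Finally, I would conclude by letting $q \to 1$. Since $s_\lambda$ is a polynomial, the left-hand side tends to $s_\lambda(1^m)$. On the right, $q^{e(\lambda)} \to 1$ and each factor $\frac{1-q^a}{1-q^b}$ tends to $a/b$, yielding the desired formula $\prod_{r \in \lambda} \frac{m + \cont(r)}{h(r)}$. A useful consistency check is that taking $m \to \infty$ in the formula recovers the hook-length formula $\chi^\lambda(1) = n!/H_\lambda$ up to normalization, matching Lemma \ref{lem:hooklength}.
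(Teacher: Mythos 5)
Your proof is correct. Note that the paper does not actually prove this lemma; it is quoted as Corollary 7.21.4 of Stanley's \emph{Enumerative Combinatorics II}, so the only thing to compare against is the cited source, and your argument is essentially the standard one given there: specialize the bialternant formula at $x_i=q^{i-1}$, evaluate both determinants as Vandermonde products, establish the $q$-hook--content identity, and let $q\to 1$. The row-by-row bookkeeping you flag as the main obstacle goes through exactly as you describe: writing $\mu=\lambda+\delta$, for each fixed $i$ the $m-i$ numbers $\mu_i-\mu_j$ ($j>i$) and the $\lambda_i$ hook lengths of row $i$ are disjoint and together fill $\{1,\dots,\mu_i\}$, so
$$\prod_{j>i}\frac{1-q^{\mu_i-\mu_j}}{1-q^{j-i}}=\frac{\prod_{t=1}^{\mu_i}(1-q^t)}{\prod_{t=1}^{m-i}(1-q^t)\,\prod_{c=1}^{\lambda_i}\bigl(1-q^{h(i,c)}\bigr)}=\prod_{c=1}^{\lambda_i}\frac{1-q^{m+\cont((i,c))}}{1-q^{h(i,c)}},$$
since $m-i+c=m+\cont((i,c))$; multiplying over $i$ gives the $q$-identity and the limit $q\to1$ finishes. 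One half-line of completeness is worth adding: the bialternant formula in $m$ variables presupposes that $\lambda$ has at most $m$ parts (your convention $\lambda_j:=0$ for $j>\ell(\lambda)$ implicitly assumes this). When $\lambda$ has more than $m$ parts, $s_\lambda(1^m)=0$ because no SSYT of shape $\lambda$ has entries in $\{1,\dots,m\}$, and the right-hand side is $0$ because the cell $(m+1,1)$ has $m+\cont=0$; so the lemma still holds, but this degenerate case does occur in the paper's application (Lemma \ref{lem:genfcnGEN} specializes at $1^q$ for every $\lambda\vdash n$ at once) and should be dispatched explicitly.
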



\end{document}